\numberwithin{equation}{section}
\newtheorem{Thm}{Theorem}[section]
\newtheorem{Def}{Definition}[section]
\newtheorem{Lm}{Lemma}[section]
\newtheorem{Prop}[Lm]{Proposition}
\newtheorem{Rk}{Remark}[section]
\newtheorem{Cor}[Lm]{Corollary}
\def\bdef{\begin{Def}}
\def\endef{\end{Def}}
\def\bthm{\begin{Thm}}
\def\ethm{\end{Thm}}
\def\bprop{\begin{Prop}}
\def\enprop{\end{Prop}}
\def\blm{\begin{Lm}}
\def\elm{\end{Lm}}
\def\bcor{\begin{Cor}}
\def\ecor{\end{Cor}}
\def\brm{\begin{Rem}}
\def\erm{\end{Rem}}
\def\bfig{\begin{picture}}
\def\efig{\end{picture}}
\def\be{\begin{eqnarray}}
\def\ee{\end{eqnarray}}
\def\beal{\begin{aligned}}
\def\enal{\end{aligned}}
\def\beq{\begin{equation}}
\def\eeq{\end{equation}}
\def\Z{\mathbb Z}
\def\C{\mathbb C}
\def\N{\mathbb N}
\def\R{\mathbb R}
\def\eps{\varepsilon}
\def\al{\alpha}
\def\bt{\beta}
\def\dt{\delta}
\def\tt{\theta}
\def\Sm{\Sigma}
\def\sm{\sigma}
\def\om{\omega}
\def\pt{\partial}
\title{Continuous averaging proof of the Nekhoroshev theorem}
\author{Jinxin Xue\\ University of Maryland-College Park}
\thanks{Email: jxue@math.umd.edu }
\begin{document}
\maketitle
\begin{abstract}
In this paper we develop the \textit{continuous averaging} method of Treschev to work on the simultaneous Diophantine approximation and apply the result to give a new proof of the Nekhoroshev theorem. We obtain a sharp normal form theorem and an explicit estimate of the stability constants appearing in the Nekhoroshev theorem.
\end{abstract}
\section{Introduction}In the papers \cite{Tr1,Tr2}, Treschev developed a new averaging method called continuous averaging. It is a powerful tool to derive sharp constants in the exponentially small splitting problems in Hamiltonian systems with one and a half degrees of freedom. But the technicality becomes very heavy when we use the method to study Hamiltonian systems of more degrees of freedom. For this reason, the method has not been applied to other problems yet.

In this paper, we use the continuous averaging to give a new proof of the Nekhoroshev theorem.
We consider the following analytic nearly integrable Hamiltonian system:
 \begin{equation}H(I,\tt,x,y)=H_0(I)+\eps H_1(I,\tt,x,y),\label{eq: main}\end{equation}
The phase space is \[(I,\tt,x,y)\in \mathcal{D}:=\mathcal{G}^n\times(\R/2\pi\Z)^n\times \mathcal{W}^{2m}\subset \R^n\times(\R/2\pi\Z)^n\times \R^{2m},\quad n\geq 2,\ m\geq 0.\]
We complexify the variables and extend the domain of $(I,x,y)$ to a $\sm$ neighborhood and that of $\tt$ to a $\rho$ neighborhood of the original domains respectively.
The extended phase space to the complex domain is \[\mathcal{D}(\rho,\sm):=(\mathcal{G}^n+\sm)\times\left((\R/2\pi\Z)^n+\rho\right)\times (\mathcal{W}^{2m}+\sm)\subset\C^n\times(\C/2\pi\Z)^n\times \C^{2m},\]
 where $\rho$ is the width of analyticity in $\tt$ and $\sm$ is that of the slow variables $I,x,y$.

As stated in \cite{Ne,L1,L2,LN,LNN,Po,BM}, Nekhoroshev Theorem ensures that when the unperturbed Hamiltonian $H_0$ is quasi-convex, by which we mean that the set $\{I|\ H_0(I) \leq E\}$ is strictly convex, the following general estimate holds for sufficiently small $\eps$:\begin{equation}\Vert I(t)-I(0)\Vert\leq C_0\eps^{b},\quad \textrm{when}\quad |t|\leq \mathcal{T}=C_1e^{C_2/\eps^a}.\label{eq: ccc}\end{equation} for some constants $a,b,C_0,C_1,C_2>0$ independent of $\epsilon$,
where $I(t)$ is the action variable component of any orbit associated to Hamiltonian~(\ref{eq: main}) with initial condition in the set $\mathcal{D}$.

There are many works studying the stability exponents $a$ and $b$ (c.f.\cite{LN,Po,BM}). Their approaches are based on a careful study of the geometric and number theoretical aspects of resonances. Instead, in this paper we try to sharpen the estimates in the analytic part of the proof using continuous averaging to obtain an improved normal form (see Theorem~\ref{Thm: main}). Then we apply the normal form to Lochak's argument to get the Nekhoroshev theorem (see Theorem~\ref{Thm: global}) where all the stability constants are estimated explicitly. In this paper, we only work on the case $a=b=1/2n$. But the normal form theorem can be easily applied to other prescribed $a$ and $b$ to get the corresponding $C_2$.

The method of Lochak is called the simultaneous Diophantine approximation, which turns out to be an important alternative to the classical approach via small divisor techniques, as explained in \cite{L2}. Its main idea is to do the averaging in a vicinity of a periodic orbit. So it is essentially an averaging procedure for systems with one fast angle. In general, we can kill the dependence on the fast angle up to exponential smallness. This makes the simultaneous Diophantine approximation suitable to prove the Nekhoroshev theorem.
The work \cite{PT} can be considered as a development of the continuous averaging to the small divisor case. In this paper, it is the first time that the continuous averaging has been developed to the simultaneous Diophantine approximation.

We point out the relation between continuous averaging and some important PDEs. The idea of the continuous averaging is to study the averaging procedure using PDE instead of iterations. The PDE has the form $H_\dt=\{H,F\}$, where $F$ is the Hilbert transform of $H$ in some special cases (see Section 3 for more details). This type of equation has been studied (c.f. \cite{CCF}) as a simplified model for quasi-geostrophic equation (c.f. \cite{KNV}), incompressible Euler equation, etc. It would be interesting if we could apply some PDE techniques to our problem.

To state our theorems, we need the following definitions.
\begin{Def}\begin{enumerate}
\item We use $|\cdot|,\ |\cdot|_2,\ |\cdot|_\infty$ to denote the $l_{1},\ l_2,\ l_\infty$ norms for a vector in $\R^n$ or $\Z^n$. Without causing confusion, we also use $|\cdot|$ to denote the absolute value of a function whose range is in $\R$ or $\C$.
\item For a function $f(I,\tt,x,y)$, the $weighted\ Fourier\ norm$ is defined as:\[\Vert f\Vert_{\rho'}=\sup_{I,x,y}\sum_{k\in\Z^n}|f^k|e^{|k|\rho'},\ \rho'\leq\rho,\]
if we have the Fourier expansion $f(I,\tt,x,y)=\sum_{k\in\Z^n}f^k(I,x,y) e^{i\langle k,\tt\rangle}$, 
and the other variables $(I,x,y)\in (\mathcal{G}^n+\sm)\times (\mathcal{W}^{2m}+\sm)$.
\item $\max\{\Vert H_1\Vert_\rho,\Vert\nabla H_1\Vert_\rho\}:=\mu$.
\end{enumerate}\label{def: norm}
\end{Def}
We also use the following definition to characterize the convexity of the unperturbed part $H_0(I)$.
\begin{Def}
Consider a Hamiltonian $H_0(I)$ defined on $G^n+\sigma$. Then, we define the associated constants $M_\pm>0$ to characterize the convexity of $H_0(I)$.
\begin{equation}\begin{aligned}
&0<M_- |v|_2^2\leq |\langle\nabla^2 H_0(I) v,v\rangle|,\quad v\in\R^n\setminus\{0\},\quad I\in \mathcal{G}^n.\\
&|\nabla^2 H_0(I) v |_2\leq M_+| v|_2,\quad v\in \C^n\setminus\{0\},\quad I\in \mathcal{G}^n+\sigma. 
\end{aligned}\label{eq: ineq}
\end{equation}
\label{def: convex}
\end{Def}
Now we state a simplified version of our main theorem. The complete version is stated in the next section. 
\begin{Thm}\label{Thm: simple}
Consider a Hamiltonian system~$(\ref{eq: main})$ satisfying inequalities~(\ref{eq: ineq}) in Definition~\ref{def: convex} and $n\geq 2, m\geq 0$. For every orbit $(I,\tt,x,y)(t)$ with initial condition $(I,\tt,x,y)(0)\in \mathcal{D}(\rho,\sigma)$ and $(x(t),y(t))\in \mathcal{W}^{2m}+\sm$,
we have the following estimates provided $\eps$ is small enough.
\[| I(t)-I(0)|_2\leq \dfrac{8\sqrt{n -1}M_{+}}{M_-^2}\eps^{1/2n}|\nabla H_0|_\infty,\]
for\[|t|\leq \dfrac{1}{|\nabla H_0|_\infty}\exp\left(\left(\dfrac{M_-}{M_+}\right)^{2}\dfrac{\rho_1}{8\sqrt{n-1}\eps^{1/2n}}\right),\]
where $\rho=\rho_1+2\rho_2+\rho_3$ satisfies
\begin{itemize}
\item $\dfrac{6\mu}{\rho_2^n} \leq \dfrac{M_+^{2}}{M_-^{3}} |\nabla H_0|_\infty^2$,
\item $\dfrac{3e^{\sm}\rho_3}{n!|\nabla H_0|_\infty}\left(\dfrac{\rho_1M_-^2}{\rho_3 4\sqrt{n-1}M_+}\right)^{n+1}<\dfrac{4\sm}{25}$.\\
\end{itemize}
The norm $|\cdot|_\infty$ is taken over $I\in\mathcal{G}^n$.
\end{Thm}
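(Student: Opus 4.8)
The plan is to deduce the estimate from the resonant normal form of Theorem~\ref{Thm: main} by Lochak's simultaneous Diophantine approximation, using the quasi-convexity inequalities~(\ref{eq: ineq}) to confine the action and a continuation argument to reach exponentially long times. The exponent $1/2n$ will enter through a single parameter $Q$, of order $\tfrac{M_-}{M_+}\eps^{-1/2n}$ with the precise numerical factor $8\sqrt{n-1}$ dictated by the optimisation below, measuring the quality of a Diophantine approximation of $\om_0:=\nabla H_0(I(0))$; the two displayed conditions on $\rho_2$ and $\rho_3$ contain no $\eps$ and will turn out to be exactly what the normal-form construction and the confinement demand.

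First I would fix a good periodic torus: by Dirichlet's theorem on simultaneous approximation there is a $T$-periodic frequency $\om_*=\tfrac{2\pi}{T}k$, $k\in\Z^n$, with $T\le Q^n$ and $|\om_0-\om_*|_2\lesssim Q^{-1}|\nabla H_0|_\infty$. Then I would apply Theorem~\ref{Thm: main} to average $H$ over the $\om_*$-periodic flow of $H_0$ in a ball around $I(0)$ of radius of order $\rho_2$; the first displayed condition, $6\mu/\rho_2^n\le M_+^2M_-^{-3}|\nabla H_0|_\infty^2$, is precisely the smallness that makes the normalising transformation well defined, $O(\eps\mu)$-close to the identity, and able to keep such a ball inside $\mathcal G^n+\sm$. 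This produces symplectic coordinates $\tilde I$ with $|\tilde I-I|\lesssim\eps\mu$ in which $H=H_0(\tilde I)+\eps Z(\tilde I,\tt)+R$, where $Z$ depends on $\tt$ only through $\{\langle\ell,\tt\rangle:\langle\ell,k\rangle=0\}$ and $\|R\|\lesssim\eps\mu\exp\big(-(\tfrac{M_-}{M_+})^2\tfrac{\rho_1}{8\sqrt{n-1}\,\eps^{1/2n}}\big)$ (the variables $x,y$ are carried along and kept in the domain by the hypothesis $(x(t),y(t))\in\mathcal W^{2m}+\sm$); the second displayed condition, with $\sm$ on its right-hand side, is what makes this exponential bound survive the continuous-averaging iteration, since it bounds the excursion of $\tilde I$ inside the complex strip of width $\sm$.

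The confinement then comes from quasi-convexity. Since $\dot{\tilde I}=-\eps\,\pt_\tt Z-\pt_\tt R$ and $\pt_\tt Z\in\mathrm{span}\{\ell:\langle\ell,k\rangle=0\}=\om_*^{\perp}$, write $\tilde I(t)-\tilde I(0)=w(t)+r(t)$ with $w(t)\perp\om_*$ and $|r(t)|_2\lesssim|t|\,\|R\|$. The truncated energy $H_0(\tilde I)+\eps Z$ changes by $O(|\nabla H_0|_\infty|t|\,\|R\|)$, so $H_0(\tilde I(0)+w(t))=H_0(\tilde I(0))+O(\eps\mu)+O(|\nabla H_0|_\infty|t|\,\|R\|)$, which for $|t|\le\mathcal T$ is $H_0(\tilde I(0))+O(\eps\mu)$. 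Expanding $H_0$ at $\tilde I(0)$ and using $w\perp\om_*$, so that the linear term is $\langle\om_0,w\rangle=\langle\om_0-\om_*,w\rangle$, Definition~\ref{def: convex} gives
\[
\tfrac{M_-}{2}\,|w(t)|_2^2\ \le\ |\langle\om_0-\om_*,w(t)\rangle|+O(\eps\mu)\ \lesssim\ Q^{-1}|\nabla H_0|_\infty\,|w(t)|_2+O(\eps\mu),
\]
so $|w(t)|_2\lesssim Q^{-1}M_-^{-1}|\nabla H_0|_\infty+\sqrt{\eps\mu/M_-}$ for all $|t|\le\mathcal T$, the first term dominating because $1/2>1/2n$. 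A bootstrap — the set of $t\in[-\mathcal T,\mathcal T]$ on which $\tilde I(t)$ stays in the averaging ball is open, closed and nonempty, the bound just obtained being below the ball's radius by the first displayed condition — makes the normal form valid throughout; undoing the change of variables, $|I(t)-I(0)|_2\le|w(t)|_2+|r(t)|_2+O(\eps\mu)$ with the last two terms subdominant, and collecting the constants from Dirichlet's theorem, the convexity bounds and the normal form into $8\sqrt{n-1}$ gives the asserted estimate for $|t|\le\mathcal T$, $\mathcal T$ being as in the statement (the largest time with $|\nabla H_0|_\infty\,\mathcal T\,\|R\|=O(\eps\mu)$).

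I expect the weight to fall on two points. The first is Theorem~\ref{Thm: main} itself: the continuous-averaging construction of $Z$ and $R$ must be quantitative enough that the exponent is exactly $(\tfrac{M_-}{M_+})^2\tfrac{\rho_1}{8\sqrt{n-1}\eps^{1/2n}}$ and the analyticity losses are accounted for precisely by $\rho=\rho_1+2\rho_2+\rho_3$ — this is the technical heart of the paper. The second is the balancing in the application: one must pick $Q$ (equivalently the period cutoff $T\le Q^n\sim\eps^{-1/2}$) so that the Dirichlet displacement $\sim Q^{-1}|\nabla H_0|_\infty/M_-$, the perpendicular confinement $\sim\sqrt{\eps\mu/M_-}$ and the parallel drift $\sim\mathcal T\|R\|$ all stay below $\tfrac{8\sqrt{n-1}M_+}{M_-^2}\eps^{1/2n}|\nabla H_0|_\infty$, and so that the averaging ball of that radius around $I(0)$ is free of resonances stronger than $\om_*$; this resonance bookkeeping, together with carrying every constant through, is what forces the two displayed conditions and requires the bootstrap to be set up with enough room that re-centring at a nearby periodic torus, should the orbit ever approach the boundary of the averaging ball before time $\mathcal T$, would not degrade the constants.
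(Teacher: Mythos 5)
Your route is the paper's own: approximate $\nabla H_0(I(0))$ by a periodic frequency via simultaneous Diophantine approximation, invoke the normal form of Theorem~\ref{Thm: main} near that periodic torus, and confine the action by energy conservation plus quasi-convexity, splitting the drift into the part transverse to $\om^*$ (controlled by convexity) and the part along $\om^*$ (controlled by the exponentially small nonresonant remainder, using $\langle\om^*,\partial\bar{\Psi}/\partial\tt\rangle=0$), closed by a continuity argument; this is exactly Sections 6--7 of the paper.

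There are, however, concrete gaps between your sketch and the statement with its precise constants. First, the Dirichlet step as you state it (approximate all $n$ components, period $T\le Q^n$, error $\lesssim Q^{-1}$) produces $\sqrt{n}$-type constants; the factors $\sqrt{n-1}$ in the estimate, in the exponent, and in the second displayed condition come only from the refined approximation the paper uses (rescale by $|\om(I)|_\infty$ so one component is exactly $\pm1$ and approximate the remaining $n-1$ components, giving period $\bar T=q/|\om(I)|_\infty$ with $q<Q$ and error $\le 1/(\bar TQ^{1/(n-1)})$, then set $Q^{1/(n-1)}=\eps^{-1/2n}$). Second, Theorem~\ref{Thm: main} is set up after translating so that $\nabla H_0(0)=\om^*$ exactly; this makes $\nabla G(0)=0$, hence $|\nabla G|_2\le M_+\mathcal R$, which is what fixes the exponent $\rho_1/(M_+\mathcal R\bar T)$. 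A frequency-space approximation alone does not let you center the averaging at $I(0)$: you must invert the frequency map to find the periodic action $I^*$ with $|I(0)-I^*|_2\le r=\sqrt{n-1}/(M_-\bar TQ^{1/(n-1)})$ (Lemma~\ref{Lm: dirichlet}; this is where the hypothesis on $\nabla^3H_0$ enters, hidden here in ``$\eps$ small enough''), and the averaging ball then has radius $\mathcal R=8rM_+/M_-\sim\eps^{1/2n}$, not $\rho_2$. Third, you misattribute the two displayed conditions: the first is not about well-definedness of the transformation but is the inequality $6\eps\mu M_-/\rho_2^n\le M_+^2r^2$ that lets the $O(\eps\mu/\rho_2^n)$ energy fluctuation be absorbed in the quadratic confinement inequality of Theorem~\ref{Thm: local}, and the second is the non-blow-up condition $A(\dt^*)<4\sm^2/25$ for the majorant solution (third bullet of Theorem~\ref{Thm: main}) rewritten with $K=\rho_1M_-^2\eps^{-1/2n}/(8\sqrt{n-1}M_+\rho_3)$ and $\bar T<Q$. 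Finally, no resonance bookkeeping on the ball and no re-centring are needed: all harmonics with $\langle k,\om^*\rangle=0$ are kept in $\bar\Psi$, and the convexity trap confines the orbit near the single periodic torus for the whole exponential time, so conditioning the argument on the ball being ``free of stronger resonances'' would be both unnecessary and, near multiple resonances, impossible to arrange.
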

This theorem gives the estimate of the stability constant $C_2$ in \eqref{eq: ccc}. For a given system, we need to optimize $\rho_1$ under the constraints in the theorem. We see that the decomposition of $\rho$ can be qualitatively written as $\rho=\rho_1+c_0 \mu^{1/n}+c_1\rho_1^{1+1/n},$ where the constants $c_0=c_0(M_\pm,|\nabla H_0|_\infty)$ and $c_1=c_1(M_\pm,|\nabla H_0|_\infty,n,\sigma)$. Though not solved explicitly, we expect our estimate here improves the previous results \cite{LNN,N} since the continuous averaging method gives us an improved normal form theorem (see Theorem~\ref{Thm: main}).   

A possible application of the result is to the 3-body problem in order to get long time stabilities. This direction is already pioneered in \cite{N}. But the mass ratio of Jupiter to the sun obtained in \cite{N} is too small to be satisfactory. On the other hand, in \cite{FGKR}, the authors construct diffusing orbits for restricted planar 3-body problem. The diffusion time there is polynomial w.r.t. $1/\eps$. 

The paper is organized as follows. First we give a complete statement of the main theorem and compare it with previous results in Section~\ref{section: complete}. Then we state a normal form Theorem~\ref{Thm: main} about averaging in a vicinity of periodic orbits in Section~\ref{section: normal}. This is the main result that we obtain using continuous averaging, which improves the corresponding one in \cite{LNN,N}.  Then we give a brief introduction to the continuous averaging method in Section~\ref{section: ca}.  After that we give a proof of Theorem~\ref{Thm: main} in Section~\ref{section: main}. This section is a higher dimensional generalization of the case studied in Section~\ref{section: ca}. We try to draw analogy between the two sections. With the normal form theorem, we first show local stability result of Nekhoroshev theorem in Section~\ref{section: local}, and then global stability in Section~\ref{section: global}. Here local stability means the stability result in a neighborhood of a periodic orbit and global stability means stability for all initial conditions. Finally, we have two appendices~\ref{section: appendix} and~\ref{section: majorant}. The first one contains some technical estimates for the continuous averaging. The second one is some basics of majorant estimates.
\section{The complete statement of the main theorem and discussions}\label{section: complete}
We give a complete statement of the main theorem as follows.
\begin{Thm}\label{Thm: global}
Under the same assumption as Theorem~\ref{Thm: simple}, we have \[| I(t)-I(0)|_2\leq \dfrac{8\sqrt{n -1}M_{+}}{M_-^2}\eps^{1/2n}|\nabla H_0|_\infty,\]
\[\mathrm{for\ }|t|\leq \mathcal{T}=\dfrac{1}{|\nabla H_0|_\infty}\exp\left(\left(\dfrac{M_-}{M_+}\right)^{2}\dfrac{\rho_1}{8\sqrt{n-1}\eps^{1/2n}}\right),\]
where $\rho=\rho_1+2\rho_2+\rho_3$, provided the following restrictions are satisfied.
\begin{itemize}
\item $\dfrac{6\mu}{\rho_2^n} \leq \dfrac{M_+^{2}}{M_-^{3}} |\nabla H_0|_\infty^2$,
\item
$\eps^{1/2n}\leq \min\Big\{\dfrac{\sqrt{n-1}|\nabla H_0|^2_\infty}{5n\mu M_-}(\rho_2+\rho_3)^n,\  \dfrac{M_-^2}{4\sqrt{n-1}|\nabla^3 H_0|_\infty},$\\
$\dfrac{M_-^2}{8\sqrt{n-1}M_+}\left(\dfrac{\sm}{5(\sqrt{n}+2\sqrt{m})|\nabla H_0|_\infty },\  \dfrac{\rho_1}{2(n+2m)\rho_3},\ \dfrac{5\rho_1^2}{2\sm\rho_3}
    \right)\Big\}.$
\item $\dfrac{3e^{\sm}\rho_3}{n!|\nabla H_0|_\infty}\left(\dfrac{\rho_1M_-^2}{\rho_3 4\sqrt{n-1}M_+}\right)^{n+1}<\dfrac{4\sm}{25}$,\\
    $\eps^{1/2n}\dfrac{16n\sqrt{n-1}M_+}{\rho_1 M_-^2}\left(1+\ln\dfrac{\rho_1^2M_-^4\eps^{-1/n}}{32n(n-1)M_+^2\rho_3}\right)\leq 1.$
\end{itemize}
The norm $|\cdot|_\infty$ is taken over $I\in\mathcal{G}^n$.
\end{Thm}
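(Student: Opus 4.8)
The plan is to combine the local normal form near periodic orbits (Theorem~\ref{Thm: main}) with Lochak's simultaneous Diophantine approximation argument, in the same spirit as \cite{L1,L2,LNN,N}, but feeding in the sharper analytic estimates coming from continuous averaging. First I would invoke the local stability statement of Section~\ref{section: local} (the local Nekhoroshev estimate valid in a tubular neighborhood of a periodic torus $I = I_*$): near a $T$-periodic frequency $\omega_* = \nabla H_0(I_*)$, one averages away the fast angle to obtain an exponentially small remainder, and quasi-convexity of $H_0$ confines the drift of $I$ to a ball of radius $O(\eps^{1/2n})$ for exponentially long times. The restrictions listed in the theorem are precisely the smallness conditions under which Theorem~\ref{Thm: main} applies with the stated choice of analyticity widths $\rho = \rho_1 + 2\rho_2 + \rho_3$: $\rho_2$ absorbs the loss in the Fourier cutoff (first bullet), $\rho_3$ controls the geometric tail of the normal form series (third bullet, first line), and the $\eps^{1/2n}$-bounds in the second bullet guarantee that the scaled perturbation is small enough for the continuous-averaging PDE to be integrated up to $\delta = 1$.

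The second and main step is the covering / globalization argument. By Dirichlet's simultaneous approximation theorem, for every action $I \in \mathcal{G}^n$ there is a resonant action $I_*$ with periodic frequency of period $T \lesssim \eps^{-(n-1)/2n}$ lying within distance $O(\eps^{1/2n})$ of $I$ (this is where the exponent $1/2n$ and the factor $\sqrt{n-1}$ enter). Applying the local estimate at this $I_*$, any orbit starting at $I(0) = I$ stays in the corresponding neighborhood for time $\mathcal{T}$, and the triangle inequality gives the global drift bound $|I(t) - I(0)|_2 \le \frac{8\sqrt{n-1}M_+}{M_-^2}\eps^{1/2n}|\nabla H_0|_\infty$. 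The last displayed restriction (the logarithmic one) is the condition that ensures the period $T$ of the Diophantine approximant does not exceed the time $\mathcal{T}$ produced by the local theorem, i.e.\ that the local confinement lasts long enough to cover one full excursion before re-centering; the $\ln(\cdot)$ factor is the usual price of optimizing the Fourier truncation order against $T$.

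The technical heart — and the step I expect to be the genuine obstacle — is establishing Theorem~\ref{Thm: main} itself with constants this clean, i.e.\ running the continuous-averaging flow $\dot H = \{H, F\}$ in the higher-dimensional, one-fast-angle setting and tracking the norm growth through the majorant estimates of Appendix~\ref{section: majorant}. Once that normal form is in hand, the deduction of Theorem~\ref{Thm: global} is essentially bookkeeping: choosing $\rho_1$ as the free parameter, verifying each of the three bullet-point conditions is compatible with $\eps$ small, and assembling the exponential time bound $\mathcal{T} = |\nabla H_0|_\infty^{-1}\exp\big((M_-/M_+)^2 \rho_1/(8\sqrt{n-1}\eps^{1/2n})\big)$ from the exponent of the normal-form remainder. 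I would organize the write-up so that Section~\ref{section: local} isolates the periodic-orbit estimate and Section~\ref{section: global} does only the Dirichlet covering plus triangle-inequality patching, keeping the $\eps$-smallness conditions exactly in the form needed to quote Theorem~\ref{Thm: main}.
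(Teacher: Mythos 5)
Your proposal follows essentially the same route as the paper: the normal form of Theorem~\ref{Thm: main} feeds the local stability estimate near a periodic orbit (Theorem~\ref{Thm: local}), and globalization is done exactly via Dirichlet simultaneous approximation (Lemma~\ref{Lm: dirichlet}, with $Q=\eps^{-(n-1)/2n}$) plus a triangle inequality, the bullet-point restrictions arising by substituting the resulting bounds on $\bar T$, $r$, $\mathcal{R}$, $K$ into the hypotheses of those results. One minor misattribution: the logarithmic condition is not a comparison of the approximant's period with $\mathcal{T}$; in the paper it is simply the way the third restriction of Theorem~\ref{Thm: main} (the majorant bound $A(\dt^*)<4\sm^2/25$) is split into two simpler inequalities by requiring $\dfrac{2n}{\rho_3K}\left(1+\ln\dfrac{2K^2\rho_3}{n}\right)\leq 1$.
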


The constant $\mu$ plays the same role as the constant $E$ in \cite{LNN,N}. It is dual to $\eps$ since only the product $\eps\mu$ enters the original Hamiltonian. We need the smallness of $\mu$ to make the first bullet point in Theorem~\ref{Thm: global} satisfied. The same restriction is expressed in \cite{LNN,N} by introducing a constant $g$. The second bullet point can be satisfied easily by taking $\eps$ small enough. To improve the stability time, we want $\rho_1$ to be as large as possible, but the third bullet point gives a restriction of $\rho_1$ so that we need to optimize among $\rho_1,\rho_2,\rho_3$. This restriction appears due to the finiteness of the width of analyticity of the action variables $I$ and degenerate variables $x,y$. It shows up in a different form in \cite{LNN} as item $(ii)$ of Theorem 2.1, where the choice of $R$ there can be as small as $\eps^{1/2n}$. We will give more discussions in Remark~\ref{Rk: rho} and~\ref{Rk: global}. We will see from the following Theorem~\ref{Thm: main} that our normal form theorem obtained from continuous averaging improves that obtained from the iteration method. Therefore we see we also get improved $C_2$ here even though $\rho_1$ is not expressed explicitly. 

\section{Normal form}\label{section: normal}
Our main work in this paper is to obtain a normal form theorem using continuous averaging. Following Lochak, we do the averaging in a neighborhood of a periodic orbit.
\begin{Def}
We define $\om^*=(p_1,p_2,\cdots,p_n)/\bar{T},\quad p_i\in\Z,\ \bar{T}\in\R\setminus\{0\}$, and $\mathrm{g.c.d.}(p_1,p_2,\cdots,p_n)=1$. This is the frequency vector of a periodic orbit of the unperturbed Hamiltonian $H_0$. The period $T$ of this vector is, $T=2\pi\bar{T}$.
\end{Def}
Integer vectors $k$ with $\langle\om^*,k\rangle\neq 0$ give us \begin{equation}|\langle k,\om^*\rangle|=|k\cdot(p_1,p_2,\cdots,p_n)|/\bar{T}\geq 1/\bar{T}\label{eq: tbar}\end{equation}

After a proper translation in the space of action variables, wlog, we assume \[\om(0):=\dfrac{\partial H_0}{\partial I}(0)=\om^*.\]
We can split the Hamiltonian~(\ref{eq: main}) into four parts
\begin{equation}H(I,\tt,x,y)=\langle\om^*,I\rangle+G(I)+\eps\bar{H}(I,\tt,x,y)+\eps\tilde{H}(I,\tt,x,y).\label{eq: hamiltonian}\end{equation}
where each of the terms is given in the next definition.
\begin{Def}
We use the Taylor expansion of $H_0$ to split it as
 \[H_0(I)=\langle\om^*,I\rangle+G(I),\]
where $G(I)$ contains the higher order terms.
For $H_1$ part, we use the Fourier expansion $\displaystyle H_1=\sum_{k\in\Z^n}H^k(I,x,y)e^{i\langle k,\tt\rangle}$ to write
\[\eps H_1(I,\tt,x,y)=\eps\bar{H}(I,\tt,x,y)+\eps\tilde{H}(I,\tt,x,y),\]
where \begin{equation}
\begin{aligned}\nonumber
&\eps\bar{H}:=\eps\sum_{\langle k,\om^*\rangle=0}H^k e^{i\langle k,\tt\rangle},\quad \textrm{the resonant part,}\\
&\eps\tilde{H}:=\eps\sum_{\langle k,\om^*\rangle\neq 0}H^k e^{i\langle k,\tt\rangle}, \quad \textrm{the nonresonant part.}\\
\end{aligned}\nonumber
\end{equation}\label{def: resonant}
\end{Def}
The Hamiltonian equations can be written as
\begin{equation}
\begin{aligned}
&\dot{I}=-\eps\dfrac{\pt\bar{H}}{\pt \tt}(I,\tt,x,y)-\eps\dfrac{\pt\tilde{H}}{\pt \tt}(I,\tt,x,y),\\
&\dot{\tt}=\om^*+\nabla G(I )+\eps\dfrac{\pt\bar{H}}{\pt I}(I,\tt,x,y)+\eps\dfrac{\pt\tilde{H}}{\pt I}(I,\tt,x,y),\\
&\dot{x}=-\eps\dfrac{\pt H_1}{\pt y}, \quad \dot{y}=\eps\dfrac{\pt H_1}{\pt x}.\\
\end{aligned}\label{eq: Hamiltonian eq}
\end{equation}

\begin{Thm}
Suppose $|I|_2,|x|_2,|y|_2 \leq \mathcal{R},\ (I,\tt,x,y)\in \mathcal{D}(\rho,\sm)$ for some $\mathcal R, \rho,\sigma>0$. Then
there exists $\eps_0>0$, such that for any $0<\eps<\eps_0$, there exist $\rho_1,\rho_2,\rho_3>0$ such that $\rho_1+2\rho_2+\rho_3=\rho$ and a symplectic change of variables,
$(I,\tt,x,y)\rightarrow(I',\tt',x',y')$, for $| I'|_2 \leq \mathcal{R}$, $(I',\tt',x',y')\in \mathcal D(\rho_2,4\sm/5)$, which sends the Hamiltonian~$(\ref{eq: hamiltonian})$ to the following normal form:
\[H=\langle\om^*,I'\rangle+G(I')+\eps \bar{\Psi}(I',\tt',x',y')+\eps\tilde{\Psi}(I',\tt',x',y').\]
with the nonresonant part $($see Definition~\ref{def: resonant}$)$\[\left\Vert\tilde{\Psi}(I',\tt',x,y)\right\Vert_{\rho_2}\leq \dfrac{5\mu}{\rho_2^n} \exp\left(-\dfrac{2\pi\rho_1}{M_+\mathcal{R}T}\right),\]
the resonant part $\left\Vert\bar{\Psi}\right\Vert_{\rho_2}\leq \dfrac{5\mu}{\rho_2^n},$ and the change of variables
\[| (I',\tt',x',y')-(I,\tt,x,y)|_\infty\leq \dfrac{5\eps\mu T}{2\pi(\rho_2+\rho_3)^n}.\]
The $\eps_0$, $\mathcal{R}$, $K=\dfrac{2\pi\rho_1}{\rho_3M_+\mathcal{R}T}$ and $\rho_1,\rho_2,\rho_3$ satisfy the following restrictions,
\begin{itemize}
\item $5(\sqrt{n}+2\sqrt{m})\mathcal{R}<\sm,$
\item $2(n+2m)\leq K,\quad 2\sm/(5\rho_1)\leq K$,
\item $\dfrac{3}{\pi}e^\sm\eps\mu T\dfrac{(2K)^{n-1}K^2\rho_3}{n!}\left(1+\frac{2n}{\rho_3K}\left(1+\ln\dfrac{2K^2\rho_3}{n}\right)\right)\leq\dfrac{4\sm}{25}$.
\end{itemize}
Moreover, $\rho_1$ can be arbitrarily close to $\rho$ if $\eps$ is sufficiently small (see Remark~\ref{Rk: rho}).
\label{Thm: main}
\end{Thm}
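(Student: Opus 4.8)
The plan is to realise the normalising transformation as the time-$\Delta$ evolution of a single continuous-averaging flow, rather than as an infinite composition of discrete averaging steps. The decisive structural fact is that, after the splitting of Definition~\ref{def: resonant}, there are no genuine small divisors near the periodic torus $\{\om=\om^*\}$: by~(\ref{eq: tbar}) every Fourier mode $k$ with $\langle k,\om^*\rangle\neq0$ obeys the uniform gap $|\langle k,\om^*\rangle|\geq1/\bar T$. So I would introduce an auxiliary ``time'' $\dt$ and a family $H_\dt=\langle\om^*,I\rangle+G(I)+\eps\bar H_\dt+\eps\tilde H_\dt$, equal at $\dt=0$ to the Hamiltonian~(\ref{eq: hamiltonian}), solving
\[\partial_\dt H_\dt=-\{H_\dt,\eps F_\dt\},\]
where $F_\dt$ is manufactured from the nonresonant part $\tilde H_\dt$ alone, as the solution of the (small-divisor-free) homological equation $\{\langle\om^*,I\rangle,F_\dt\}=\pm\tilde H_\dt$, i.e. $\widehat{F_\dt}^{\,k}=\pm\widehat{\tilde H_\dt}^{\,k}/(i\langle k,\om^*\rangle)$ when $\langle k,\om^*\rangle\neq0$ and $0$ otherwise, or, alternatively, as its Hilbert-transform regularisation in the fast angle $\langle\om^*,\tt\rangle$ — the several-degrees-of-freedom analogue of the generating function used in the model computation of Section~\ref{section: ca}. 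The sought symplectic map is then the composition of the $\dt$-flows of the Hamiltonians $\eps F_\dt$ over $\dt\in[0,\Delta]$, with $\Delta$ the largest value the $\tt$-analyticity budget $\rho_1$ permits, and $\bar\Psi,\tilde\Psi$ are the resonant and nonresonant parts of $H_\Delta$.

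The exponential gain comes from the linear part of the flow. With the above choice, $-\{\langle\om^*,I\rangle,\eps F_\dt\}$ contributes to each nonresonant Fourier mode a term of the form $-(\text{positive factor})\cdot\eps\widehat{\tilde H_\dt}^{\,k}$, so that $\tilde H_\dt$ is damped as $\dt$ grows, at a rate no weaker than the gap~(\ref{eq: tbar}) permits, while $F_\dt$ carries no resonant mode and hence leaves $G$ and $\eps\bar H_\dt$ untouched at leading order. Integrating this damping over the whole run $[0,\Delta]$ and measuring in the weighted Fourier norm — where each surviving nonresonant mode is suppressed by at most $e^{-\Delta/\bar T}$ — produces, after budgeting $\Delta$ against $\rho_1$ (one $\dt$-unit costing, in $\tt$-analyticity, the rate $M_+\mathcal R$ at which the $I$-dependence of $F_\dt$ distorts the $\tt$-strip, since it enters the flow only through Poisson brackets with the quasi-convex part $\langle\om^*,I\rangle+G(I)$, whose gradient varies by at most $M_+\mathcal R$ over the $\mathcal R$-ball), the stated bound $\|\tilde\Psi\|_{\rho_2}\leq5\mu\rho_2^{-n}\exp(-2\pi\rho_1/(M_+\mathcal R T))$, together with $\|\bar\Psi\|_{\rho_2}\leq5\mu\rho_2^{-n}$. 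The displacement estimate $|(I',\tt',x',y')-(I,\tt,x,y)|_\infty\leq5\eps\mu T/(2\pi(\rho_2+\rho_3)^n)$ then follows by integrating the generating vector field $\eps(\partial_I F_\dt,-\partial_\tt F_\dt,\partial_y F_\dt,-\partial_x F_\dt)$ over $[0,\Delta]$, bounding $F_\dt$ by $\bar T\mu=T\mu/2\pi$ via~(\ref{eq: tbar}), and using Cauchy estimates in $I,x,y$ for the $(\rho_2+\rho_3)^{-n}$.

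The heavy part is to render the nonlinear terms $\{G+\eps\bar H_\dt+\eps\tilde H_\dt,\,\eps F_\dt\}$ harmless, which I would do by a bootstrap in $\dt$ in the scale of weighted Fourier norms of Definition~\ref{def: norm}. One takes a decreasing family of domains $\mathcal D(\rho(\dt),\sm(\dt))$, with the $\tt$-width sliding from $\rho$ toward $\rho_2$ — budgeting $\rho_1$ for the exponential gain and retaining $\rho_3$ as a Cauchy buffer for differentiating $F_\dt$ — and the $(x,y)$-width sliding from $\sm$ down toward $4\sm/5$; postulates on $[0,\dt_*]$ a priori bounds of the form $\|\tilde H_\dt\|\leq C\mu e^{-\dt/\bar T}$ and $\|\bar H_\dt\|\leq C\mu$; estimates the brackets with the majorant machinery of Appendices~\ref{section: appendix} and~\ref{section: majorant}; and shows that the bounds self-improve, hence persist up to $\dt=\Delta$. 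The three displayed restrictions are exactly what makes this closure work: $5(\sqrt n+2\sqrt m)\mathcal R<\sm$ keeps the degenerate variables inside the complex $(x,y)$-domain along the flow; $2(n+2m)\leq K$ and $2\sm/(5\rho_1)\leq K$, with $K=2\pi\rho_1/(\rho_3M_+\mathcal R T)$ the effective Fourier scale, make the Cauchy and Taylor expansions in $I,x,y$ converge geometrically; and the third, cumbersome inequality bounds the total $(x,y)$-analyticity loss accumulated over $\dt\in[0,\Delta]$ by $4\sm/25$, so that indeed $\sm(\Delta)\geq4\sm/5$. The final assertion that $\rho_1\to\rho$ as $\eps\to0$ is read off from these constraints by letting $\rho_2,\rho_3$ shrink with $\eps$.

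I expect the genuine obstacle to be precisely this simultaneous analyticity bookkeeping — the $\tt$-width traded for exponential smallness versus the $(x,y)$-width consumed by the nonlinearity — together with closing the bootstrap so that the quadratic terms never overtake the linear damping over a $\dt$-interval that is long (of order $\eps^{-1/2n}$ after Lochak's rescaling), and with pinning down the precise constant in the exponent, i.e. exactly how $\rho_1$, $\bar T=T/2\pi$, $M_+$ and $\mathcal R$ combine. A subsidiary but essential point is that the operator producing $F_\dt$ must be shown to act on the weighted Fourier norm \emph{with no loss in $\rho$} — true for the multiplier $1/(i\langle k,\om^*\rangle)$ by~(\ref{eq: tbar}), and for its Hilbert-transform variant by boundedness of the discontinuous multiplier $\mathrm{sgn}\langle k,\om^*\rangle$ — since that is exactly what frees the scheme from the fixed per-step analyticity loss of iterative normal-form constructions and delivers the explicit constant.
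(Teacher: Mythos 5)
Your overall strategy is the same as the paper's: a Treschev-style continuous averaging flow near the periodic orbit, with the exponential gain coming from the uniform gap~(\ref{eq: tbar}), the flow time budgeted against $\rho_1$ at the cost rate $M_+\mathcal{R}$ coming from the bracket with $G$, and majorant estimates to tame the nonlinearity. But there is a genuine gap exactly where you yourself locate ``the heavy part'': you never supply the mechanism that closes the bootstrap, and that mechanism is the actual content of the paper's proof. Concretely, the paper does not work with a single norm bound of the form $\Vert\tilde H_\dt\Vert\leq C\mu e^{-\dt/\bar T}$; it introduces the time-dependent partition of $\Z^n$ into $D_\pm(\dt)$, $D_0$, $D_>(\dt)$ with the cutoff $K$ and the sign function $\sm_k(\dt)$ (Definition~\ref{Def: main def}), tracks each Fourier mode with its own decay rate $|\langle k,\om^*\rangle|$, rewrites the equations for the Hamiltonian vector field via the imaginary-time operator $\mathbf{g}$ and the majorant commutator of \cite{PT}, majorates by an explicit Burgers-type system $W, W^{|k|}$, and—crucially—proves the lattice-point counting estimate of Lemma~\ref{Lm: estimate} (Appendix~\ref{section: appendix}), which bounds the convolution sums $\Sm_\pm,\Sm_0,\Sm_>$ over the shrinking diamonds intersected with the hyperplanes $\mathrm{HP}_d$. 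That counting is what the paper identifies as the main difficulty in passing from one fast angle to $n$ frequencies, and it is precisely what produces the explicit quantity $A(\dt^*)$ and hence your third restriction with the factor $(2K)^{n-1}K^2\rho_3/n!$; without it, ``estimate the brackets with the majorant machinery'' is a placeholder, not a proof, and the quadratic terms (in particular the contribution of the resonant part $\bar H$, the sum $\Sm_0$, which the paper calls the most troublesome) are not shown to stay below the linear damping over the long interval $[0,\dt^*]$.

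A second, related problem is your hedge between two generators. The divisor-divided choice $\widehat F^k=\widehat{\tilde H}^k/(i\langle k,\om^*\rangle)$ and the sign-only (Hilbert-transform) choice are not interchangeable: with the former, the linear damping produced by $-\{H,\eps F\}$ acts at rate $O(\eps)$ uniformly in $k$, so the flow time and the analyticity budget scale completely differently from what you assert, and your claim that the damping rate is ``no weaker than the gap~(\ref{eq: tbar}) permits'' fails. Only the sign-type generator $F=\sum_k i\eps\sm_k(\dt)H^k e^{i\langle k,\tt\rangle}$ used in the paper (equation~(\ref{eq: F})), with supports shrinking according to $D_\pm(\dt)$ and stopping time $\dt^*\leq K\bar T\rho_3$, yields the mode-wise damping $e^{-|\langle k,\om^*\rangle|\dt}$ that combines with the $\rho_1$-budget $\dt M_+\mathcal{R}\leq\rho_1$ to give $e^{-2\pi\rho_1/(M_+\mathcal{R}T)}$, and simultaneously keeps $F$ of size $O(\eps\mu)$ so that the deviation bound of Lemma~\ref{Lm: dI} comes out as stated (the factor $\bar T=T/2\pi$ arises from integrating $e^{-\dt/\bar T}$ in time, not from bounding $F$ by $\bar T\mu$ as you propose). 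So while your plan points in the right direction, the specific choice of generator, the mode-by-mode partition with its stopping time, and the counting estimates of Appendix~\ref{section: appendix} are missing ideas rather than routine details, and the stated constants (the prefactor $5\mu/\rho_2^n$ and the third bullet) cannot be recovered from the proposal as written.
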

The exponential smallness obtained here improves that of \cite{LNN, N}. Continuous averaging enables us to get rid of some extraneous numerical factors that worsen the estimates. Moreover, our method has an advantage, that is we do not need to do a preliminary transform which is necessary in \cite{LNN, LN}. The proof of this result is contained in Section~\ref{section: main}.
\section{A brief introduction to the continuous averaging}\label{section: ca}
In this section, we give an introduction to the continuous averaging method. Please see the chapter 5 of \cite{TZ} for more details.
We try to explain the key points of the method that will be used in our later proof. 
\subsection{Derivation of the continuous averaging equation}\label{subsection: derivation}
We write the Hamiltonian~(\ref{eq: main}) as $H(z),\ z=(I,\tt,x,y)$. Suppose we have a symplectic change of variables $z=z(Z(\dt),\dt)$ depending on parameter $\dt$, where $Z(\dt)$ denotes the new variables. Then we have
\begin{equation}
H(z)=H(z(Z,\dt)):=H(Z,\dt).\nonumber
\end{equation}
\begin{equation}
\begin{aligned}
& \dfrac{\pt H}{\pt Z}\dfrac{dZ}{d\dt}+\dfrac{\pt H}{\pt\dt}=0.
\end{aligned}
\label{eq: DF}
\end{equation}
If we choose the flow of $\dfrac{dZ}{d\dt}$ to be the Hamiltonian flow generated by a Hamiltonian isotopy $F(Z,\dt)$, i.e.\\
\begin{equation}\dfrac{dZ}{d\dt}=JdF(Z,\dt),\quad
J=\left[\begin{array}{cccc}
0 &   -\mathrm{Id}      \\
\mathrm{Id} &   0     \\
\end{array}\right],\label{eq: JdF}\end{equation}
 with initial value $Z\big|_{\dt=0}=z$, then the change of variables is symplectic and we get
\begin{equation}
H_\dt=-\{H,F\}_Z=-\{H,F\}_z,
\label{eq: cont}
\end{equation}
where the subscript $\dt$ means partial derivative. The last equality follows from the fact that the Poisson bracket is invariant under symplectic transformations. In the following, we only work with the variables $z$.\\
To simplify our discussion, we consider a special case of~(\ref{eq: main}) with $m=n=1$. A further simplification is to consider only time-periodic nonautonomous systems. This is equivalent to requiring that $H_0(I)=I$ in equation~(\ref{eq: main}) and $H_1(x,y,\tt)$ independent of $I$. From equation~(\ref{eq: cont}), we have:
\begin{equation}
H_{\dt}=-F_\tt -\{ H,F\}_{(x,y)},
\label{eq: nonauto}
\end{equation}
where $\{\cdot,\cdot\}_{(x,y)}$ stands for the $x,y$ part of the Poisson bracket.\\
 Our goal is to show that

 \emph{if we choose a suitable Hamiltonian isotopy $F$ and extend $\delta$ as large as possible, the dependence of $H$ on $\tt$ can be killed to be exponentially small, i.e. $O(e^{-c/\eps})$ for some constant $c$.}

 Suppose $H(z,\dt)$ has Fourier expansion \begin{equation}\displaystyle H(z,\dt)=I+\eps\langle H_1\rangle+\eps\sum_{k\in\Z\setminus \{0\}}H^{k}(x,y,\dt)e^{ik\tt},\label{eq: egham}\end{equation}
where $\eps\langle H_1\rangle$ means the zeroth Fourier coefficient of $H_1$.\\
We choose the Hamiltonian isotopy $F$ as the ``Hilbert transform":
\begin{equation}
\displaystyle F(z,\dt)=-\sum_{k\in\Z\setminus\{0\}}i\eps\sm_k H^{k}(x,y,\dt)e^{ik\tt},\quad \sm_k=\mathrm{sgn} (k).\label{eq: hilbert}
\end{equation}
Now equation~(\ref{eq: nonauto}) has the form in terms of Fourier coefficients:
\begin{equation}
H_{\dt}^k=-|k|H^k+i\eps\sm_k\{\langle H_1\rangle,H^k\}_{(x,y)}+i\eps\sum_{l+m=k}\sm_m\{H^l,H^m\}_{(x,y)},\ k\in\Z\setminus\{0\}.
\label{eq: fourier}
\end{equation}
We show this $F$ is the good choice that makes the dependence on $\theta$ decrease exponentially.
\subsection{The choice of the Hamiltonian isotopy $F$}
\subsubsection{Heuristic argument}
Following \cite{TZ}, we explain here the heuristic ideas that justify this choice of $F$. If we set $\eps=0$ in~(\ref{eq: fourier}), we get
\[H_\dt^k=-|k|H^k,\]
whose solution tends to zero as $\dt\to\infty$. If we neglect the third term in the RHS of~(\ref{eq: fourier}), we have
\[H_\dt^k=-|k|H^k-i\eps\sm_k\{H^k,\langle H_1\rangle\}_{(x,y)}.\]
It has an exact solution of the form
\begin{equation}
H^k(I,x,y,\dt)=e^{-|k|\dt}H^k(I,x,y,0)\circ g^{-i\eps \sm_k \dt},
\label{eq: exact}
\end{equation}
where $g$ means the Hamiltonian flow generated by the Hamiltonian $\langle H_1\rangle$. Notice the imaginary unit $i$ here. It tells us that the flow is considered with purely imaginary time. As $\dt$ increases, the complex width of analyticity is lost gradually. So formula~(\ref{eq: exact}) has sense only if we take $\eps\dt<\rho$, where $\rho$ is the width of analyticity in $\tt$. This is an obstacle for the extendability of $\dt$.

We see from the heuristic argument that this choice of $F$ gives us the exponential decay as well as a good guess for the stopping time.
\subsubsection{Comparison with the Lie method}\label{subsection: comparison}
The Lie method is used in the works \cite{N,LN,LNN}. Before working out the detailed proof of the above heuristic argument, we explain the ``Hilbert transform" first. In fact this choice of $F$ is strongly related to the classical averaging theory. Let us recall what we usually do in the Lie method.

Define the linear operator of taking Lie derivative along the Hamiltonian flow generated by the Hamiltonian function $\hat{F}$: $\mathcal{L}_{\hat{F}} H =\{H,\hat{F}\}$.

The time-$1$ map of~(\ref{eq: DF}) and~(\ref{eq: cont}) is:
\begin{equation}
\begin{aligned}
& H\big|_{\dt=1}=e^{\mathcal{L}_{\hat{F}}}H\\
& =H+\{H,\hat{F}\}+\dfrac{1}{2}\{\{H,\hat{F}\},\hat{F}\}+\cdots\\
& =H_0+\eps H_1 +\{H_0,\hat{F}\}+h.o.t.
\end{aligned}
\nonumber
\end{equation}
In each step of iteration, we need to solve the cohomological equation \begin{equation}\eps H_1+\{H_0,\hat{F}\}=0.\label{eq: cohom}\end{equation}
In fact, we are only able to solve \begin{equation}\eps H_1-\eps \langle H_1\rangle+\{H_0,\hat{F}\}=0.\label{eq: cohom1}\end{equation}
By comparing the Fourier coefficients, we obtain the following 
\begin{equation}
\begin{aligned}
&\eps H^k(I)+ik\hat{F}^k=0\Longrightarrow \hat{F}^k=i\dfrac{\eps H^k(I)}{k},\quad k\neq 0.\\
\end{aligned}
\label{eq: cohom2}
\end{equation}
Now we can explain why we choose $F$ as the Hilbert transform of $H$ in~(\ref{eq: hilbert}). We select $F$ to inherit the most important information in $\hat{F}$, namely, the imaginary unit $i$ and $\mathrm{sgn}(k)$.
Readers can check that we still get the heuristic argument above if we choose the $\hat{F}$ whose Fourier coefficients are~(\ref{eq: cohom2}) to do the averaging in~(\ref{eq: cont}).
\subsection{The integral equation.}\label{subsection: exact solution}
Now we take into account the third term in the RHS of equation~(\ref{eq: fourier}). We first remove the $-|k|H^k$ term in equation~(\ref{eq: fourier}) by setting $H^k=e^{-|k|\dt}u^k$ to obtain
\begin{equation}
u^k_\dt=-i\eps\sm_k\{u^k,\langle H_1\rangle\}_{(x,y)}+2i\eps\sm_m\sum_{\substack{l+m=k,\\
m<0<l}}e^{-(|l|+|m|-|k|)\dt}\{u^l,u^m\}_{(x,y)}.\label{eq: v}
\end{equation}
If we define an operator $g^{is*}f:=f\circ g^{is}$, where $g^t$ is the flow generated by the Hamiltonian $-\langle H_1\rangle$, the exact solution of the truncated equation \[u^k_\dt=-i\eps\sm_k\{u^k,\langle H_1\rangle\}_{(x,y)}\] would be $g^{\eps\sm_ki\dt*}u^k(x,y,0)$. \\
Then using the variation of parameter method in ODE, we can write the exact solution to equation~(\ref{eq: v}) in the following form.
\begin{equation}
\begin{aligned}
&\displaystyle u^k(x,y,\dt)\\
&=g^{\eps\sm_ki\dt*}u^k(x,y,0)+2i\eps\sm_m\int_{0}^{\dt}e^{-(|l|+|m|-|k|)s}g^{\eps\sm_ki(\dt-s)*}\sum_{\substack{l+m=k,\\ m<0<l}}\{u^l,u^m\}_{(x,y)}\ ds\\
&=g^{\eps\sm_ki\dt*}u^k(x,y,0)\\
&+2i\eps\sm_m\int_{0}^{\dt}\sum_{\substack{l+m=k,\\ m<0<l}}e^{-(|l|+|m|-|k|)s}\{g^{\eps\sm_ki(\dt-s)*}u^l,g^{\eps\sm_ki(\dt-s)*}u^m\}_{(x,y)}\ ds.\\
\end{aligned}\label{eq: int}
\end{equation}
We will analyze this equation to study its solution. To do so, we need a good control of the non-homogeneous term, i.e. the second term in the RHS.
\subsection{Control of the nonhomogeneous term of equation~(\ref{eq: int}).}
To control the nonhomogeneous term, we use the majorant estimate. The majorant relation $``\ll"$ is defined as follows.
\begin{Def}
For any two functions $f(z)$, $g(z)$, $z=(z_1,z_2,\cdots,z_m)$, analytic at the point $z=0$,
\[f(z)=\sum_\bt f_\bt z^\bt,\quad g(z)=\sum_\bt g_\bt z^\bt,\]
\[\bt=(\bt_1,\bt_2,\cdots,\bt_m),\quad \bt_j\geq 0,\quad z^\bt=z_1^{\bt_1}\cdots,z_m^{\bt_m}.\]
We say that $g$ is a majorant for $f$ $(f\ll g)$ if for any multi-index $\bt$, we have $g_\bt\geq|f_\bt|$.\label{Def: maj1}
\end{Def}
The proof is first to guess a majorant assumption, then show the function in the assumption satisfies an equation that majorates the integral equation~(\ref{eq: int}). This checks the assumption and closes up the argument. \\
Now we make a majorant assumption
\begin{equation}
g^{\eps\sm_ki \tau *}u^k(x,y,\dt)\ll \mu e^{-|k|\rho}V(Y,\dt),\quad Y=x+y,\ |\tau|<\dt^*,
\label{eq: assumption}
\end{equation}
where $\dt^*\sim \rho/\eps$ is the maximal extension time determined by the homogeneous part of equation~(\ref{eq: fourier}) in the heuristic argument. The $e^{-|k|\rho}$ characterizes the way how the Fourier coefficients decay in the case of analytic perturbation and $\mu=\Vert H_1\Vert_\rho$.
We choose $Y=x+y$ to make it easier to calculate the derivatives since $\dfrac{\pt V}{\pt x}=\dfrac{\pt V}{\pt y}=\dfrac{\pt V}{\pt Y}$. \\
Then the integrand of equation~(\ref{eq: int}) can be majorated by $C(s)(V_Y)^2$, where \[\displaystyle C(s)=4\eps \mu\sum_{\substack{l+m=k,\\ m<0<l}}e^{-(|l|+|m|-|k|)(s+\rho)}\leq 4\mu\eps(1+1/\rho):=C.\]
This $C$ depends on the smoothness and magnitude of $H_1$ and the number of combinations $l+m=k$. The number of combinations of integers in one dimensional case is easy to estimate, but in higher dimensional case it becomes very difficult, which is the main difficulty that we need to overcome in this paper.

If we can solve the equation $V_\dt=CV_Y^2$, then equation~(\ref{eq: int}) can be viewed as\begin{equation}e^{|k|\rho}u^k\ll V(0)+\int_0^\dt V_s\ ds=V(0)+V(\dt)-V(0)=V(\dt).\label{eq: Schauder}\end{equation}
This checks the majorant assumption~(\ref{eq: assumption}).
In order to solve the equation $V_\dt=C(V_Y)^2$, we apply the operator $\dfrac{\pt}{\pt Y}$ to the equation. We get the following Burgers equation by setting $U=V_Y$.\\
\[U_\dt=2CUU_Y,\quad U(Y,0)=\dfrac{\sm }{\sm-Y}.\]
Here $U(Y,0)$ majorates $\nabla u$ in the sense $\nabla u^k(x,y,0)\ll Me^{-|k|\rho}U(Y,0)$. The initial condition $\dfrac{\sm }{\sm-Y}$ is due to Lemma~\ref{Lm: digression} in Appendix~\ref{section: majorant}, where $\sm$ is the width of analyticity in the slow variables $(x,y)$.
\subsection{Outcome of the continuous averaging procedure}
The Burgers equation can be solved explicitly using the characteristics method in PDE. The solution is
\begin{equation}U(Y,\dt)=\dfrac{2\sm }{(\sm-Y)+\sqrt{(\sm-Y)^2-8\sm C\dt}}.\label{eq: burgers}\end{equation}
In order to ensure $(\sm-Y)^2-8\sm C\dt\geq 0$, we obtain the maximal flow time given by the slow variables is $\dt<\dfrac{\sm}{8C}$. In fact $C=O(\eps)$, so combined with $\dt\leq \dt^*\sim\rho/\eps$ we get the maximal flow time is $O(1/\eps)$. We also notice $U$ is always bounded provided $|Y|$ is sufficiently small, so is $V$. Recall that we defined \[H^k=e^{-|k|\dt}u^k\ll e^{-|k|\dt-|k|\rho}V.\] Therefore each Fourier coefficient $H^k$ after the continuous averaging would be less than $e^{-\dt}=O(e^{-c/\eps})$ for some constant $c$. Adding up all these Fourier terms, we recover the Hamiltonian after the averaging, which is of order $O(e^{-c/\eps})$. This is the result proved in \cite{Tr1, Tr2, TZ}. We will work out all the details in Section~\ref{subsection: closeup}.
\section{Continuous averaging proof of the Normal form Theorem~\ref{Thm: main}}\label{section: main}
Now we prove Theorem~\ref{Thm: main} using the continuous averaging method. Let us go back to the setup in Section~\ref{section: normal}. Since we are looking at a motion that is very close to periodic orbit in the region of the phase space, the continuous averaging explained in the previous section could be applied. The periodic orbit corresponds to the fast angle $\tt$ in equation~(\ref{eq: egham}). The nonresonant part $\tilde{H}$ corresponds to the $\tt$ dependence term $\sum_{k\neq 0} H^k e^{ik\tt}$ in equation~(\ref{eq: egham}). The $\langle\om^*,I\rangle$ will produce the exponential decay in the same way as the term $I$ in equation~(\ref{eq: egham}) did in equation~(\ref{eq: exact}). And the term  $G(I)$ will generate the imaginary flow in the same way as the term $\langle H_1\rangle$ in equation~(\ref{eq: egham}) did in equation~(\ref{eq: exact}). Finally, the term $\bar{H}$ leads to additional difficulties.

We devote the remaining part of this section to the proof of Theorem~\ref{Thm: main}. The proof is organized as follows. 
\begin{itemize}
\item Set up the continuous averaging in terms of Hamiltonian and get some heuristic understanding of the averaging process in Section~\ref{subsection: Ham}. 
\item Apply it the Hamiltonian vector field in Section~\ref{subsection: vec}. 
\item Following procedures in Section~\ref{section: ca}, we define the operator $\mathbf{g}$ to write the differential equations as integral equations, then we write down the majorant equation and prove the majorant relations.
\item Derive necessary estimates in the theorem from the majorant estimates in Section~\ref{subsection: closeup}.
\end{itemize}
\subsection{Continuous averaging for Hamiltonian~(\ref{eq: hamiltonian})}\label{subsection: Ham}
\begin{figure}[hat]
\begin{center}
\includegraphics[width=0.6\textwidth]{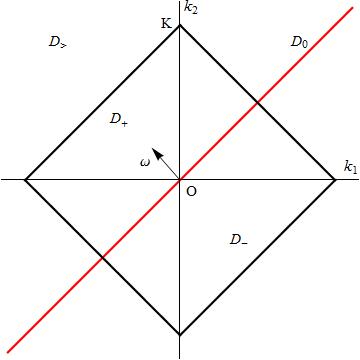}
\caption{Definition of sets when $\dt=0$. The diamond encloses integer vectors $|k|\leq K$. $\om$ is the frequency vector $\om^*$. The red line is a hyperplane perpendicular to $\om$. The region outside the diamond is $D_>(\dt)$. The red line splits the region inside the diamond into $D_+,D_-$.}
\end{center}\label{figure: def}
\end{figure}
In this section, we write down the continuous averaging and get a heuristic understanding. We start with a definition. As we have seen in Section~\ref{section: ca}, in the process of continuous averaging, we have different aspects like exponential decay, imaginary flow and nonhomogeneous terms.
\begin{Def} 
We define a partition of the width of analyticity $\rho$,\[ \rho=\rho_1+2\rho_2+\rho_3,\quad \rho_1,\rho_2,\rho_3>0,\]
and put \[ K=\dfrac{\rho_1}{\rho_3M_+\mathcal{R}\bar{T}}.\]
For $\dt>0,$ we also define the following sets to form a partition of the grid $\Z^n$.
\begin{equation}
\begin{aligned}\nonumber
& D_-(\dt)=\{l_-\in\Z^n|\quad\langle l_-,\om^*\rangle<0,\quad  |l_-|\rho_3+|\langle l_-,\om^*\rangle |\dt\leq \rho_3 K \},\\ \nonumber
& D_+(\dt)=\{l_+\in\Z^n|\quad \langle l_+,\om^*\rangle>0,\quad |l_+|\rho_3+|\langle l_+,\om^*\rangle |\dt\leq \rho_3 K \},\\ \nonumber
& D_0=\{l_0\in\Z^n|\quad \langle l_0,\om^*\rangle =0 \},\\ \nonumber
& D_>(\dt)=\Z^n\setminus (D_-(\dt)\cup D_+(\dt)\cup D_0).\\ \nonumber
\nonumber
\end{aligned}
\end{equation}
Finally, we define two functions of $\dt$ associated to the above sets.
\begin{equation}
\begin{aligned}
& \sm_k(\dt)=
\begin{cases}
\ 1 & k\in D_+(\dt),\\
\ -1 & k\in D_-(\dt),\\
\ 0 & k\in D_0\cup D_>(\dt).\\
\end{cases}\\
& S_k(\dt)=\int_0^{\dt}\sm_k(s)\ ds.\nonumber
\end{aligned}
\end{equation}\label{Def: main def}
\end{Def}
\begin{Rk}
\begin{itemize}
\item We split the analyticity width $\rho$ of the fast angle $\tt$ into $\rho=\rho_1+2\rho_2+\rho_3$. This splitting is quite flexible. We will optimize it to make $\rho_1$ as large as possible in Section~\ref{section: local} and~\ref{section: global}. Here $\rho_1$ would be used to control the imaginary flow, $\rho_3$ is used to do averaging, and $\rho_2$ is the remaining width of analyticity in angular variables after averaging. These distinctions will be made clear in the course of the proof.
\item We choose the cut-off $K$ to make sure that if $|k|\geq K$, then the corresponding Fourier coefficient is smaller than $e^{-\rho_3 K}$, which we think to be sufficiently small. A Fourier coefficient with $k\in D_\pm(\dt)$ will become smaller as $\dt$ increases. Once it is smaller than $e^{-\rho_3 K}$, the vector $k$ enters $D_>(\dt)$. So $D_\pm(\dt)$ keeps shrinking as $\dt$ increases. We stop running the continuous averaging once $D_\pm=\emptyset$.\\
\end{itemize}
\label{remark: main rk}
\end{Rk}
 We also define \[\dt^*:=\sup \{\dt|\ D_{\pm}(\dt)\neq \emptyset\}\] as the stopping time.
\begin{Lm}
The stopping time $\dt^*$ satisfies
\[\dt^*\leq K\bar{T}\rho_3.\]\label{Lm: dt*}
\end{Lm}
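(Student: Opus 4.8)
**Proof proposal for Lemma (bound on the stopping time $\dt^*$).**

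The plan is to extract the bound directly from the defining inequalities of the sets $D_\pm(\dt)$. Recall that $\dt^*=\sup\{\dt:\ D_+(\dt)\cup D_-(\dt)\neq\emptyset\}$, so it suffices to show that whenever $\dt>K\bar T\rho_3$ both sets are empty, i.e. no integer vector $k\in\Z^n$ with $\langle k,\om^*\rangle\neq 0$ can satisfy $|k|\rho_3+|\langle k,\om^*\rangle|\,\dt\le\rho_3K$.

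The key step is the arithmetic lower bound on nonzero resonant pairings already recorded in the excerpt as inequality~(\ref{eq: tbar}): for every $k\in\Z^n$ with $\langle k,\om^*\rangle\neq 0$ one has $|\langle k,\om^*\rangle|\ge 1/\bar T$. Feed this into the membership condition for $D_\pm(\dt)$: if $k\in D_+(\dt)\cup D_-(\dt)$, then in particular $\langle k,\om^*\rangle\neq 0$, hence
\[
\rho_3K\ \ge\ |k|\rho_3+|\langle k,\om^*\rangle|\,\dt\ \ge\ |\langle k,\om^*\rangle|\,\dt\ \ge\ \frac{\dt}{\bar T}.
\]
Therefore $\dt\le K\bar T\rho_3$ whenever $D_\pm(\dt)$ is nonempty. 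Taking the supremum over all such $\dt$ gives $\dt^*\le K\bar T\rho_3$, which is the claim. (One may discard the $|k|\rho_3\ge 0$ term since $|k|\ge 1$ for any such $k$ only strengthens the estimate and is not needed.)

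There is essentially no obstacle here: the statement is a one-line consequence of~(\ref{eq: tbar}) and the definition of $D_\pm(\dt)$. The only point requiring a word of care is that the inequality must be read for a \emph{fixed} admissible $k$, and then quantified: $D_\pm(\dt)\neq\emptyset$ produces a witness $k$, and that witness forces $\dt\le K\bar T\rho_3$; since this holds for every $\dt<\dt^*$ (along which $D_\pm$ is nonempty by definition of the supremum), passing to the limit yields the bound on $\dt^*$ itself. I would present exactly this chain of inequalities and conclude.
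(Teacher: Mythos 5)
Your proof is correct and follows essentially the same route as the paper: both arguments combine the membership inequality $|k|\rho_3+|\langle k,\om^*\rangle|\dt\le\rho_3K$ defining $D_\pm(\dt)$ with the lower bound $|\langle k,\om^*\rangle|\ge 1/\bar T$ from~(\ref{eq: tbar}) to force $\dt\le K\bar T\rho_3$. Your phrasing via the supremum is in fact slightly cleaner than the paper's statement that equality holds at $\dt^*$, but the substance is identical.
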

\begin{proof}
From the definition of $D_\pm(\dt)$, we know that at time $\dt^*$, we should have \[|l_+|\rho_3+|\langle l_+,\om^*\rangle |\dt^*= \rho_3 K.\]
We know $|\langle l_+,\om^*\rangle |\geq 1/\bar{T}$ from equation~(\ref{eq: tbar}). This implies $\dt^*\leq K\bar{T}\rho_3$.
\end{proof}
Now let us build our continuous averaging. This part is analogous to Section~\ref{subsection: derivation}. From equation~(\ref{eq: cont}), we have
\begin{equation}
\begin{aligned}
& H_\dt=\left\{F,H\right\}=\left\{F,\langle\om^*,I\rangle\right\}+\left\{F,G\right\}+\left\{F,\eps\bar{H}\right\}+\left\{F,\eps\tilde{H}\right\}.
\end{aligned}
\label{eq: main cont}
\end{equation}
\begin{Lm}
If we define \begin{equation}\displaystyle F:=\sum_{k\in\Z^n}i\eps\sm_k(\dt) H^k(I,x,y,\dt)e^{i\langle k,\tt\rangle}\label{eq: F}\end{equation} in the continuous averaging equation~$(\ref{eq: main cont})$, where $\sigma_k(\delta)$ is defined in Definition~$\ref{Def: main def}$, then depending on the properties of the Fourier mode $k$, we have the following three groups of PDEs.
\begin{subequations}
\begin{align}
&\textrm{For}\ k\in D_0,\\ \nonumber
&\displaystyle H^k_\dt e^{i\langle k,\tt\rangle}=\left\{F,\eps\tilde{H}\right\}^k=\sum_{l_{\pm}+l=k}i\sm_{l_{\pm}}\left\{H^{l_{\pm}}e^{i\langle l_{\pm},\tt\rangle},\eps H^le^{i\langle l,\tt\rangle}\right\},\quad l\in \Z^n\setminus D_0.\label{eq: a}\\
&\textrm{For}\ k\in D_-(\dt)\cup D_+(\dt),\\ \nonumber
&\displaystyle H^k_\dt e^{i\langle k,\tt\rangle}=-|\langle\om^*,k\rangle |H^k e^{i\langle k,\tt\rangle}+i\sm_k\left\{H^ke^{i\langle k,\tt\rangle},G\right\}+\left\{F,\eps\bar{H}\right\}^k+\left\{F,\eps\tilde{H}\right\}^k\\ \nonumber
& =-|\langle \om^*,k\rangle|H^ke^{i\langle k,\tt\rangle}+i\sm_k\left\{H^ke^{i\langle k,\tt\rangle},G\right\}+\sum_{l_{\pm}+l=k}i\sm_{l_{\pm}}\left\{H^{l_{\pm}}e^{i\langle l_{\pm},\tt\rangle},\eps H^le^{i\langle l,\tt\rangle}\right\}.\label{eq: b}\\
&\textrm{For}\ k\in D_>(\dt),\\ \nonumber
&\displaystyle H^k_\dt e^{i\langle k,\tt\rangle}=\left\{F,\eps\bar{H}\right\}^k+\left\{F,\eps\tilde{H}\right\}^k=i\sum_{l_{\pm}+l=k}\eps\sm_{l_{\pm}}\left\{H^{l_{\pm}}e^{i\langle {l_{\pm}},\tt\rangle}, H^le^{i\langle l,\tt\rangle}\right\}.\label{eq: c}
\end{align}\label{eq: 3 cont}
\end{subequations}
We have $l\neq 0$ in the latter two cases.\label{Lm: abc}
\end{Lm}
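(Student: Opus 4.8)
The plan is to substitute the generator $F$ from \eqref{eq: F} into the continuous averaging identity \eqref{eq: main cont}, expand $\{F,H\}$ along the four-term splitting $H=\langle\om^*,I\rangle+G+\eps\bar H+\eps\tilde H$, compute each Poisson bracket Fourier mode by Fourier mode, and then sort the resulting equation for $H^k$ according to whether $k$ lies in $D_0$, in $D_+(\dt)\cup D_-(\dt)$, or in $D_>(\dt)$. Since $\langle\om^*,I\rangle$ and $G$ are $\dt$-independent, the left side of \eqref{eq: main cont} is $\sum_k H^k_\dt\, e^{i\langle k,\tt\rangle}$, so it suffices to read off the $e^{i\langle k,\tt\rangle}$-coefficient on the right. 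I would first note that $F$ is a trigonometric polynomial in $\tt$: $\sm_k(\dt)=0$ off $D_+(\dt)\cup D_-(\dt)$ and these sets are finite (contained in $\{|k|\le K\}$ by Definition~\ref{Def: main def}), so every sum below is finite and the term-by-term manipulations are legitimate.

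Next I would treat the two brackets built from the action-only functions, which are ``diagonal'' in the Fourier index. Since $\langle\om^*,I\rangle$ and $G$ depend on $I$ alone, $\{F,\langle\om^*,I\rangle\}$ and $\{F,G\}$ only involve $\pt_\tt F$, and $\pt_{\tt_j}$ sends $H^k e^{i\langle k,\tt\rangle}$ to $ik_j H^k e^{i\langle k,\tt\rangle}$; hence each bracket keeps the index $k$ fixed. A short computation gives, for the $k$-th mode, the term $-\sm_k(\dt)\langle\om^*,k\rangle H^k e^{i\langle k,\tt\rangle}$ from the first bracket and the term $i\sm_k(\dt)\{H^k e^{i\langle k,\tt\rangle},G\}$ from the second. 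By the definition of $\sm_k(\dt)$, the factor $\sm_k(\dt)$ vanishes when $k\in D_0\cup D_>(\dt)$, while $\sm_k(\dt)\langle\om^*,k\rangle=|\langle\om^*,k\rangle|$ when $k\in D_+(\dt)\cup D_-(\dt)$. Thus the first bracket produces exactly the exponential-decay term $-|\langle\om^*,k\rangle|H^k e^{i\langle k,\tt\rangle}$ on $D_\pm(\dt)$ (and nothing elsewhere: on $D_0$ also $\langle\om^*,k\rangle=0$, on $D_>(\dt)$ also $\sm_k(\dt)=0$), and the second produces the imaginary-flow term $i\sm_k(\dt)\{H^k e^{i\langle k,\tt\rangle},G\}$, again supported only on $D_\pm(\dt)$.

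Finally I would combine the two brackets built from $\eps\bar H$ and $\eps\tilde H$ into $\{F,\eps(\bar H+\tilde H)\}$, a genuine convolution in $k$. Using bilinearity of the Poisson bracket and the fact that the bracket of two pure modes $H^{l'}e^{i\langle l',\tt\rangle}$ and $\eps H^l e^{i\langle l,\tt\rangle}$ is again supported on the single mode $l'+l$, the $e^{i\langle k,\tt\rangle}$-coefficient of $\{F,\eps(\bar H+\tilde H)\}$ equals $\sum_{l'+l=k} i\sm_{l'}(\dt)\{H^{l'}e^{i\langle l',\tt\rangle},\eps H^l e^{i\langle l,\tt\rangle}\}$; only $l'=l_\pm\in D_\pm(\dt)$ survive, since $\sm_{l'}(\dt)=0$ otherwise. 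The summand with $l=0$, which pairs the mode $k=l_\pm$ of $F$ with the $\tt$-average of the perturbation, is grouped with $G$ into the generator of the imaginary flow and so is omitted from this sum — for $k\in D_>(\dt)$ this is automatic since $D_>(\dt)\cap D_\pm(\dt)=\emptyset$ forces $l\ne0$, and for $k\in D_0$ one has in addition $\langle l,\om^*\rangle=-\langle l_\pm,\om^*\rangle\ne0$, so $l\notin D_0$. Collecting: for $k\in D_0$ the decay and imaginary-flow terms are both absent, giving \eqref{eq: a}; for $k\in D_+(\dt)\cup D_-(\dt)$ all three kinds of term are present, giving \eqref{eq: b}; for $k\in D_>(\dt)$ only the convolution survives, giving \eqref{eq: c}. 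I expect the main obstacle to be exactly this last bookkeeping step — matching the Poisson-bracket sign convention so that the decay coefficient comes out negative, and pinning down the precise index range of the convolution, in particular the exclusion of $l=0$ and, in \eqref{eq: a}, the constraint $l\notin D_0$; everything else is a mechanical consequence of the definitions of $F$, of $\sm_k(\dt)$, and of the partition $D_0\cup D_+(\dt)\cup D_-(\dt)\cup D_>(\dt)=\Z^n$.
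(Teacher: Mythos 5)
Your overall route is the same as the paper's: expand $\{F,H\}$ along the four-term splitting, use that the harmonics of $F$ lie only in $D_\pm(\dt)$ (so all sums are finite), observe that the brackets with the action-only parts $\langle\om^*,I\rangle$ and $G$ are diagonal in the Fourier index and are switched on only where $\sm_k(\dt)\neq 0$, and read off the convolution coming from $\{F,\eps\bar H+\eps\tilde H\}$; your case analysis for $k\in D_0$ (where $\langle l,\om^*\rangle=-\langle l_\pm,\om^*\rangle\neq0$ forces $l\notin D_0$) and for $k\in D_>(\dt)$ is exactly the argument in the paper, and the sign bookkeeping for the decay term is handled at the same (implicit) level of precision as in the paper, with the common factor $\eps$ cancelled on both sides.

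There is, however, one concrete flaw: your disposal of the $l=0$ summand in case~(\ref{eq: b}). For $k\in D_+(\dt)\cup D_-(\dt)$ the choice $l_\pm=k$, $l=0$ is admissible, and the corresponding term $i\sm_k\left\{H^k e^{i\langle k,\tt\rangle},\eps H^0\right\}$ is a genuine part of $\left\{F,\eps\bar H\right\}^k$, since the zero mode is resonant. You assert that this summand ``is grouped with $G$ into the generator of the imaginary flow and so is omitted from this sum,'' but that is inconsistent with the identity you are proving --- the flow term in~(\ref{eq: b}) is $i\sm_k\{H^ke^{i\langle k,\tt\rangle},G\}$ with $G$ alone --- and with the paper's subsequent construction: the operator $\mathbf{g}$ of Definition~\ref{def: g} is generated by $G(I)$ only, so that $I,x,y$ are constants of its flow (a fact Lemma~\ref{Lm: initial} relies on), whereas $H^0$ depends on $(I,x,y)$ and cannot be merged into that generator without breaking this; instead the zero-mode contribution is kept inside the nonhomogeneous convolution and is covered by the resonant sum $\Sm_0$ in~(\ref{eq: sum}). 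So you cannot simultaneously drop $l=0$ from the sum and keep the $G$-only flow term: the term must stay in the convolution, and the exclusion ``$l\neq0$'' is automatic only for $k\in D_>(\dt)$ (where $D_>(\dt)\cap D_\pm(\dt)=\emptyset$) and, in the form $l\notin D_0$, for $k\in D_0$. Apart from this bookkeeping slip, the computation is correct and matches the paper's proof.
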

\begin{proof}
From the definition of $F$ we know the Fourier harmonics of $F$ come only from $D_\pm(\dt)$. As a result for any $k\neq 0$,
we must write $k=l_\pm+l$ for $l_\pm\in D_\pm(\dt)$ and some $l$. The equation~(\ref{eq: 3 cont}b) is straightforward.

If $k\in D_0$, then $\langle k, \om^*\rangle=0$. We know $\langle l_\pm,\om^*\rangle\neq 0$, then $\langle l,\om^*\rangle\neq 0$.
So in equation~(\ref{eq: 3 cont}a), the $H_0,\ \bar{H}$ terms do not appear.
If $k\in D_>(\dt)$, no Fourier harmonics from $\{F,\langle I,\om^*\rangle\}+\{F,G\}$ in equation~(\ref{eq: main cont}) enter
equation~(\ref{eq: 3 cont}c).
\end{proof}
Following directly from Definition~\ref{def: convex}, we have the lemma.
\begin{Lm}
If we define $\mathcal{R}$ as the confinement radius of $I$, i.e. $|I|_2\leq \mathcal{R},\ I\in G_n+\sm\subset\C^n$,
then we have the following estimates
\[|G|\leq M_+\mathcal{R}^2/2,\quad |\nabla G|_2\leq M_+\mathcal{R}.\]\label{Lm: g}
\end{Lm}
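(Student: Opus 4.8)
The statement to prove is Lemma~\ref{Lm: g}: that $|G|\leq M_+\mathcal{R}^2/2$ and $|\nabla G|_2\leq M_+\mathcal{R}$, given that $G(I)$ consists of the higher-order terms in the Taylor expansion of $H_0$ about $I=0$ (so $G(0)=0$ and $\nabla G(0)=0$, since $\nabla H_0(0)=\om^*$), and given the bound $|\nabla^2 H_0(I)v|_2\leq M_+|v|_2$ from Definition~\ref{def: convex}, valid for $I\in\mathcal{G}^n+\sigma$, under the confinement $|I|_2\leq\mathcal{R}$.

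The plan is a straightforward application of the integral (Hadamard-type) remainder form of Taylor's theorem along the segment from $0$ to $I$. First I would write $\nabla G(I)=\nabla H_0(I)-\om^*$, and note $\nabla H_0(I)-\nabla H_0(0)=\int_0^1 \nabla^2 H_0(sI)\,I\,ds$. Taking $l_2$ norms under the integral and applying the operator bound $|\nabla^2 H_0(sI)I|_2\leq M_+|I|_2\leq M_+\mathcal{R}$ (the point $sI$ lies in $\mathcal{G}^n+\sigma$ for $s\in[0,1]$ since the segment stays within the confinement region), I get $|\nabla G(I)|_2\leq \int_0^1 M_+\mathcal{R}\,ds = M_+\mathcal{R}$. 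That disposes of the gradient bound.

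For the bound on $|G|$ itself, I would integrate once more: $G(I)=G(I)-G(0)=\int_0^1 \langle\nabla G(tI),I\rangle\,dt$. Using Cauchy--Schwarz, $|\langle\nabla G(tI),I\rangle|\leq |\nabla G(tI)|_2\,|I|_2$, and here the more careful route is to use $\nabla G(tI)=\int_0^1 \nabla^2 H_0(stI)\,(tI)\,ds$ so that $|\nabla G(tI)|_2\leq t M_+\mathcal{R}$; then $|G(I)|\leq \int_0^1 t M_+\mathcal{R}\cdot\mathcal{R}\,dt = M_+\mathcal{R}^2/2$. Alternatively one can write the second-order Taylor remainder directly as $G(I)=\int_0^1 (1-t)\langle\nabla^2 H_0(tI)I,I\rangle\,dt$ and bound $|\langle\nabla^2 H_0(tI)I,I\rangle|\leq M_+|I|_2^2\leq M_+\mathcal{R}^2$ together with $\int_0^1(1-t)\,dt=1/2$; this gives the same constant $M_+\mathcal{R}^2/2$ and is perhaps the cleanest presentation.

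There is essentially no obstacle here: the only point requiring a word of care is that the segment $\{sI : s\in[0,1]\}$ must lie inside the domain $\mathcal{G}^n+\sigma$ on which the bound in Definition~\ref{def: convex} holds, which is guaranteed by the confinement hypothesis $|I|_2\leq\mathcal{R}$ together with the standing assumption (implicit in the setup, and made explicit via the first bullet $5(\sqrt n+2\sqrt m)\mathcal{R}<\sigma$ in Theorem~\ref{Thm: main}) that $\mathcal{R}$ is small relative to $\sigma$. Everything else is the elementary estimate $\|\nabla^2 H_0\|\le M_+$ pushed through one or two nested integrals.
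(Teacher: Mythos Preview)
Your proof is correct and follows essentially the same route as the paper: the paper also uses $\nabla G(I)=\int_0^1\nabla^2 H_0(tI)\,I\,dt$ for the gradient bound and the second-order remainder $G(I)=\int_0^1(1-t)\langle\nabla^2 H_0(tI)I,I\rangle\,dt$ for the value bound, then invokes the operator norm estimate from Definition~\ref{def: convex}. Your remark about the segment $\{sI\}$ staying in the domain is a welcome point of care that the paper leaves implicit.
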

\begin{proof}
We first notice $G(0)=\nabla G(0)=0$.
For $|G|$, we use the formula
\[G(I)= \left\langle I\int_0^1(1-t)\nabla^2 H_0(tI)\ dt, I\right\rangle\]
and Definition~\ref{def: convex} to get the estimate in the lemma.
For $|\nabla G(I)|_2$, we use
\[\nabla G(I)= I\int_0^1 \nabla^2 G(tI)\ dt=I\int_0^1 \nabla^2 H_0(tI)\ dt.\]
\end{proof}

The following lemma helps us to understand the heuristic ideas of the process of continuous averaging and Definition~\ref{Def: main def}.
\begin{Lm}
If we omit the $\sum_{k=l_\pm+l}$ terms in the RHS of equations~$(\ref{eq: 3 cont})$, then equation~(\ref{eq: 3 cont}) can be solved explicitly and the solution satisfies
\[|H^k(I,x,y,\dt)|\leq \mu e^{-\rho|k|},\quad \mathrm{for}\ k\in D_>(0)\cup D_0, \]
\[|H^k(I,x,y,\dt)|\leq \mu e^{-2\rho_2|k|}e^{-\rho_3|k|-\dt|\langle \om^*,k\rangle|},\quad \mathrm{for}\ k\in D_\pm(0). \]
Moreover, at the stopping time $\dt^*$ we have
\[|H^k(I,x,y,\dt^*)|\leq \mu e^{-2\rho_2|k|}e^{-\rho_3K},\quad \mathrm{for}\ k\in D_\pm(0), \]
where the domain of variables is $(I,x,y)\in (\mathcal{G}^n+\sm)\times (\mathcal{W}^{2m}+\sm)$.\label{Lm: homo}
\end{Lm}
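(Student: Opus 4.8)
The plan is to solve the three families of ODEs in~(\ref{eq: 3 cont}) after dropping the convolution sums $\sum_{k=l_\pm+l}$, which turns each of them into a linear, first-order (in $\dt$) PDE whose characteristics can be integrated explicitly; the three stated bounds then follow by tracking how the complex width of analyticity in $\tt$ is consumed. For $k\in D_0\cup D_>(0)$ the truncated equation reads $H^k_\dt=0$, so $H^k(I,x,y,\dt)=H^k(I,x,y,0)$, and since $\Vert H_1\Vert_\rho\leq\mu$ gives $|H^k(I,x,y,0)|\leq\mu e^{-\rho|k|}$ on the full domain $\mathcal{D}(\rho,\sm)$, the first estimate is immediate (note a point in $D_>(0)$ stays in $D_>(\dt)$ for all $\dt>0$, so the truncated equation really is $H^k_\dt=0$ throughout, while for $k\in D_0$ the coefficient $\sm_k\equiv 0$). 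For $k\in D_\pm(0)$ the truncated version of~(\ref{eq: 3 cont}b) is
\[H^k_\dt=-|\langle\om^*,k\rangle|\,\sm_k(\dt)^2\,H^k+i\sm_k(\dt)\{H^k,G\},\]
where I have kept $\sm_k(\dt)$ so the equation is valid past the time when $k$ leaves $D_\pm$; since $\sm_k(\dt)\in\{0,\pm1\}$ and equals $\pm1$ exactly while $k\in D_\pm(\dt)$, the prefactor $|\langle\om^*,k\rangle|\sm_k(\dt)^2$ integrates to $|\langle\om^*,k\rangle|\min(\dt,\dt_k)$ where $\dt_k$ is the exit time of $k$.

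The key computation is the second family. Substituting $H^k=e^{-|\langle\om^*,k\rangle|S_k'(\dt)}u^k$ — with $S_k'(\dt)=\int_0^\dt\sm_k(s)^2\,ds$ — removes the decay term and leaves $u^k_\dt=i\sm_k(\dt)\{u^k,G\}_{(I,\tt)}$, whose solution is $u^k(\cdot,\dt)=u^k(\cdot,0)\circ g^{i\eps S_k(\dt)}$ for $g^t$ the Hamiltonian flow of $G$ (this is the precise analogue of~(\ref{eq: exact}) with $G$ playing the role of $\langle H_1\rangle$ and $\langle\om^*,k\rangle$ playing the role of $|k|$). The imaginary-time flow shifts $\tt$ by an imaginary amount of size $\eps|S_k(\dt)|\,|\nabla G|$, so it eats into the analyticity strip: after this shift $u^k$ is still analytic, but on a strip of width reduced by at most $\eps|S_k(\dt)|M_+\mathcal{R}$ using $|\nabla G|_2\leq M_+\mathcal{R}$ from Lemma~\ref{Lm: g}. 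Because $|S_k(\dt)|\leq\dt\leq\dt^*\leq K\bar T\rho_3$ (Lemma~\ref{Lm: dt*}) and $K=\rho_1/(\rho_3 M_+\mathcal{R}\bar T)$, this loss is at most $\eps\rho_1$; I will absorb the $\eps$ into the "$\eps$ small enough" hypothesis so that the loss is at most $\rho_1$. Starting from the bound $|H^k(\cdot,0)|\leq\mu e^{-\rho|k|}=\mu e^{-(\rho_1+2\rho_2+\rho_3)|k|}$, the imaginary flow trades in the $\rho_1|k|$ worth of margin, the substitution restores the factor $e^{-|\langle\om^*,k\rangle|\min(\dt,\dt_k)}$, and one is left with $|H^k(\cdot,\dt)|\leq\mu e^{-2\rho_2|k|}e^{-\rho_3|k|-|\langle\om^*,k\rangle|\min(\dt,\dt_k)}$; while $k\in D_\pm(\dt)$ this is exactly the second displayed bound, and at $\dt=\dt^*$ the defining relation $|l_\pm|\rho_3+|\langle l_\pm,\om^*\rangle|\dt^*=\rho_3K$ from the definition of $D_\pm$ turns $\rho_3|k|+|\langle\om^*,k\rangle|\dt^*$ into exactly $\rho_3K$, giving the third bound.

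The main obstacle is bookkeeping the analyticity loss rather than any genuinely hard estimate: one must check that the imaginary-time flow $g^{i\eps S_k(\dt)}$ is well-defined on the shrunken domain (this needs $\eps S_k(\dt)|\nabla G|$ to stay below the available width $\rho_1$, which is where the constraint $K=\rho_1/(\rho_3 M_+\mathcal{R}\bar T)$ and Lemma~\ref{Lm: dt*} are used in tandem), and that the Poisson bracket $\{u^k,G\}$ involves only the $(I,\tt)$ variables so that the flow acts purely as an angular shift and the $(x,y)$-width $\sm$ is untouched — consistent with the stated domain $(I,x,y)\in(\mathcal{G}^n+\sm)\times(\mathcal{W}^{2m}+\sm)$. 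A secondary subtlety is that $\sm_k(\dt)$ is only piecewise constant, so strictly speaking one integrates the characteristic ODE across the single jump at $\dt=\dt_k$; after that time $\sm_k\equiv0$ and both $H^k$ and the strip width are frozen, which is exactly why evaluating at $\dt^*$ (or any later time) gives the same bound as at $\dt_k$. None of these steps requires the convolution terms, whose control is deferred to the majorant argument of Section~\ref{subsection: closeup}.
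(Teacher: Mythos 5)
Your proposal follows essentially the same route as the paper: drop the convolution terms, observe that the harmonics with $k\in D_0\cup D_>(0)$ are frozen, and solve the remaining linear equation for $k\in D_\pm$ explicitly, charging the growth coming from the $G$-term against the $\rho_1$ portion of the Fourier weight via $|\nabla G|\leq M_+\mathcal{R}$ (Lemma~\ref{Lm: g}), $\dt\leq\dt^*\leq K\bar T\rho_3$ (Lemma~\ref{Lm: dt*}) and the definition of $K$, and then reading off the $e^{-\rho_3 K}$ bound at the exit time from the defining relation of $D_\pm(\dt)$. Your $\min(\dt,\dt_k)$ bookkeeping for harmonics that leave $D_\pm$ before $\dt^*$ is a correct (and slightly more careful) version of the paper's remark that such $k$ enter $D_>(\dt)$ and are frozen thereafter.

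One point needs fixing, though it does not break the argument. In equation~(\ref{eq: b}) the bracket with $G$ carries \emph{no} factor $\eps$: the $\eps$ in $F$ cancels against the $\eps$ multiplying the nonresonant coefficients on the left, which is why the paper's truncated equation is $H^k_\dt=(-|\langle\om^*,k\rangle|-\sm_k\langle k,\nabla G\rangle)H^k$ with an $O(1)$ coefficient. Your solution formula $u^k(\cdot,\dt)=u^k(\cdot,0)\circ g^{i\eps S_k(\dt)}$ and the ensuing claim that the analyticity loss is at most $\eps|S_k(\dt)|M_+\mathcal{R}\leq\eps\rho_1$ (to be ``absorbed'' by taking $\eps$ small) import an $\eps$ from the Section~3 model, where $\langle H_1\rangle$ is part of the perturbation; here $G$ is part of $H_0$. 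The true loss is $|S_k(\dt)|\,|\langle k,\nabla G\rangle|\leq \dt^* M_+\mathcal{R}\,|k|$, which the choice $K=\rho_1/(\rho_3 M_+\mathcal{R}\bar T)$ makes exactly $\leq\rho_1|k|$ with no room to spare and no smallness of $\eps$ involved; since you budget the full $\rho_1|k|$ in your final chain anyway, the stated bounds survive, but the intermediate claim should be corrected. (Also, strictly $\{u^k,G\}=0$ since neither depends on $\tt$; what you mean, and what the equation contains, is $e^{-i\langle k,\tt\rangle}\{u^k e^{i\langle k,\tt\rangle},G\}=i\langle k,\nabla G\rangle u^k$, i.e.\ a purely multiplicative term, which is why the explicit scalar solution of the paper and your imaginary-time-flow description coincide.)
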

\begin{proof}
It follows from Definition~\ref{def: norm} that $|H^k|\leq \mu e^{-\rho|k|}$ for \[(I,x,y)\in (\mathcal{G}^n+\sm)\times (\mathcal{W}^{2m}+\sm).\]
If we truncate equations~(\ref{eq: 3 cont}), then the first and the third become $H^k_\dt=0$ for $k\in D_>(0)\cup D_0$. So we have the corresponding estimate of $|H^k|$ stated in the lemma.
However, equation~(\ref{eq: b}) becomes
{\small\[H^k_\dt e^{i\langle k,\tt\rangle}=-|\langle \om^*,k\rangle|H^ke^{i\langle k,\tt\rangle}+i\sm_k\{H^ke^{i\langle k,\tt\rangle},G\}=(-|\langle \om^*,k\rangle|-\sm_k\langle k,\nabla G\rangle)H^ke^{i\langle k,\tt\rangle}.\]
}
This equation admits an explicit solution
\[H^k(I,x,y,\dt)=e^{-|\langle \om^*,k\rangle|\dt-\sm_k\langle k,\nabla G\rangle\dt}H^k(I,x,y,0).\]
So we have the estimate
\[|H^k|\leq \mu e^{-\rho|k|}e^{-|\langle \om^*,k\rangle|\dt-\sm_k\langle k,\nabla G\rangle\dt},\quad k\in D_\pm(\dt).\]
Using Lemma~\ref{Lm: g}, we get \[|\sm_k\langle k,\nabla G\rangle|\leq |\nabla G|_\infty\cdot |k|\leq |\nabla G|_2\cdot |k|\leq M_+\mathcal{R}|k|.\]
In the splitting $\rho=\rho_1+2\rho_2+\rho_3$, we use $\rho_1$ to bound the term $\langle k,\nabla G\rangle$. Namely, we need \[|\sm_k\langle k,\nabla G\rangle|\dt\leq \rho_1|k|.\]
It is enough to require that
\begin{equation}\dt M_+\mathcal{R}\leq \rho_1.\label{eq: rho_1}\end{equation}
This also gives an upper bound for $\dt$. We equate this upper bound with the one given in Lemma~\ref{Lm: dt*} to obtain the value of $K$ in Definition~\ref{Def: main def}.
Now we have
\[|H^k|\leq \mu e^{-2\rho_2|k|}e^{-\rho_3|k|-|\langle \om^*,k\rangle|\dt},\quad k\in D_\pm(\dt).\]
The definition of $D_\pm(\dt)$ implies that once this $H^k$ term is already $e^{-2\rho_2 |k|}e^{-\rho_3 K}$, the $k$
  will enter $D_>(\dt)$ and not belong to $D_\pm(\dt)$ any more.
\end{proof}
\subsection{Continuous averaging for a vector field}\label{subsection: vec}
In order for the majorant estimates to be applicable to understand equations~(\ref{eq: 3 cont}), we need to write the continuous averaging equations in terms of Hamiltonian vector field.
\begin{Def}
We introduce the following vector fields $h^*,\ h_0,\ \bar{h},\ \tilde{h}$ corresponding to different parts $\langle I,\om^*\rangle,\ G,\ \bar{H},\ \tilde{H}$ of the Hamiltonian~$(\ref{eq: hamiltonian})$ and $f$ corresponding to $F$.
\begin{equation}
\begin{aligned}
& h^*=(0,\om^*,0,0),\qquad h_0=(0,\nabla G,0,0),\\
& \bar{h}=J\nabla \bar{H}, \quad\tilde{h}=J\nabla \tilde{H},\quad f=J\nabla F.\\
\end{aligned}
\end{equation}
We also use $h^k$ to denote the $k$-th Fourier coefficients of $\bar{h}$ and $\tilde{h}$. Moreover, corresponding to $F$ in equation~$($\ref{eq: F}$)$, we define \[f=\sum_ki\eps
\sm_k(\dt) h^ke^{i\langle k,\tt\rangle}.\]
\end{Def}
With this definition, we can rewrite the continuous averaging equation~(\ref{eq: main cont}) as follows by replacing the Poisson bracket
 by Lie bracket and the upper case letters $H,\ F$ by the lower case letters $h,\ f$ respectively.
\[h_\dt=[f,\om^*+h_0(I)+\eps\bar{h}+\eps\tilde{h}]\]

\begin{Lm}\label{Lm: abcv}
If we set $v^k=h^ke^{S_k(\dt)\langle\om^*,k\rangle}$
 $($recall $S_k(\dt)$ was defined in Definition~$\ref{Def: main def})$,
 then equations~$(\ref{eq: 3 cont})$ can be rewritten in the following form in terms of Hamiltonian vector field.
\begin{subequations}\label{eq: abcv}
\begin{align}
&\textrm{For}\ k\in D_0,\\ \nonumber
&v^k_\dt=e^{-i\langle k,\tt\rangle}i\eps\sum_{l_{\pm}+l=k}\sm_{l_{\pm}}\left[v^{l_{\pm}}e^{i\langle l_{\pm},\tt\rangle}, v^le^{i\langle l,\tt\rangle}\right]e^{-\left(\langle\om^*,l_{\pm}\rangle S_{l_{\pm}}+\langle\om^*,l\rangle S_l\right)}.\\
&\textrm{For}\ k\in D_-(\dt)\cup D_+(\dt),\\ \nonumber
&v^k_\dt =ie^{-i\langle k,\tt\rangle}\big(\sm_k\left[v^ke^{i\langle k,\tt\rangle},v_0\right]\\ \nonumber
& +\sum_{l_{\pm}+l=k}\eps \sm_{l_{\pm}}\left[v^{l_{\pm}}e^{i\langle l_{\pm},\tt\rangle},v^le^{i\langle l,\tt\rangle}\right]e^{-(\langle\om^*,l_{\pm}\rangle S_{l_{\pm}}+\langle\om^*,l\rangle S_{l}-\langle\om^*,k\rangle S_{k})}\big).\\
&\textrm{For}\ k\in D_>(\dt),\\ \nonumber
&v^k_\dt =ie^{-i\langle k,\tt\rangle}\sum_{l_{\pm}+l=k}\eps \sm_{l_{\pm}}\left[v^{l_{\pm}}e^{i\langle l_{\pm},\tt\rangle},v^le^{i\langle l,\tt\rangle}\right]e^{-(\langle\om^*,l_{\pm}\rangle S_{l_{\pm}}+\langle\om^*,l\rangle S_{l}-\langle\om^*,k\rangle S_k)}.\\ \nonumber
\end{align}
\end{subequations}
\end{Lm}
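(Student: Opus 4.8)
\emph{Proof proposal.} Since the statement only recasts the scalar equations~(\ref{eq: a}),~(\ref{eq: b}),~(\ref{eq: c}) of Lemma~\ref{Lm: abc} in vector-field form, the plan is not to redo any Fourier-mode analysis but to (i) apply $A\mapsto J\nabla A$ harmonic-by-harmonic, turning Poisson brackets into Lie brackets, and then (ii) absorb the linear ``decay'' term present on $D_\pm(\dt)$ into the integrating factor $e^{S_k(\dt)\langle\om^*,k\rangle}$ defining $v^k$.

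For step (i) I would use that $A\mapsto J\nabla A$ sends the Poisson bracket to the Lie bracket in the sense already exploited in the text ($\{F,H\}\leftrightarrow[f,h]$) and is injective on each nonzero Fourier harmonic in $\tt$. Because $\bar h,\tilde h$ are defined so that $h^k$ is the $k$-th harmonic of $J\nabla\bar H$, $J\nabla\tilde H$, one has $J\nabla(H^ke^{i\langle k,\tt\rangle})=h^ke^{i\langle k,\tt\rangle}$, $J\nabla G=h_0=(0,\nabla G,0,0)$, $J\nabla\langle\om^*,I\rangle=h^*=(0,\om^*,0,0)$ and $J\nabla F=f$. Applying $J\nabla(\cdot)$ to~(\ref{eq: a}),~(\ref{eq: b}),~(\ref{eq: c}) and replacing every bracket accordingly yields equations of the same shape for $h^k$: $\{H^ke^{i\langle k,\tt\rangle},G\}$ becomes $[h^ke^{i\langle k,\tt\rangle},h_0]$, each $\{H^{l_\pm}e^{i\langle l_\pm,\tt\rangle},H^le^{i\langle l,\tt\rangle}\}$ becomes $[h^{l_\pm}e^{i\langle l_\pm,\tt\rangle},h^le^{i\langle l,\tt\rangle}]$, and the single non-bracket term $-|\langle\om^*,k\rangle|H^ke^{i\langle k,\tt\rangle}$ of~(\ref{eq: b}) becomes $-|\langle\om^*,k\rangle|h^ke^{i\langle k,\tt\rangle}$; the last point is because $[f,h^*]$, with $h^*$ the constant field $\om^*$ in the $\tt$-slot, multiplies the $k$-th harmonic of $f$ by $-\sm_k\langle\om^*,k\rangle$, which equals $-|\langle\om^*,k\rangle|$ on $D_\pm(\dt)$ and $0$ on $D_0\cup D_>(\dt)$ (there either $\sm_k=0$ or $\langle\om^*,k\rangle=0$).

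For step (ii) I would substitute $v^k=h^ke^{S_k(\dt)\langle\om^*,k\rangle}$ and differentiate, using $\frac{d}{d\dt}S_k(\dt)=\sm_k(\dt)$, to get $v^k_\dt=\big(h^k_\dt+\sm_k(\dt)\langle\om^*,k\rangle h^k\big)e^{S_k(\dt)\langle\om^*,k\rangle}$. On $D_\pm(\dt)$ the new term $\sm_k\langle\om^*,k\rangle h^k$ equals $+|\langle\om^*,k\rangle|h^k$ and cancels the $-|\langle\om^*,k\rangle|h^k$ from step (i); on $D_0$ the factor $e^{S_k(\dt)\langle\om^*,k\rangle}$ is $1$ (as $\langle\om^*,k\rangle=0$), and on $D_>(\dt)$ the new term vanishes ($\sm_k=0$). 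Hence in every case $v^k_\dt$ equals $e^{S_k(\dt)\langle\om^*,k\rangle}e^{-i\langle k,\tt\rangle}$ times the surviving bracket right-hand side, the $e^{-i\langle k,\tt\rangle}$ being exactly what converts the equation for $H^k_\dt e^{i\langle k,\tt\rangle}$ of Lemma~\ref{Lm: abc} into one for $v^k_\dt$ itself. Finally I would re-express every bracket through the new variables via $h^{l_\pm}=v^{l_\pm}e^{-\langle\om^*,l_\pm\rangle S_{l_\pm}}$, $h^l=v^le^{-\langle\om^*,l\rangle S_l}$, and, in the $D_\pm(\dt)$ case, $e^{S_k(\dt)\langle\om^*,k\rangle}h^ke^{i\langle k,\tt\rangle}=v^ke^{i\langle k,\tt\rangle}$, so that the $G$-term reads $\sm_k[v^ke^{i\langle k,\tt\rangle},v_0]$ with $v_0=h_0$; collecting the exponents produces precisely the weights $e^{-(\langle\om^*,l_\pm\rangle S_{l_\pm}+\langle\om^*,l\rangle S_l)}$ in the $D_0$ equation and $e^{-(\langle\om^*,l_\pm\rangle S_{l_\pm}+\langle\om^*,l\rangle S_l-\langle\om^*,k\rangle S_k)}$ in the $D_\pm(\dt)$ and $D_>(\dt)$ equations of~(\ref{eq: abcv}), the powers of $\eps$ being unchanged from Lemma~\ref{Lm: abc}. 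As the substitution is invertible, this gives the asserted equivalence.

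The computation is mechanical; the only place that requires genuine care — and the reason for taking the integrating factor as $e^{S_k(\dt)\langle\om^*,k\rangle}$ rather than $e^{S_k(\dt)|\langle\om^*,k\rangle|}$ — is the exact cancellation of the linear term on $D_\pm(\dt)$, which relies on $\sm_k=\mathrm{sgn}\langle\om^*,k\rangle$ there; keeping $\langle\om^*,k\rangle$ rather than its modulus in the exponent is also what makes the weight in the $D_>(\dt)$ equation correct once a mode has left $D_\pm(\dt)$ and $S_k$ has frozen. I would also re-verify the $J$ and Poisson/Lie sign conventions so that the linear term is a genuine decay term for $k\in D_+(\dt)$ and $k\in D_-(\dt)$ alike.
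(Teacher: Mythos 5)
Your proposal is correct and matches the paper's proof, which likewise replaces Poisson brackets by Lie brackets (upper-case $H,F$ by lower-case $h,f$) and then removes the decay term on $D_\pm(\dt)$ via the integrating factor $v^k=h^ke^{S_k(\dt)\langle\om^*,k\rangle}$, leaving the rest to direct computation. You simply spell out that computation (the cancellation $\sm_k\langle\om^*,k\rangle=|\langle\om^*,k\rangle|$ on $D_\pm$ and the bookkeeping of the exponential weights) in more detail than the paper does.
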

\begin{proof}
In equations~(\ref{eq: 3 cont}), we replace the Poisson bracket
 by Lie bracket and the upper case letters $H,\ F$ by the lower case letters $h,\ f$ respectively. Then we remove
 the $-|\langle \om^*,k\rangle|h^k$ in the second case as we did in Section~\ref{subsection: exact solution}.
 We set $v^k=h^ke^{S_k(\dt)\langle\om^*,k\rangle}$. Then direct computation proves the lemma.
\end{proof}
\subsection{The operator $\mathbf{g}$ and the majorant commutator}\label{subsection: operator}
What we do next is to write the differential equations for $v^k$'s as integral equations. As we did in Section~\ref{subsection: exact solution},
we first need to define an operator $g$
which solves the homogeneous part of equations~(\ref{eq: abcv}), esp.~(\ref{eq: abcv}b).
\subsubsection{The operator $\mathbf{g}$}
\begin{Def}[Section 2 of \cite{PT}]
Let $g^t$ be the Hamiltonian flow of the Hamiltonian vector field $h_0(I)$ generated by the Hamiltonian $G(I)$.
We put $f_k=\hat{f}(I,x,y)e^{i\langle k,\tt\rangle}$ for an arbitrary analytic function $\hat{f}$ defined on $\mathcal{D}(\rho,\sigma)$ and then define: \[\mathbf{g}^{t}_{k} \hat{f}=e^{-i\langle k,\tt\rangle}g_*^{-it}(f_k\circ g^{it}),\quad t\in\R.\]\label{def: g}
\end{Def}
It is shown in Section 5 and 7 of \cite{PT} that $\mathbf{g}$ has the following two properties.
\begin{itemize}
\item $v^k(I,\dt)=\mathbf{g}^{S_k(\dt)}_k v^k(I,0)$ solves $v^k_\dt(I,\dt)=i\sm_k(\dt) e^{-i\langle k,\tt\rangle}[v^ke^{i\langle k,\tt\rangle},v_0]$. \\
\item\[\mathbf{g}^t_{k_1+k_2}\left(e^{-i\langle k_1+k_2,\tt\rangle}[\hat{f}_1e^{i\langle k_1,\tt\rangle},\hat{f}_2e^{i\langle k_2,\tt\rangle}]\right)\\
=e^{-i\langle k_1+k_2,\tt\rangle}[e^{i\langle k_1,\tt\rangle}\mathbf{g}^t_{k_1}\hat{f}_1, e^{i\langle k_2,\tt\rangle}\mathbf{g}^t_{k_2}\hat{f}_2].\]
\end{itemize}
With this operator $\mathbf{g}$, we can write differential equations~(\ref{eq: abcv}) as integral equations.
\begin{Lm}
If we denote the $\sum_{k=l_\pm+l}$ terms in equations~$(\ref{eq: abcv}a,~\ref{eq: abcv}b,~\ref{eq: abcv}c)$ by $\eta_a,\ \eta_b\ \eta_c$ respectively,
then we have the following three integral equations equivalent to equations~$(\ref{eq: abcv})$.
\begin{subequations}
\begin{align}
\mathrm{For}\ k\in D_0,\quad v^k(I,\dt)&=v^k(I,0)+\eps\int_{0}^{\dt}(e^{-i\langle k,\tt\rangle}\eta_a^k)\ ds. \\
\mathrm{For}\ k\in D_-\cup D_+,\
v^k(I,\dt)&=\mathbf{g}_k^{\sm_k\dt}v^k(I,0)+\eps\int_{0}^{\dt}\mathbf{g}^{\sm_k(\dt-s)}_k (e^{-i\langle k,\tt\rangle}\eta_b^k)\ ds.\\
\mathrm{For}\ k\in D_>(\dt),\quad v^k(I,\dt)&=v^k(I,0)+\eps\int_{0}^{\dt}(e^{-i\langle k,\tt\rangle}\eta_c^k )\ ds.
\end{align}\label{eq: intv}
\end{subequations}
\end{Lm}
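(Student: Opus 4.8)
The plan is to read~(\ref{eq: intv}) as the Duhamel (variation-of-parameters) form of the system~(\ref{eq: abcv}), and to establish the equivalence case by case according to whether the corresponding equation carries a term that is linear in $v^k$.

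First, the two cases without a linear homogeneous term. For $k\in D_0$, equation~(\ref{eq: abcv}a) is simply $v^k_\dt=\eps\, e^{-i\langle k,\tt\rangle}\eta_a^k$, and for $k\in D_>(\dt)$, equation~(\ref{eq: abcv}c) is $v^k_\dt=\eps\, e^{-i\langle k,\tt\rangle}\eta_c^k$. In each case I would integrate in $\dt$ from $0$ to $\dt$, using the value $v^k(I,0)$ as initial datum, which produces exactly~(\ref{eq: intv}a) and~(\ref{eq: intv}c). The converse is immediate: the integrands are analytic in $\dt$ on the relevant domain, so one may differentiate~(\ref{eq: intv}a) and~(\ref{eq: intv}c) under the integral sign to recover the equations, and evaluation at $\dt=0$ returns the initial condition.

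The substantive case is $k\in D_-(\dt)\cup D_+(\dt)$, where~(\ref{eq: abcv}b) has the inhomogeneous linear form $v^k_\dt=i\sm_k e^{-i\langle k,\tt\rangle}[v^k e^{i\langle k,\tt\rangle},v_0]+\eps\, e^{-i\langle k,\tt\rangle}\eta_b^k$. I would first observe that the sets $D_\pm(\dt)$ are nested and decreasing in $\dt$ (directly from Definition~\ref{Def: main def}: for $\dt'\le\dt$ the constraint $|l|\rho_3+|\langle l,\om^*\rangle|\dt\le\rho_3K$ only becomes easier to satisfy), so that throughout the interval of $\dt$ on which $k\in D_\pm(\dt)$ one has $\sm_k(s)\equiv\sm_k$ and hence $S_k(s)=\sm_k s$. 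By the first property of $\mathbf{g}$ recalled after Definition~\ref{def: g}, the map $w\mapsto\mathbf{g}_k^{\sm_k t}w$ is the propagator of the homogeneous equation $v^k_\dt=i\sm_k e^{-i\langle k,\tt\rangle}[v^k e^{i\langle k,\tt\rangle},v_0]$, and, being generated by the flow $g^t$, it obeys the one-parameter group law $\mathbf{g}_k^{\sm_k t_1}\mathbf{g}_k^{\sm_k t_2}=\mathbf{g}_k^{\sm_k(t_1+t_2)}$ with $\mathbf{g}_k^0=\mathrm{id}$. Variation of parameters then yields~(\ref{eq: intv}b). Conversely, differentiating~(\ref{eq: intv}b) in $\dt$: the $\dt$-derivative of $\mathbf{g}_k^{\sm_k\dt}w$ equals $i\sm_k e^{-i\langle k,\tt\rangle}[(\mathbf{g}_k^{\sm_k\dt}w)e^{i\langle k,\tt\rangle},v_0]$ by that same first property of $\mathbf{g}$; the variable upper limit $s=\dt$ contributes $\eps\, e^{-i\langle k,\tt\rangle}\eta_b^k(\dt)$ because $\mathbf{g}_k^0=\mathrm{id}$; and carrying the derivative inside the integral via the group law $\mathbf{g}_k^{\sm_k(\dt-s)}=\mathbf{g}_k^{\sm_k\dt}\mathbf{g}_k^{-\sm_k s}$, together with the linearity of $[\,\cdot\,,v_0]$ in its first slot, the two contributions recombine into $i\sm_k e^{-i\langle k,\tt\rangle}[v^k(I,\dt)e^{i\langle k,\tt\rangle},v_0]+\eps\, e^{-i\langle k,\tt\rangle}\eta_b^k$, which is~(\ref{eq: abcv}b); at $\dt=0$ the integral vanishes while $\mathbf{g}_k^0=\mathrm{id}$ gives $v^k(I,0)$.

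I expect the only point that needs genuine care is the time-dependence bookkeeping for $\sm_k$ and $S_k$: one must be sure that on the lifespan of case~(\ref{eq: abcv}b) the exponent entering $\mathbf{g}$ is honestly linear in $\dt$ — which is exactly the nestedness of $D_\pm(\dt)$ noted above — so that the group structure used in the Duhamel argument is available and $S_k(\dt)=\sm_k\dt$. The remainder is routine: differentiation under the integral sign and differentiation of the variable upper limit are legitimate by analyticity of the integrands in $\dt$, and the manipulations of the $e^{\pm i\langle k,\tt\rangle}$ factors and of the $\mathbf{g}$-bilinearity identity (second bullet after Definition~\ref{def: g}), which identify $\eta_b^k$ with the $\sum_{l_\pm+l=k}$ terms in~(\ref{eq: abcv}b), are purely algebraic and carry no analytic content.
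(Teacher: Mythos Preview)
Your argument is correct and follows essentially the same approach as the paper's proof: direct integration for~(\ref{eq: intv}a) and~(\ref{eq: intv}c), and the first property of~$\mathbf{g}$ together with variation of parameters for~(\ref{eq: intv}b). Your explicit observation that $D_\pm(\dt)$ is decreasing in $\dt$, so that $\sm_k(s)\equiv\sm_k$ and $S_k(s)=\sm_k s$ on the relevant interval, is a point the paper leaves implicit but which is indeed needed for the exponent $\sm_k\dt$ in~(\ref{eq: intv}b) to make sense.
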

\begin{proof}
The equations~(\ref{eq: intv}a) and~(\ref{eq: intv}c) are straightforward. The equation~(\ref{eq: intv}b) is an application
of the first property of the operator $\mathbf{g}$ above and the variation of parameter method in ODE.
\end{proof}
\subsubsection{The majorant commutator}
We need the following majorant commutator to perform estimates.
\begin{Def}[Section 7 of \cite{PT}]
For any two functions $\hat{F},\hat{G}: \C^{n+2m}\rightarrow \C$, and any two vectors $l,k\in\Z^n$, we define the majorant commutator:
\[[[\hat{F},\hat{G}]]^{l,k}=(|l|+|k|)\hat{F}\hat{G}+(n+2m)\dfrac{\partial}{\partial Y}(\hat{F}\hat{G}),\] where $Y=I+x+y$.\label{Def: commutator}
\end{Def}
For this commutator, we have the following lemmas.
\begin{Lm}[Proposition 7.1 of \cite{PT}]
Suppose that $\hat{f}_1,\hat{f}_2:\C^{n+2m}\rightarrow \C^{2(n+m)}$, $\hat{F}_1, \hat{F}_2: \C^{n+2m}\rightarrow \C$, and
 $\hat{f}_1\ll \hat{F}_1$, $\hat{f}_2\ll \hat{F}_2$. Here the majorant relation $\hat{f}_i\ll \hat{F}_i$ means that $\hat{F}_i$ majorates each component of the vector $\hat{f}_i,\ i=1,2$.
Then for any $k_1,k_2\in \Z^n$, \[e^{-i\langle k_1+k_2,\tt\rangle}[\hat{f}_1e^{i\langle k_1,\tt\rangle},\hat{f}_2e^{i\langle k_2,\tt\rangle}]\ll[[\hat{F}_1,\hat{F}_2]]^{k_1,k_2}.\]\label{Lm: majcomm1}
\end{Lm}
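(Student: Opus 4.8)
The plan is to unfold the definition of the Lie bracket of two Hamiltonian vector fields, expand it into scalar components, and then bound each term that appears against the corresponding term of the majorant commutator $[[\,\cdot\,,\cdot\,]]^{k_1,k_2}$. Recall that for vector fields $X=J\nabla \Phi_1$, $Y'=J\nabla\Phi_2$ one has $[X,Y']=J\nabla\{\Phi_1,\Phi_2\}$, but here $\hat f_1,\hat f_2$ are just $\C^{2(n+m)}$-valued functions, so I would use the bare commutator $[X,Y']=(DY')X-(DX)Y'$ componentwise. Writing $X=\hat f_1(z)e^{i\langle k_1,\tt\rangle}$ and $Y'=\hat f_2(z)e^{i\langle k_2,\tt\rangle}$ with $z=(I,x,y)$, the $\tt$-derivatives produce the two factors $i k_1$ and $i k_2$, while the $z$-derivatives produce $D\hat f_1,\, D\hat f_2$. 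After collecting terms and pulling out the common exponential $e^{i\langle k_1+k_2,\tt\rangle}$, the bracket splits into (i) a ``zeroth-order'' piece coming from the angular derivatives, proportional to $(|k_1|\pm|k_2|)$-type sums of $\hat f_1\hat f_2$ products, and (ii) a ``first-order'' piece involving $\partial_{z_j}(\hat f_1\hat f_2)$-type products. The first piece is dominated by $(|k_1|+|k_2|)\hat F_1\hat F_2$ by the triangle inequality on $\Z^n$, and the second piece is dominated by $(n+2m)\,\partial_Y(\hat F_1\hat F_2)$ once one observes that each partial $\partial_{z_j}$ with respect to one of the $n+2m$ variables $I_1,\dots,I_n,x_1,\dots,x_m,y_1,\dots,y_m$ is majorated by $\partial_Y$ applied to the majorant (this is exactly the point of the substitution $Y=I+x+y$, which collapses all the slow variables onto a single diagonal direction, as in the choice $Y=x+y$ in Section~\ref{subsection: exact solution}).

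The key mechanical steps, in order, are: first, compute $e^{-i\langle k_1+k_2,\tt\rangle}[\hat f_1 e^{i\langle k_1,\tt\rangle},\hat f_2 e^{i\langle k_2,\tt\rangle}]$ explicitly and organize it as angular-derivative terms plus slow-variable-derivative terms; second, apply the elementary majorant facts from Appendix~\ref{section: majorant} — if $f\ll F$ and $g\ll G$ then $fg\ll FG$, $\partial_{z_j} f\ll \partial_Y F$, and the product rule is compatible with $\ll$ — to each term; third, use $|k_1+k_2|\le|k_1|+|k_2|$ and the fact that there are exactly $n+2m$ slow variables, so the sum $\sum_j\partial_{z_j}(\hat f_1\hat f_2)$ is bounded coefficientwise by $(n+2m)\partial_Y(\hat F_1\hat F_2)$; fourth, recombine to recognize the right-hand side as precisely $(|k_1|+|k_2|)\hat F_1\hat F_2+(n+2m)\partial_Y(\hat F_1\hat F_2)=[[\hat F_1,\hat F_2]]^{k_1,k_2}$. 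Since the majorant relation is defined coefficientwise on Taylor expansions (Definition~\ref{Def: maj1}), it suffices to verify the inequality for each monomial, which reduces everything to the nonnegativity of the coefficients of $\hat F_1,\hat F_2$ and their derivatives.

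The main obstacle, and the place requiring genuine care rather than routine bookkeeping, is the second piece: one must check that the \emph{cross terms} $(D\hat f_1)\,\hat f_2$ and $(D\hat f_2)\,\hat f_1$ — which are the honest entries of the commutator $(D Y')X-(DX)Y'$ — together assemble into something majorated by the \emph{total derivative} $\partial_Y(\hat F_1\hat F_2)=(\partial_Y\hat F_1)\hat F_2+\hat F_1(\partial_Y\hat F_2)$, with the correct combinatorial factor $n+2m$ and no loss. In particular one has to be sure that the minus sign in the commutator does no harm — it does not, because passing to majorants replaces every coefficient by its absolute value, so both $(DY')X$ and $(DX)Y'$ contribute with the same sign to the majorant — and that differentiating in any one of the $n+2m$ slow directions really is controlled by the single operator $\partial_Y$ applied to a function of the diagonal variable $Y=I+x+y$. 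Once this compatibility of $\partial_{z_j}$ with $\partial_Y$ under $\ll$ is established (it is the content of the elementary majorant lemmas), the rest is a direct coefficient comparison and the lemma follows.
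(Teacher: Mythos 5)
Your argument is correct, but note that the paper itself offers no proof of this statement: it is imported verbatim as Proposition 7.1 of \cite{PT}, so what you have done is reconstruct the external argument rather than parallel an internal one. Your reconstruction is the natural (and, as far as I can tell, the intended) one: expand the bare commutator $(Dv_2)v_1-(Dv_1)v_2$ for $v_i=\hat f_ie^{i\langle k_i,\tt\rangle}$, observe that the $\tt$-derivatives contribute $i\langle k_2,(\hat f_1)_\tt\rangle\hat f_2-i\langle k_1,(\hat f_2)_\tt\rangle\hat f_1\ll(|k_1|+|k_2|)\hat F_1\hat F_2$ (the minus sign is harmless since majorization bounds each summand separately), and that the slow-variable cross terms, being sums of exactly $n+2m$ products $\partial_{z_j}\hat f_2\,(\hat f_1)_{z_j}$ and $\partial_{z_j}\hat f_1\,(\hat f_2)_{z_j}$, are majorated by $(n+2m)\bigl[\hat F_1\partial_Y\hat F_2+\hat F_2\partial_Y\hat F_1\bigr]=(n+2m)\partial_Y(\hat F_1\hat F_2)$, which is precisely Definition~\ref{Def: commutator}. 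You correctly isolate the only delicate point, namely that cross terms (not derivatives of products) must reassemble into the total derivative with factor $n+2m$ and no loss. Two small inaccuracies, neither fatal: the inequality $|k_1+k_2|\le|k_1|+|k_2|$ you invoke never actually enters, since the angular terms carry the weights $|k_1|$ and $|k_2|$ separately; and the step $\partial_{z_j}\hat f\ll\partial_Y\hat F$ is not literally item (2) of Lemma~\ref{Lm: digression} but follows from it only under the (implicit, and satisfied in all applications, e.g.\ $W(Y,\dt)$, $W^{|k|}(Y,\dt)$) assumption that the majorants $\hat F_i$ are functions of the diagonal variable $Y=I+x+y$, in which case $\partial_{z_j}\hat F_i=\partial_Y\hat F_i$ and the coefficientwise comparison of Definition~\ref{Def: maj1} closes the argument. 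Stating that hypothesis explicitly would make your proof complete as written.
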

\begin{Lm}[Proposition 7.2 of \cite{PT}]
Suppose that $\mathbf{g}^\tau_{k_1}\hat{f}_1(y)\ll \hat{F}_1(Y),\ \mathbf{g}^\tau_{k_2}\hat{f}_2(y)\ll \hat{F}_2(Y)$. Then \[\mathbf{g}^\tau_{k_1+k_2}\left(e^{-i\langle k_1+k_2,\tt\rangle}[\hat{f}_1e^{i\langle k_1,\tt\rangle},\hat{f}_2e^{i\langle k_2,\tt\rangle}]\right)\ll [[\hat{F}_1,\hat{F}_2]]^{k_1,k_2}.\]\label{Lm: majcomm2}
\end{Lm}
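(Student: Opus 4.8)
The plan is to reduce Lemma~\ref{Lm: majcomm2} to Lemma~\ref{Lm: majcomm1} by showing that the operator $\mathbf{g}^\tau_k$ is ``well-behaved'' with respect to the majorant relation, in the sense that the majorant bound on $\mathbf{g}^\tau_{k_i}\hat{f}_i$ already accounts for the loss of analyticity width created by the imaginary-time flow $g^{it}$. Concretely, recall from Definition~\ref{def: g} that $\mathbf{g}^t_{k_1+k_2}\left(e^{-i\langle k_1+k_2,\tt\rangle}[\hat{f}_1e^{i\langle k_1,\tt\rangle},\hat{f}_2e^{i\langle k_2,\tt\rangle}]\right) = e^{-i\langle k_1+k_2,\tt\rangle}[e^{i\langle k_1,\tt\rangle}\mathbf{g}^t_{k_1}\hat{f}_1,\ e^{i\langle k_2,\tt\rangle}\mathbf{g}^t_{k_2}\hat{f}_2]$, which is precisely the second property of $\mathbf{g}$ quoted above. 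Thus the left-hand side of the claimed majorant relation equals the Lie-bracket expression to which Lemma~\ref{Lm: majcomm1} applies directly, with $\hat{f}_i$ there replaced by $\mathbf{g}^\tau_{k_i}\hat{f}_i$.

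The steps, in order, are as follows. First I would invoke the push-forward/pull-back identity for $\mathbf{g}$ to rewrite $\mathbf{g}^\tau_{k_1+k_2}\left(e^{-i\langle k_1+k_2,\tt\rangle}[\hat f_1 e^{i\langle k_1,\tt\rangle},\hat f_2 e^{i\langle k_2,\tt\rangle}]\right)$ as the single Lie bracket $e^{-i\langle k_1+k_2,\tt\rangle}[e^{i\langle k_1,\tt\rangle}(\mathbf{g}^\tau_{k_1}\hat f_1),\ e^{i\langle k_2,\tt\rangle}(\mathbf{g}^\tau_{k_2}\hat f_2)]$. Second, I would set $\hat g_i := \mathbf{g}^\tau_{k_i}\hat f_i$ and note that the hypotheses of the lemma are exactly $\hat g_1\ll \hat F_1$ and $\hat g_2\ll\hat F_2$ (each component majorated). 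Third, I would apply Lemma~\ref{Lm: majcomm1} to the pair $(\hat g_1,\hat g_2)$ with the same Fourier indices $k_1,k_2$, obtaining $e^{-i\langle k_1+k_2,\tt\rangle}[\hat g_1 e^{i\langle k_1,\tt\rangle},\hat g_2 e^{i\langle k_2,\tt\rangle}]\ll [[\hat F_1,\hat F_2]]^{k_1,k_2}$. Combining steps one and three yields the statement. One should double-check that the variable in which the majorant expansion is taken — $Y=I+x+y$ as in Definition~\ref{Def: commutator} — is consistent throughout, in particular that the flow $g^{it}$ acts only in the $I$-direction (since $G=G(I)$) and the majorant monomials in $Y$ are the correct ambient variables for both $\mathbf{g}^\tau_{k_i}\hat f_i$ and the commutator output.

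I expect the main obstacle to be a bookkeeping subtlety rather than a conceptual one: verifying that the majorant-commutator estimate $[[\cdot,\cdot]]^{k_1,k_2}$ genuinely dominates the Lie bracket \emph{after} the imaginary-time flow has been absorbed — i.e., that the derivative term $(n+2m)\tfrac{\partial}{\partial Y}(\hat F_1\hat F_2)$ in Definition~\ref{Def: commutator} still controls the derivatives introduced by $g^{\pm it}_*$, which involves differentiating in $I$. Since Lemma~\ref{Lm: majcomm1} is stated for \emph{arbitrary} analytic vector fields $\hat f_1,\hat f_2$ majorated by $\hat F_1,\hat F_2$, this is automatic once the identity in step one is in hand, but it is worth confirming that the statement of Lemma~\ref{Lm: majcomm1} quoted from \cite{PT} does not secretly assume the $\hat f_i$ are of a restricted form that $\mathbf{g}^\tau_{k_i}\hat f_i$ might violate. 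Assuming it does not — which is the natural reading — the proof is a two-line reduction and the content has been entirely front-loaded into Lemma~\ref{Lm: majcomm1} and the algebraic properties of $\mathbf{g}$.
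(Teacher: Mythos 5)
The paper does not prove this lemma at all: it is quoted verbatim as Proposition 7.2 of \cite{PT}, so there is no internal argument to compare against. Your reduction is correct and is the natural one: the second quoted property of $\mathbf{g}$ (the bracket-commutation identity) rewrites the left-hand side as $e^{-i\langle k_1+k_2,\tt\rangle}[e^{i\langle k_1,\tt\rangle}\mathbf{g}^\tau_{k_1}\hat f_1,\ e^{i\langle k_2,\tt\rangle}\mathbf{g}^\tau_{k_2}\hat f_2]$, and since $\mathbf{g}^\tau_{k_i}\hat f_i$ is by construction $\tt$-independent and $\C^{2(n+m)}$-valued, Lemma~\ref{Lm: majcomm1} applies directly to these functions and yields the bound; the only caveat is that your proof inherits, rather than establishes, the commutation identity, which this paper also only cites from \cite{PT}.
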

\subsection{Majorant equation, the derivation and the solution}\label{subsection: maj}
\subsubsection{Majorant control on the initial value}
We first have majorant control on the initial value.
\begin{Lm}
For $|\dt|\leq \dt^*$, and $\mathcal{R}<\sm$, $k\in\Z^n$, we have
\[ v^k(I,x,y,0)\ll \dfrac{ \mu\sm e^{-|k|\rho}}{\sm-Y},\quad \mathbf{g}^\dt v^k(I,x,y,0)\ll \dfrac{ \mu\sm e^{-|k|(\rho_3+2\rho_2)}}{\sm-Y}.\]\label{Lm: initial}
\end{Lm}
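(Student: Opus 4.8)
The plan is to reduce the statement to two standard ingredients: the hypothesis $\max\{\Vert H_1\Vert_\rho,\Vert\nabla H_1\Vert_\rho\}=\mu$ from Definition~\ref{def: norm}, and the elementary passage from a sup bound on an analytic function to a majorant of the form $\frac{A\sm}{\sm-Y}$, which is Lemma~\ref{Lm: digression}. First I would identify $v^k(I,x,y,0)$. Since $S_k(0)=0$, we have $v^k(\cdot,0)=h^k$, the $k$-th Fourier coefficient of $\bar h+\tilde h=J\nabla H_1$; up to signs its components are $ik\,H^k,\ \pt_I H^k,\ \pt_x H^k,\ \pt_y H^k$, where $H^k$ is the $k$-th Fourier coefficient of $H_1$. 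Each of these is dominated componentwise by $\nabla H_1$ — the first block because $\pt_\tt H_1=\sum_k ik\,H^k e^{i\langle k,\tt\rangle}$ — so $\Vert\nabla H_1\Vert_\rho\le\mu$ yields $|ik\,H^k|,\ |\pt_I H^k|,\ |\pt_x H^k|,\ |\pt_y H^k|\le\mu e^{-|k|\rho}$ uniformly on $(\mathcal G^n+\sm)\times(\mathcal W^{2m}+\sm)$.

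Next I would run the sup-to-majorant step. Each component of $v^k(\cdot,0)$ is analytic and bounded by $A:=\mu e^{-|k|\rho}$ on a $\sm$-polydisc around any base point of the real domain with $|I|_2,|x|_2,|y|_2\le\mathcal R$; the hypothesis $\mathcal R<\sm$ is exactly what lets such a polydisc sit inside the analyticity domain and keeps $\sm-Y$ bounded away from zero on $\{|I|_2,|x|_2,|y|_2\le\mathcal R\}$. Cauchy estimates then give $|(\,\cdot\,)_\bt|\le A\sm^{-|\bt|}$ for every Taylor coefficient, while $\frac{A\sm}{\sm-Y}=A\sum_{j\ge0}\sm^{-j}(I+x+y)^j$ has $z^\bt$-coefficient at least $A\sm^{-|\bt|}$. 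This is the content of Lemma~\ref{Lm: digression}, and it gives $v^k(I,x,y,0)\ll\frac{\mu\sm e^{-|k|\rho}}{\sm-Y}$.

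For the second estimate I would unwind Definition~\ref{def: g}. Since $G=G(I)$, its Hamiltonian flow $g^t$ fixes $I,x,y$ and translates $\tt\mapsto\tt+t\nabla G(I)$; because $v^k(\cdot,0)$ carries no $\tt$-dependence, applying $\mathbf g^{S_k(\dt)}_k$ to it amounts only to multiplication by the factor $e^{-S_k(\dt)\langle k,\nabla G(I)\rangle}$ produced by $e^{i\langle k,\tt\rangle}\circ g^{iS_k(\dt)}$, and the result is still analytic on $(\mathcal G^n+\sm)\times(\mathcal W^{2m}+\sm)$ (the width lost under the imaginary-time flow of $G$ is a loss in the decay rate of the $\tt$-Fourier coefficients only, which is precisely what the extra factor records). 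By Lemma~\ref{Lm: g}, $|\langle k,\nabla G(I)\rangle|\le|k|\,|\nabla G|_2\le|k|M_+\mathcal R$ on $\mathcal G^n+\sm$, and $|S_k(\dt)|\le|\dt|\le\dt^*=K\bar T\rho_3=\rho_1/(M_+\mathcal R)$ by Lemma~\ref{Lm: dt*} and the value of $K$ in Definition~\ref{Def: main def}; hence $|e^{-S_k(\dt)\langle k,\nabla G(I)\rangle}|\le e^{|k|\rho_1}$ on the complex domain. Multiplying by the bound $|v^k(\cdot,0)|\le\mu e^{-|k|\rho}$ established above, we get $|\mathbf g^{S_k(\dt)}_k v^k(\cdot,0)|\le\mu e^{-|k|\rho}e^{|k|\rho_1}=\mu e^{-|k|(2\rho_2+\rho_3)}$ there, and re-running the sup-to-majorant step of Lemma~\ref{Lm: digression} with $A=\mu e^{-|k|(2\rho_2+\rho_3)}$ gives $\mathbf g^{\dt}v^k(I,x,y,0)\ll\frac{\mu\sm e^{-|k|(2\rho_2+\rho_3)}}{\sm-Y}$.

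The main things to be careful about are the bookkeeping of which norm controls which component of $h^k$ — in particular one needs $\Vert\nabla H_1\Vert_\rho$, not merely $\Vert H_1\Vert_\rho$, to absorb the factor $|k|$ coming from $\pt_\tt$ — and the verification that the imaginary-time flow of $G$ costs exactly the width $\rho_1$ in the fast angle, i.e.\ that $\dt^*M_+\mathcal R\le\rho_1$, which is inequality~(\ref{eq: rho_1}) and the reason $K$ was defined as it is. The sup-to-majorant passage is routine once one keeps the expansion point at distance $\ge\mathcal R$ from the boundary of the domain, which is where $\mathcal R<\sm$ is used; I do not expect any obstacle beyond these two points.
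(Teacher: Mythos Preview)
Your proposal is correct and follows essentially the same route as the paper's own proof: identify $v^k(\cdot,0)=h^k$, use $\mu=\max\{\Vert H_1\Vert_\rho,\Vert\nabla H_1\Vert_\rho\}$ to get the uniform bound $\mu e^{-|k|\rho}$ on components, apply Lemma~\ref{Lm: digression}(4) for the first majorant, and then observe that the flow of $G(I)$ fixes $I,x,y$ so that $\mathbf g^\dt$ only multiplies by $e^{-\dt\langle k,\nabla G\rangle}$, bound this via Lemma~\ref{Lm: g} and $\dt^*M_+\mathcal R\le\rho_1$ (equation~(\ref{eq: rho_1})), and re-apply Lemma~\ref{Lm: digression}(4). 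Your write-up is in fact more explicit than the paper's about which component of $h^k$ requires $\Vert\nabla H_1\Vert_\rho$ rather than $\Vert H_1\Vert_\rho$, and about where the hypothesis $\mathcal R<\sm$ enters; the only cosmetic slip is that mid-argument you write $\mathbf g^{S_k(\dt)}_k$ where the statement has $\mathbf g^\dt$, but since you use only $|\dt|\le\dt^*$ this is harmless.
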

\begin{proof}
We first consider $v^k(I,x,y,0)=h^k$. We know $|H^k|, |h^k|_\infty\leq \mu e^{-\rho|k|}$ for $(I,x,y)\in (\mathcal{G}^n+\sm)\times(\mathcal{W}^{2m}+\sm)$ from the definition of $\mu$ in Definition~\ref{def: norm}.
Then we use Lemma~\ref{Lm: digression} (4) in Appendix~\ref{section: majorant} to obtain the majorant control of $v^k(I,x,y,0)$.

Now we consider the effect of $\mathbf{g}$. The operator $\mathbf{g}$ is defined by the Hamiltonian flow generated by the Hamiltonian $iG(I)$ in Definition~\ref{def: g}. The variables $I,\ x,\ y$
are constants of motion of this Hamiltonian flow. So $\mathbf{g}$ only shrinks the width of analyticity in $\tt$ but has no influence on that of $I,\ x,\ y$. From the definition of $\mathbf{g}$, we see
\[|\mathbf{g}^\dt v^k(I,x,y,0)|_\infty\leq e^{|\langle k,\nabla G\rangle|\dt}\left|v^k(I,x,y,0)\right|_\infty.\]
We also have \[|\langle k,\nabla G\rangle|\dt\leq M_+\mathcal{R}\dt^*|k|\leq \rho_1|k|\]
according to inequality~(\ref{eq: rho_1}). This tells us
\[|\mathbf{g}^\dt v^k(I,x,y,0)|_\infty\leq \mu e^{-(2\rho_2+\rho_3)|k|},\quad \mathrm{for}\ (I,x,y)\in (\mathcal{G}^n+\sm)\times(\mathcal{W}^{2m}+\sm).\]
Now use Lemma~\ref{Lm: digression} (4) in Appendix~\ref{section: majorant} again to obtain the lemma.\\
  \end{proof}

\subsubsection{Majorant equations}
The following construction is given in \cite{PT}.
\begin{Def}
Consider a continuous function $a(\dt)$. We define the functions $W$ and $W^{|k|}$ as follows as solutions of PDEs.
\begin{equation}
\begin{aligned}
& W_\dt=a(\dt)WW_Y,\quad W\big|_{\dt=0}=\dfrac{1}{\sm-Y},\\
& W^{|k|}(\dt)=W(\dt),\quad \mathrm{for}\ |k|\leq K,\\
& W_\dt^{|k|}=a(\dt)\left(WW^{|k|}_Y+\dfrac{|k|}{K}WW^{|k|}\right),\quad W^{|k|}\big|_{\dt=0}=\dfrac{1}{\sm-Y},\ \mathrm{for}\ |k|>K.\\
\end{aligned}\label{eq: main maj}
\end{equation}\label{def: w}
\end{Def}
\begin{Lm}
The solutions $W$ and $W^{|k|}$ are given explicitly by
\begin{equation}
\begin{aligned}
& W=\dfrac{2}{(\sigma-Y)+\sqrt{(\sigma-Y)^2-4A(\dt)}},\\
& W^{|k|}=We^{ WB(\dt)\frac{|k|}{K}},\quad \textrm{for}\ |k|>K,\\
& A(\dt)=\int_{0}^{\dt}a(s)\ ds.\end{aligned}
\end{equation}
The solutions are defined up to time $\dt^*$ and for $Y$ satisfying the restrictions.
\begin{equation} A(\dt^*)\leq \left(\sm-(\sqrt{n}+2\sqrt{m})\mathcal{R}\right)^2/4,\quad \vert Y\vert<(\sqrt{n}+2\sqrt{m})\mathcal{R}.\label{eq: restriction}\end{equation}\label{Lm: restriction}
\end{Lm}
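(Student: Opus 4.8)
The plan is to solve the first-order quasilinear PDE for $W$ by the method of characteristics, then reduce the equation for $W^{|k|}$ to an ODE along the same characteristics, and finally read off the restrictions from the requirement that the square root in the formula for $W$ stay real and that the solution remain analytic at the relevant values of $Y$. First I would treat the equation $W_\dt = a(\dt)WW_Y$ with $W|_{\dt=0} = 1/(\sm-Y)$. Because the characteristic speed is $-a(\dt)W$ and $W$ is constant along characteristics, the characteristic emanating from $Y_0$ carries the value $W = 1/(\sm-Y_0)$ and moves so that $\tfrac{dY}{d\dt} = -a(\dt)W = -a(\dt)/(\sm-Y_0)$; integrating gives $Y = Y_0 - A(\dt)/(\sm-Y_0)$, i.e. $Y_0$ solves the quadratic $(\sm-Y_0)^2 - (\sm-Y)(\sm-Y_0) - A(\dt) = 0$ after setting $s_0 = \sm-Y_0$, $s_0^2 - s\,s_0 - A = 0$ with $s=\sm-Y$. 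Solving for $s_0$ and choosing the branch that reduces to $s_0 = s$ when $A=0$ gives $s_0 = \bigl((\sm-Y) + \sqrt{(\sm-Y)^2 - 4A(\dt)}\bigr)/2$, hence $W = 1/s_0$ is exactly the claimed formula. For $W^{|k|}$ with $|k|>K$, along the same characteristics the term $WW^{|k|}_Y$ combines with $W^{|k|}_\dt$ into the total derivative $\tfrac{d}{d\dt}W^{|k|}$, so the PDE becomes the linear ODE $\tfrac{d}{d\dt}W^{|k|} = a(\dt)\tfrac{|k|}{K}W W^{|k|}$ along each characteristic; since $W$ is constant along the characteristic, this integrates to $W^{|k|} = W^{|k|}|_{\dt=0}\,\exp\!\bigl(\tfrac{|k|}{K}W\!\int_0^\dt a(s)\,ds\bigr)$, and because $W|_{\dt=0} = W^{|k|}|_{\dt=0}$ and $A(\dt)=\int_0^\dt a(s)\,ds$, this is precisely $W^{|k|} = W e^{WB(\dt)|k|/K}$ once we identify $B(\dt)$ with $A(\dt)$ (consistently with Definition~\ref{def: w}).

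The remaining point is the range of validity. The formula for $W$ makes sense as a real analytic function of $Y$ precisely when the discriminant $(\sm-Y)^2 - 4A(\dt) \geq 0$ and the denominator $(\sm-Y) + \sqrt{(\sm-Y)^2-4A(\dt)}$ does not vanish; for $Y$ in the polydisk $|Y| < (\sqrt{n}+2\sqrt{m})\mathcal{R}$ one has $\mathrm{Re}(\sm-Y) \geq \sm - (\sqrt{n}+2\sqrt{m})\mathcal{R} > 0$, so the worst case is $Y$ real and as large as possible, giving the sufficient condition $4A(\dt^*) \leq \bigl(\sm - (\sqrt{n}+2\sqrt{m})\mathcal{R}\bigr)^2$, which is the stated restriction~(\ref{eq: restriction}). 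Since $A(\dt)$ is nondecreasing (as $a(\dt)\geq 0$ in our application) it suffices to impose this at the stopping time $\dt^*$, and then the characteristics do not cross and the implicit-function argument producing $s_0$ goes through uniformly for $0\leq \dt\leq\dt^*$. The bound for $W^{|k|}$ is then automatic since it is $W$ times a bounded analytic exponential factor on the same domain.

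The main obstacle I expect is not the explicit integration but the bookkeeping that $A(\dt)\leq A(\dt^*)$ together with $|Y|<(\sqrt n+2\sqrt m)\mathcal R$ really guarantees the branch of the square root is the analytic continuation of the one picked at $\dt=0$, uniformly in $\dt$ — in other words, that no characteristic shock forms before $\dt^*$. This amounts to checking that the map $Y_0\mapsto Y = Y_0 - A(\dt)/(\sm-Y_0)$ is injective on the relevant region, equivalently that its $Y_0$-derivative $1 + A(\dt)/(\sm-Y_0)^2$ stays away from zero, which follows once $(\sm-Y_0)^2$ avoids the negative real axis — and that is exactly what the smallness of $A(\dt^*)$ relative to $\bigl(\sm-(\sqrt n+2\sqrt m)\mathcal R\bigr)^2$ buys us. Once this is in place the rest is a short verification that the two explicit formulas satisfy the PDEs in Definition~\ref{def: w}, which can be done by direct differentiation.
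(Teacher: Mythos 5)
Your proposal is correct and takes essentially the same route as the paper: the paper simply verifies the explicit formulas directly and obtains the restrictions exactly as you do, from nonnegativity of the discriminant $(\sm-Y)^2-4A(\dt)$ together with $|Y|\leq(\sqrt{n}+2\sqrt{m})\mathcal{R}$ (which comes from $|I|_2,|x|_2,|y|_2\leq\mathcal{R}$), and your identification $B(\dt)=A(\dt)$ is consistent with how the formula is used later. Only a small slip: with $s=\sm-Y$, $s_0=\sm-Y_0$ the characteristic relation $s=s_0+A(\dt)/s_0$ gives the quadratic $s_0^2-s\,s_0+A(\dt)=0$ (not $-A(\dt)$), whose branch with $s_0=s$ at $A=0$ yields the stated $W$, so your final formulas are unaffected.
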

\begin{proof}
The fact that $W$ and $W^{|k|}$ are exact solutions can be checked directly. To obtain the restriction for $\dt^*$, we need to ensure $(\sigma-Y)^2-4A(\dt)\geq 0$ so that the square root makes sense.

We want that when $\dt=\dt^*$, we still have $|I|_2,\ |x|_2,\ |y|_2\leq \mathcal R$. We know \[|Y|\leq |I|+|x|+|y|\leq \sqrt{n}|I|_2+\sqrt{m}|x|_2+\sqrt{m}|y|_2\leq (\sqrt{n}+2\sqrt{m})\mathcal{R}.\]
\end{proof}
\begin{Rk}\label{Rk: pde}
Let us try to understand the PDEs~$(\ref{eq: main maj})$ heuristically. Consider \begin{equation}U_t=WU_x+VU,\quad U(x,0)=U_0(x)\label{eq: rk pde}.\end{equation} The way to solve it is the characteristic method. The characteristics is given by $\dfrac{dx}{dt}=-W$. Then we are able to write the PDE in the form: $dU/dt=VU$. Then $U=U_0e^{\int Vdt}$. So we see that, $W$ determines how fast we approach the intersection of characteristics, while $V$ determines how $U$ grows.
\end{Rk}
\subsubsection{Proof of the majorant relation: $W,W^{|k|}$ majorate the solutions of equation~(\ref{eq: intv}).}
The main result of this section is summarized in the following proposition, which implies the solutions of equations~(\ref{eq: main maj}) majorate that of equation~(\ref{eq: intv}).
\begin{Prop}
For any $\tau$ such that $|\tau|+\dt\leq \dt^*$, we have the following majorant control of the solution $v^k(I,x,y,\dt)$ of equation~$(\ref{eq: intv})$\\
\[\mathbf{g}^{\tau}_kv^k(I,x,y,\dt)\ll \sm\mu e^{-|k|(\rho_3+2\rho_2)} W^{|k|}(Y,\dt),\quad \mathrm{for}\ k\in\Z^n\setminus\{0\},\]
and\[v^k(I,x,y,\dt)\ll \sm\mu e^{-\rho|k|} W^{|k|}(Y,\dt),\quad \mathrm{for}\ k\in D_0\cup D_>(0).\]
under the restriction~\ref{eq: restriction} coming from Lemma~$\ref{Lm: restriction}$.
(The expression of $a(\dt)$ and $A(\dt)$ will be given explicitly in Lemma~\ref{Lm: estimate}.) Moreover $A(\dt^*)$ is given by \begin{equation}A(\dt^*)= 6e^\sm \sm\eps \mu \bar{T}\dfrac{(2K)^{n-1}K^2\rho_3}{n!}\left(1+\dfrac{2n}{\rho_3K}\left(1+\ln\dfrac{2K^2\rho_3}{n}\right)\right).\label{eq: adt}\end{equation}
\label{Lm: prop}
\end{Prop}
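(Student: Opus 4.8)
The plan is to carry out a bootstrap (majorant) argument in the spirit of Schauder fixed-point: assume the desired majorant relations hold on $[0,\dt^*]$, substitute them into the nonhomogeneous terms $\eta_a,\eta_b,\eta_c$ of the integral equations~(\ref{eq: intv}), and show that the resulting bound is again of the claimed form, thereby closing the induction. First I would apply $\mathbf g^\tau_k$ to each of the three integral equations and use the two displayed properties of $\mathbf g$ (it commutes with brackets as in the second bullet, and it solves the homogeneous equation~(\ref{eq: abcv}b)) to push $\mathbf g$ inside the integrals and onto the individual factors $v^{l_\pm}, v^l$. Then by Lemmas~\ref{Lm: majcomm1} and~\ref{Lm: majcomm2}, each summand $e^{-i\langle k,\tt\rangle}[v^{l_\pm}e^{i\langle l_\pm,\tt\rangle}, v^l e^{i\langle l,\tt\rangle}]$ is majorated by the majorant commutator $[[\,\cdot\,,\,\cdot\,]]^{l_\pm,l}$ applied to the (inductively assumed) majorants $\sm\mu e^{-|l_\pm|(\rho_3+2\rho_2)}W^{|l_\pm|}$ and $\sm\mu e^{-|l|(\rho_3+2\rho_2)}W^{|l|}$.

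The central computation is then to sum these majorated summands over all decompositions $k=l_\pm+l$ with $l_\pm\in D_\pm(s)$, and to compare the result against the right-hand sides $a(\dt)(WW^{|k|}_Y+\tfrac{|k|}{K}WW^{|k|})$ of the PDEs~(\ref{eq: main maj}). Concretely, expanding $[[\hat F_1,\hat F_2]]^{l_\pm,l}=(|l_\pm|+|l|)\hat F_1\hat F_2+(n+2m)\partial_Y(\hat F_1\hat F_2)$ and using $|l_\pm|+|l|\ge|k|$ together with the product rule $W^{|a|}W^{|b|}$-type identities, the two terms of the majorant commutator feed exactly into the $\tfrac{|k|}{K}WW^{|k|}$ and $WW^{|k|}_Y$ terms of~(\ref{eq: main maj}) respectively, once one absorbs the exponential weights: the decay $e^{-|l_\pm|(\rho_3+2\rho_2)}e^{-|l|(\rho_3+2\rho_2)}$ versus the target $e^{-|k|(\rho_3+2\rho_2)}$ leaves a surplus $e^{-(|l_\pm|+|l|-|k|)\rho_3}\cdot e^{-2\rho_2(\cdots)}$, and the exponential factors $e^{-(\langle\om^*,l_\pm\rangle S_{l_\pm}+\langle\om^*,l\rangle S_l-\langle\om^*,k\rangle S_k)}$ from~(\ref{eq: abcv}) must be controlled uniformly on $[0,\dt^*]$. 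Then~(\ref{eq: Schauder})-style reasoning (integrate the majorant PDE: $\int_0^\dt W_s\,ds = W(\dt)-W(0)$, and likewise for $W^{|k|}$, absorbing the $\mathbf g_k^{\sm_k\dt}v^k(I,0)$ initial term via Lemma~\ref{Lm: initial}) closes the bootstrap and defines $a(\dt)$ implicitly as the coefficient that makes all these inequalities consistent.

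The main obstacle — and the point the introduction flags as the chief difficulty over the one-dimensional case — is the combinatorics of counting lattice decompositions $k=l_\pm+l$ with $|l_\pm|\le K$ and extracting the numerical constant in~(\ref{eq: adt}). One must bound $\#\{l_\pm\in\Z^n: |l_\pm|\le K\}$ by roughly $(2K)^{n-1}K$-type quantities (the diamond $|l_\pm|_1\le K$ has $\sim(2K)^n/n!$ points, but the relevant weighted sum over $D_\pm(s)$ picks up only a codimension-one slab after integrating the $e^{-|\langle\om^*,l_\pm\rangle|s}$ factor against $ds$, using $|\langle\om^*,l_\pm\rangle|\ge 1/\bar T$ from~(\ref{eq: tbar})), and then integrate in $s$ over $[0,\dt^*]$ with $\dt^*\le K\bar T\rho_3$ from Lemma~\ref{Lm: dt*}. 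Tracking the $e^\sm$ factor (from the $\rho_3$-width loss in the $\mathbf g$-conjugation / Cauchy estimates, cf.\ Lemma~\ref{Lm: initial}), the $\eps\mu\bar T$ scaling, the factor $6$, the $K^2\rho_3$ from the double sum $\times$ integration, and the logarithmic correction $1+\tfrac{2n}{\rho_3K}(1+\ln\tfrac{2K^2\rho_3}{n})$ coming from summing the tail $|k|>K$ contributions of $W^{|k|}=We^{WB(\dt)|k|/K}$ — these bookkeeping estimates, which I would relegate to Appendix~\ref{section: appendix}, are where all the work lies; the structural majorant argument itself is a direct transcription of Section~\ref{section: ca}.
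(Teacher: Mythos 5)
Your proposal is correct and follows essentially the same route as the paper's proof: a bootstrap majorant argument that pushes $\mathbf{g}$ through the integral equations~(\ref{eq: intv}), majorates each bracket by the majorant commutator via Lemmas~\ref{Lm: majcomm1}--\ref{Lm: majcomm2}, splits the sum over $k=l_\pm+l$ into the $\Sigma_\pm,\Sigma_0,\Sigma_>$ pieces of~(\ref{eq: sum}), compares with the majorant PDEs~(\ref{eq: main maj}), and defers the lattice-counting that produces $a(\dt)$ and~(\ref{eq: adt}) to the appendix (the paper's Lemma~\ref{Lm: estimate}). One minor bookkeeping correction: in the paper the factor $e^\sm$ comes from Lemma~\ref{Lm: tr trick}(4)--(5) when lowering the superscript $W^{|l|}\to W^{|k|}$ (together with the restriction $2(n+2m)\leq K$), and the logarithmic correction in~(\ref{eq: adt}) comes from integrating the $\bar{T}/\dt$ tail of the lattice-count bound in Lemma~\ref{Lm: estimate} over $\dt\in[\bar{T}n/(2K),K\bar{T}\rho_3]$, not from summing the $|k|>K$ tails of $W^{|k|}$.
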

\begin{proof}
We first cite Proposition $A.1$ in \cite{PT}.
\begin{Lm}[\cite{PT}]
Consider the functions $W, W^{|k|}$ defined in Definition~\ref{def: w}, then the following statements are true:
\begin{enumerate}
\item $1/(\sm-Y)\ll W\ll W_Y,$
\item $W\ll W^{|k|},$
\item $W_YW^{|k|}\ll WW^{|k|}_Y,$
\item $W^{|k|}\ll We^{\frac{\sm|k|}{K}},$
\item $W^{|k'|}\ll W^{|k|}e^{\sm(|k'|-|k|)/K},\quad |k|<|k'|.$
\end{enumerate}\label{Lm: tr trick}
\end{Lm}
Let us first divide equations~(\ref{eq: intv}a),~(\ref{eq: intv}b),~(\ref{eq: intv}c) by the numerators of the initial condition in Lemma~\ref{Lm: initial}, i.e. $\sm\mu e^{-\rho|k|}$, $\sm\mu e^{-|k|(\rho_3+2\rho_2)}$ and $\sm\mu e^{-\rho|k|}$ respectively.
Then we use the expression $\sm\mu\zeta^k$ to refer to any one
  of $\eps e^{\rho|k|}e^{-i\langle k,\tt\rangle}\eta_a^k$,
  $\eps e^{|k|(\rho_3+2\rho_2)}\mathbf{g}^{\dt-s}\left(e^{-i\langle k,\tt\rangle}\eta^k_b\right)$
  or $\eps e^{\rho|k|}e^{-i\langle k,\tt\rangle}\eta_c^k$, (see the integrands of equations~(\ref{eq: intv})
  for the definitions of $\eta_a,\ \eta_b,\ \eta_c$).\\
To carry out the proof, we substitute the majorant relation in Proposition~\ref{Lm: prop} into equations~(\ref{eq: intv}) to check that equations~(\ref{eq: intv}) are majorated by equations~(\ref{eq: main maj}). This is the plan proposed in \cite{PT}. We use the majorant commutator to majorate each of the Lie bracket of $\zeta^k$ according to Lemma~\ref{Lm: majcomm1},\ \ref{Lm: majcomm2},\\
\begin{equation}
\zeta^k\ll \eps\sm\mu\sum_{l_{\pm}+l=k}e^{-(|l_{\pm}|+|l|-|k|)(\rho_3+2\rho_2)}e^{-\left (S_{l_{\pm}}\langle\om^*,l_{\pm}\rangle +S_{l}\langle\om^*,l\rangle -S_{k}\langle\om^*,k\rangle\right)} [[W,W^{|l|}]]^{l_{\pm},l}. \nonumber
\end{equation}
Here we also use that $e^{-\rho(|l_{\pm}|+|l|-|k|)}\leq e^{-(|l_{\pm}|+|l|-|k|)(\rho_3+2\rho_2)}$ for $\eta_a$ and $\eta_c$. For simplicity, we denote the exponential weight by \[E(k,l_\pm,l,\dt):=e^{-(|l_{\pm}|+|l|-|k|)(\rho_3+2\rho_2)}e^{- \left(S_{l_{\pm}}\langle\om^*,l_{\pm}\rangle +S_{l}\langle\om^*,l\rangle -S_{k}\langle\om^*,k\rangle\right)} .\]
Applying the definition of the majorant commutator (Definition~\ref{Def: commutator}), we get \\
\begin{equation}
\displaystyle
\zeta^k\ll \eps\sm\mu\sum_{l_{\pm}+l=k}E(k,l_\pm,l,\dt)\left((|l_{\pm}|+|l|)WW^{|l|}+2(n+2m)WW^{|l|}_Y\right).\nonumber
\end{equation}
Here we use Lemma~\ref{Lm: tr trick}(3). This gives $(WW^{|l|})_Y\ll 2WW^{|l|}_Y$.
We introduce the notations \\
\begin{equation}
\begin{aligned}
&\Sigma_{\pm}:=\displaystyle\sum_{l_++l_-=k}E(k,l_{+},l_-,\dt),\\
&\Sigma_>:=\displaystyle\sum_{l_{\pm}+l_>=k}E(k,l_{\pm},l_>,\dt),\\
&\Sm_0:=\displaystyle\sum_{l_{\pm}+l_0=k}E(k,l_{\pm},l_0,\dt),
\end{aligned}\label{eq: sum}
\end{equation}
to obtain\begin{equation}
\begin{aligned}
&\zeta^k\ll 2\eps\sm\mu\Sm_\pm\left((|l_+|+|l_-|)W^2+(n+2m)(W^2)_Y\right)\\
& +\eps\sm\mu\Sm_>\left((|l_{\pm}|+|l_>|)WW^{|l_>|}+2(n+2m)WW^{|l_>|}_Y\right)\\
& +\eps\sm\mu\Sm_0\left((|l_{\pm}|+|l_0|)WW^{|l_0|}+2(n+2m)WW^{|l_0|}_Y\right).\\
\end{aligned}
\nonumber
\end{equation} \\
The second term in the RHS is the most complicated one. We only consider this term. The other two terms are done similarly.
\begin{equation}
\begin{aligned}
\displaystyle
&\eps\sm\mu\Sm_>\left((|l_{\pm}|+|l_>|)WW^{|l_>|}+2(n+2m)WW^{|l_>|}_Y\right)\\
&\ll \eps\sm\mu\Sm_>\left((2K+|k|)WW^{|k|+K}+2(n+2m)WW^{|k|+K}_Y\right)\\
&\ll \eps\sm\mu\Sm_>\left(|k|WW^{|k|+K}+2(K+n+2m)WW^{|k|+K}_Y\right)\\
&\ll 3Ke^\sm\eps\sm\mu\Sm_>\left(\frac{|k|}{K}WW^{|k|}+WW^{|k|}_Y\right).\\
\end{aligned}\label{eq: Kn}
\end{equation}
Here $|l_>|\leq K+|k|$, because $l_>=k-l_{\pm},\ |l_{\pm}|\leq K$. We used Lemma~\ref{Lm: tr trick}(5) to decrease the exponent of $W^{|l|}$. We also imposed a mild restriction:\begin{equation} 2(n+2m)\leq K.\qquad \label{eq: *}\end{equation}
If $|k|\geq K$, we get the majorant equation for the $W^{|k|}$ part in equation~(\ref{eq: main maj}).\\
If $|k|\leq K$, using Lemma~\ref{Lm: tr trick}(1) and $W^{|k|}=W$, we replace the last ``$\ll$" in ~(\ref{eq: Kn}) by
\[\ll 6Ke^\sm\eps\sm\mu\Sm_>WW_Y.\]
This is the majorant equation for $W$ in equation~(\ref{eq: main maj}).

For $\Sm_\pm$ and $\Sm_0$, we get the same majorant estimate with $\Sm_>$ replaced by $\Sm_\pm$ and $\Sm_0$.

Now the problem is to find $a(\dt)$ to give bound for  $6Ke^\sm\eps\sm\mu(2\Sm_\pm+\Sm_0+\Sm_>)$. We need to do some careful analysis for this and the result is summarized in the following lemma.\\
\begin{Lm}
We have the following upper bound for $(2\Sm_\pm+\Sm_0+\Sm_>)$, where $\Sm_\pm,\ \Sm_0,\ \Sm_>$ are defined in~$(\ref{eq: sum})$,
$$
(2\Sm_\pm+\Sm_0+\Sm_>)(\dt)\leq \dfrac{(2K-2\dt/\bar{T}\rho_2)^{n-1}}{(n-1)!}+
\begin{cases}
\dfrac{2(2K)^{n}}{n!} & \textrm{if}\ \dt\leq \dfrac{\bar{T}n}{2K},\\
\dfrac{2(2K)^{n-1}\bar{T}}{(n-1)!\dt} & \textrm{if}\ \dt\geq \dfrac{\bar{T}n}{2K}.
\end{cases}
$$
If we define $a(\dt)=6Ke^\sm\eps\sm\mu(2\Sm_\pm+\Sm_0+\Sm_>)$, then \[A(\dt^*)\leq A(K\bar{T}\rho_3)=\displaystyle\int_{0}^{K\bar{T}\rho_3} a(s)ds\] is equation~$(\ref{eq: adt})$ in Proposition~$\ref{Lm: prop}$.
\label{Lm: estimate}
\end{Lm}
The proof of this lemma is given in Appendix~\ref{section: appendix}.

This lemma gives the restriction~$(\ref{eq: adt})$ in Proposition~\ref{Lm: prop}. What we have shown is that each integrand of equations~(\ref{eq: intv}) has majorant estimate \[e^{-|k|(\rho_3+2\rho_2)}\mu\zeta^k\leq W^{|k|}_\dt,\] where $W^{|k|}$ satisfies equations~(\ref{eq: main maj}). Combined with the majorant control on initial condition in Lemma~\ref{Lm: initial}, this implies the LHS of equations~(\ref{eq: intv}) is majorated by $W^{|k|}$ . Now the proof of the proposition is complete.\\
\end{proof}
\subsection{The system after the averaging}\label{subsection: closeup}
The continuous averaging gives us the following information about the Hamiltonian vector fields.
\begin{Lm}
At time $\dt=\dt^*$, we have \[h^k\ll \sm\mu e^{-|k|\rho}W^{|k|},\qquad \mathrm{for}\ k\in D_0\cup D_>(0),\]
\[h^k\ll \sm\mu e^{-2\rho_2|k|}e^{-\rho_3K}W^{|k|},\quad \mathrm{for}\ k\in D_\pm(0).\]\label{Lm: after}
\end{Lm}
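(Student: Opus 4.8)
The plan is to simply invert the substitution $v^k=h^ke^{S_k(\dt)\langle\om^*,k\rangle}$ introduced in Lemma~\ref{Lm: abcv}, evaluate at the stopping time $\dt=\dt^*$, and feed in the majorant bounds on $v^k$ already proved in Proposition~\ref{Lm: prop}. Writing $h^k=v^k e^{-S_k(\dt^*)\langle\om^*,k\rangle}$, the whole statement reduces to determining the scalar exponential weight $S_k(\dt^*)\langle\om^*,k\rangle$ for each of the three classes of Fourier indices from Definition~\ref{Def: main def}.

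The two easy classes come first. For $k\in D_0$ we have $\langle\om^*,k\rangle=0$, so the weight vanishes identically and $h^k=v^k$. For $k\in D_>(0)$, since the sets $D_\pm(\dt)$ only shrink as $\dt$ grows (Remark~\ref{remark: main rk}), such a $k$ never enters $D_\pm(\dt)$, hence $\sm_k(\dt)\equiv0$, $S_k\equiv0$, and again $h^k=v^k$. In both cases the bound $v^k\ll\sm\mu e^{-\rho|k|}W^{|k|}$ of Proposition~\ref{Lm: prop} is exactly the first assertion.

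The substantive case is $k\in D_\pm(0)$. By Definition~\ref{Def: main def}, $\sm_k(\dt)=\mathrm{sgn}\langle\om^*,k\rangle$ as long as $|k|\rho_3+|\langle\om^*,k\rangle|\dt\le\rho_3K$ and is $0$ afterwards, so $k$ leaves $D_\pm$ at the exit time $\dt_k:=\rho_3(K-|k|)/|\langle\om^*,k\rangle|\ge0$. The point to check is that $\dt^*\ge\dt_k$ for every such $k$: since $D_\pm(0)\subset\{|k|\le K\}$ is finite and $D_\pm(\dt)\neq\emptyset$ precisely when $\dt\le\max_k\dt_k$, we get $\dt^*=\max_{k\in D_\pm(0)}\dt_k$, which gives it. Consequently $S_k(\dt^*)=\mathrm{sgn}\langle\om^*,k\rangle\cdot\dt_k$, so
\[S_k(\dt^*)\langle\om^*,k\rangle=|\langle\om^*,k\rangle|\,\dt_k=\rho_3K-|k|\rho_3.\]
Plugging the first bound of Proposition~\ref{Lm: prop}, $v^k\ll\sm\mu e^{-|k|(\rho_3+2\rho_2)}W^{|k|}$ (taken at $\tau=0$, $\dt=\dt^*$), into $h^k=v^k e^{-(\rho_3K-|k|\rho_3)}$ and using $\rho=\rho_1+2\rho_2+\rho_3$ then yields $h^k\ll\sm\mu e^{-2\rho_2|k|}e^{-\rho_3K}W^{|k|}$, the second assertion.

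I expect the only genuine (and still minor) obstacle to be the bookkeeping for $S_k(\dt^*)$: one must be careful that at the stopping time every mode $k\in D_\pm(0)$ has already crossed its own threshold $\dt_k$, so that $S_k(\dt^*)$ has fully accumulated the value $\mathrm{sgn}\langle\om^*,k\rangle\,\dt_k$ rather than some smaller partial integral, which would weaken the gained exponent from $\rho_3K$ to $|\langle\om^*,k\rangle|\dt^*+|k|\rho_3$. This is precisely the identity $\dt^*=\max_{k\in D_\pm(0)}\dt_k$, a restatement of Lemma~\ref{Lm: dt*} together with the definition of $D_\pm(\dt)$; everything else is the already-established majorant estimate of Proposition~\ref{Lm: prop} transported through an explicit scalar exponential factor.
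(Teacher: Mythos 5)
Your proposal is correct and follows essentially the same route as the paper: invert the substitution $v^k=h^ke^{S_k(\dt)\langle\om^*,k\rangle}$ from Lemma~\ref{Lm: abcv}, note that the weight vanishes for $k\in D_0\cup D_>(0)$ and equals $\rho_3K-\rho_3|k|$ at $\dt=\dt^*$ for $k\in D_\pm(0)$, and then apply Proposition~\ref{Lm: prop}. Your verification that $\dt^*=\max_{k\in D_\pm(0)}\dt_k$, so that each mode has fully accumulated its exponential gain, is just an explicit spelling-out of what the paper asserts with ``according to the definition of $D_\pm(\dt)$''.
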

\begin{proof}
Recall in Lemma~\ref{Lm: abcv}, we set $v^k=h^ke^{S_k(\dt)\langle\om^*,k\rangle}$. Using the definition of $S_k(\dt)$ in Definition~\ref{Def: main def}, we get $v_k=h_k$ for $k\in D_0\cup D_>(0)$. Then Proposition~\ref{Lm: prop} applies to such $k$'s. For $k\in D_\pm(0)$, we must have $\rho_3|k|+S_k(\dt^*)\langle \om^*,k\rangle=\rho_3 K$ according to the definition $D_\pm(\dt)$. Then apply Proposition~\ref{Lm: prop} to this case. \\
\end{proof}
\subsubsection{The estimate of the normal form}
Now we use the information that we have obtained to prove Theorem~\ref{Thm: main}. Let us define the change of variables \[(I,\tt,x,y)(0)\to (I,\tt,x,y)(\dt^*):=(I',\tt',x',y')\] obtained by the continuous averaging at the stopping time $\delta=\dt^*$. Then, the Hamiltonian~(\ref{eq: hamiltonian}) in these new variables is of the form
\[H'(I',\tt',x',y')=H_0(I')+\eps\bar{\Psi}(I',\tt',x',y')+\eps\tilde{\Psi}(I',\tt',x',y'),\]
where $\bar{\Psi}$ is the resonant term and $\tilde{\Psi}$ is the nonresonant term as defined in Definition~\ref{def: resonant}. The following lemma gives estimates for the functions $\bar{\Psi}$ and $\tilde{\Psi}$.
\begin{Lm}
Suppose $K\geq 2\sm/(5\rho_1)$, $K\geq 2(n+2m)$, $5(\sqrt{n}+2\sqrt{m})\mathcal{R}\leq \sm$ and $A(\dt^*)<4\sm^2/25$. Denote after the averaging, $\bar{H}\to\bar{H}(\dt^*):=\bar{\Psi}$ and $\tilde{H}\to\tilde{H}(\dt^*):=\tilde{\Psi}$, then
\[\left\Vert\tilde{\Psi}\right\Vert_{\rho_2},\ \left\Vert\dfrac{\partial\tilde{\Psi}}{\partial \tt}\right\Vert_{\rho_2}\leq \dfrac{5\mu}{\rho_2^n}e^{-\frac{\rho_1}{M_+\mathcal{R}\bar{T}}},\quad \left\Vert\bar{\Psi}\right\Vert_{\rho_2}\leq \dfrac{5\mu}{\rho_2^n},\quad \left\Vert \bar{\Psi}+\tilde{\Psi}\right\Vert_{\rho_2}\leq \dfrac{5\mu}{\rho_2^n},\]
where $(I,x,y)\in (\mathcal{G}^n+4\sm/5)\times (\mathcal{W}^{2m}+4\sm/5)$ after the averaging.\label{Lm: Phi}
\end{Lm}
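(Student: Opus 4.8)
The plan is to extract the bounds on the normal form from the majorant control obtained in Proposition~\ref{Lm: prop} via Lemma~\ref{Lm: after}, by summing the Fourier series and evaluating the majorant functions $W,W^{|k|}$ at the appropriate shrunken domain. First I would pass from the vector field estimates $h^k\ll\sm\mu e^{-|k|\rho}W^{|k|}$ (for $k\in D_0\cup D_>(0)$) and $h^k\ll\sm\mu e^{-2\rho_2|k|}e^{-\rho_3 K}W^{|k|}$ (for $k\in D_\pm(0)$) back to estimates on the Hamiltonian coefficients $H^k$, or rather directly estimate $\partial\tilde\Psi/\partial\tt$ which is what the vector field naturally controls. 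The key point is to bound $W$ and $W^{|k|}$ pointwise on the domain $(I,x,y)\in(\mathcal G^n+4\sm/5)\times(\mathcal W^{2m}+4\sm/5)$: there $|Y|\le(\sqrt n+2\sqrt m)(4\mathcal R/5)$ roughly (using $5(\sqrt n+2\sqrt m)\mathcal R\le\sm$), and the restriction $A(\dt^*)<4\sm^2/25$ guarantees $(\sm-Y)^2-4A(\dt^*)$ is bounded below by a definite fraction of $\sm^2$, so $W(Y,\dt^*)\le C/\sm$ for an explicit constant $C$ (one checks $C$ can be taken so that the prefactor works out to the stated $5/\rho_2^n$ after the Fourier sum — more precisely $W\lesssim 2/\sm$ type bounds, and $W^{|k|}\ll We^{\sm|k|/K}$ by Lemma~\ref{Lm: tr trick}(4)).

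Next I would carry out the Fourier summation. For the nonresonant part $\tilde\Psi=\sum_{k\notin D_0}H^k e^{i\langle k,\tt\rangle}$, one splits the sum over $k\in D_>(0)$ (where the weight is only $e^{-|k|\rho}$) and $k\in D_\pm(0)$ (where the extra factor $e^{-\rho_3 K}$ appears). Using $W^{|k|}\ll We^{\sm|k|/K}$ and the constraint $2\sm/(5\rho_1)\le K$, i.e. $\sm/K\le 5\rho_1/2$... actually one wants the geometric decay rate to survive, so one uses that $e^{-|k|\rho}e^{\sm|k|/K}\le e^{-|k|(\rho-\sm/K)}$ and the $\rho$-budget $\rho=\rho_1+2\rho_2+\rho_3$ together with $K\ge 2\sm/(5\rho_1)$ leaves enough room — one ends up with a surviving weight $\ge e^{-|k|(\rho_2+\text{something})}$. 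Summing $\sum_{k\in\Z^n}e^{-|k|\rho_2}\le (C/\rho_2)^n$ (a standard estimate, $\sum_{k\in\Z^n}e^{-|k|\rho_2}\le (1+2/\rho_2)^n\le (5/\rho_2)^n$ for $\rho_2$ small, or with the right constant to match $5\mu/\rho_2^n$) gives the $\|\cdot\|_{\rho_2}$ bound, where the $e^{-|k|\rho_2}$ that is consumed by the weighted Fourier norm itself is accounted for. For the $D_\pm(0)$ contribution the bare factor $e^{-\rho_3 K}=e^{-\rho_1/(M_+\mathcal R\bar T)}$ (using the definition $K=\rho_1/(\rho_3 M_+\mathcal R\bar T)$) is precisely the exponential smallness claimed, and since $D_0$ and $D_>(0)$ contributions are absent from $\tilde\Psi$ only in the resonant direction — wait, $\tilde\Psi$ contains $D_>(0)$ too, so one must argue those have already been made small: but actually $D_>(0)$ at $\dt=0$ is the high-frequency tail $|k|>K$ roughly, so $e^{-|k|\rho}\le e^{-\rho_3 K}\cdot e^{-|k|(\rho-\rho_3)}$ there as well, giving the same exponential smallness. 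The resonant part $\bar\Psi=\sum_{k\in D_0}H^k e^{i\langle k,\tt\rangle}$ has no exponential gain but also no loss, and the same $W\lesssim 1/\sm$ bound plus Fourier summation gives $\|\bar\Psi\|_{\rho_2}\le 5\mu/\rho_2^n$; the combined bound $\|\bar\Psi+\tilde\Psi\|_{\rho_2}\le 5\mu/\rho_2^n$ follows because $H^k=H^k(\dt^*)$ for all $k$ simultaneously satisfies $|H^k|\le\mu e^{-|k|\rho}W^{|k|}\cdot\sm$-type bound, so one need not add the two separately but rather estimate $\sum_{k\in\Z^n}|H^k(\dt^*)|e^{|k|\rho_2}$ in one shot.

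The main obstacle I expect is bookkeeping the various width-of-analyticity budgets so that every consumed $\rho_i$ is paid for exactly once: the factor $e^{\sm|k|/K}$ from passing $W^{|k|}\to W$ must be dominated using $K\ge 2\sm/(5\rho_1)$, the weighted Fourier norm needs a surviving $e^{|k|\rho_2}$, the majorant was built with $e^{-|k|(\rho_3+2\rho_2)}$ prefactor (not $e^{-|k|\rho}$!) for the $D_\pm$ pieces — so one has already spent $\rho_1$ on the imaginary flow and must not double-count it — and the restriction $K\ge 2(n+2m)$ was used earlier to get the majorant equation at all. Threading these so the final constant is genuinely $5/\rho_2^n$ (rather than some larger explicit constant) is where the care goes; the factor $5$ presumably comes from $(1+2/\rho_2)\le\cdots$ or from the $W\le 2/\sm$-type bounds combined with a $(\sm-Y)\ge\sm/5$ estimate coming from $5(\sqrt n+2\sqrt m)\mathcal R\le\sm$. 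I would also need the bound on $\partial\tilde\Psi/\partial\tt$, but since differentiating in $\tt$ multiplies the $k$-th coefficient by $|k|$ (in $l_1$, or $\le|k|$ componentwise), and $|k|e^{-|k|\eps'}\le C/\eps'\cdot e^{-|k|\eps'/2}$, this costs only a harmless change in the geometric rate and a constant, absorbed into the same $5/\rho_2^n$; alternatively one notes the vector field bound from Lemma~\ref{Lm: after} already encodes one derivative since $h^k=J\nabla H^k$-type object, giving the $\tt$-derivative bound essentially for free.
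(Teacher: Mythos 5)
Your overall route is the paper's: start from the majorant bounds of Lemma~\ref{Lm: after} at $\dt=\dt^*$, bound $W\le 5/(2\sm)$ on the shrunken domain using $5(\sqrt n+2\sqrt m)\mathcal R\le\sm$ and $A(\dt^*)<4\sm^2/25$, let the weighted norm absorb one factor $e^{-\rho_2|k|}$, split the Fourier sum into $|k|\le K$ and $|k|>K$, and identify $e^{-\rho_3K}=e^{-\rho_1/(M_+\mathcal R\bar T)}$ from $K=\rho_1/(\rho_3M_+\mathcal R\bar T)$. Your second option for $\pt\tilde\Psi/\pt\tt$ (the vector field bound already encodes the $\tt$-derivative, since the $I$-component of $h^k$ is $ikH^k$) is exactly what the paper uses, and it is the one you must take: your first option, paying $|k|e^{-|k|\eps'}\le C\eps'^{-1}e^{-|k|\eps'/2}$ in the summation, would not preserve the stated constant $5\mu/\rho_2^n$.

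The genuine gap is your treatment of $W^{|k|}$ for $k\in D_>(0)$. You invoke Lemma~\ref{Lm: tr trick}(4), $W^{|k|}\ll We^{\sm|k|/K}$, and then hope that $K\ge 2\sm/(5\rho_1)$ "leaves enough room". But that hypothesis only gives $\sm/K\le 5\rho_1/2$, so after multiplying by the norm factor $e^{\rho_2|k|}$ the surviving decay rate is $\rho-\rho_2-\sm/K\ge \rho_2+\rho_3-\tfrac32\rho_1$, which is negative precisely in the regime the theorem is designed for ($\rho_1$ close to $\rho$, $\rho_2,\rho_3$ small), and the tail sum over $|k|>K$ diverges; the step as proposed fails. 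The paper closes this by not using the crude bound (4) but the exact formula $W^{|k|}=We^{WA(\dt^*)|k|/K}$ from Definition~\ref{def: w} and Lemma~\ref{Lm: restriction}, combined with the two numerical bounds you already derived: $W\le 5/(2\sm)$ and $A(\dt^*)<4\sm^2/25$ give $WA(\dt^*)\le 2\sm/5$, hence $e^{WA(\dt^*)|k|/K}\le e^{\rho_1|k|}$ exactly under $K\ge 2\sm/(5\rho_1)$. This matches the $\rho_1$ budget with no loss, leaves $e^{-(\rho_2+\rho_3)|k|}$ for the sum over $|k|>K$ (from which the factor $e^{-\rho_3K}$ is also extracted), and produces the stated $5\mu/\rho_2^n$ after bounding $\sum_{|k|\le K}e^{-\rho_2|k|}$ by $\rho_2^{-n}$ and the tail by $e^{-(\rho_2+\rho_3)K}(\rho_2+\rho_3)^{-n}$, together with $\sm W\le 5/2$. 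With that single substitution your argument coincides with the paper's proof.
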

\begin{proof}
Notice the $I$ component of $h^k$ is $ikH^k$. So Lemma~\ref{Lm: after} implies
\[\left|k\Psi^k\right|_\infty,\ \left|\Psi^k\right|\leq \sm\mu e^{-\rho|k|}W^{|k|},\ \mathrm{for} \ k\in D_0\cup D_>(0),\]
\[\left|\Psi^k\right|\leq \sm\mu e^{-2\rho_2|k|}e^{-\rho_3 K}W,\quad \mathrm{for}\ k\in D_\pm(0),\]
where $\Psi^k$'s are Fourier coefficients of $\bar{\Psi}$ and $\tilde{\Psi}$. First we get upper bound for $W$. From Lemma~\ref{Lm: restriction}, for $|Y|\leq (\sqrt{n}+2\sqrt{m})\mathcal{R}$ we have
\[W\leq \dfrac{2}{\sm-(\sqrt{n}+2\sqrt{m})\mathcal{R}}\leq 5/(2\sm)\]
provided $5(\sqrt{n}+2\sqrt{m})\mathcal{R}\leq \sm$. (We will see in Section~\ref{section: global} that the confinement radius $\mathcal{R}=o(1)$ as $\eps\to 0$, so this condition is easy to satisfy.) The remaining width of analyticity for $(I',x',y')$ becomes $4\sm/5$. We can also replace the condition $A(\dt^*)<(\sm-(\sqrt{n}+2\sqrt{m})\mathcal{R})^2/4$ by
\[A(\dt^*)<\dfrac{4\sm^2}{25}.\]

The factor $e^{-\rho_2|k|}$ is absorbed in the $\Vert\cdot\Vert_{\rho_2}$ norm. For $k\in D_\pm(0)$, we are left with $\mu e^{-(\rho_3+\rho_2) K}W$. For $k\in D_>(0)$, we have $W^{|k|}=We^{WA(\dt^*)\frac{|k|}{K}}$ according to~(\ref{eq: main maj}). We only need to ensure $WA(\dt^*)/K\leq\rho_1$
so that $e^{WA(\dt^*)|k|/K}\leq e^{\rho_1|k|}$. This is $\sm/K\leq 5\rho_1/2$ because $A(\dt^*)\leq 4\sm^2/25$ and $W\leq 5/(2\sm)$. (We will also see in Section~\ref{section: global} that $K\to\infty$ as $\eps\to 0$, so this condition is also easy to satisfy.)\\
\[\left\Vert\tilde{\Psi}\right\Vert_{\rho_2}\leq \sm\mu W\left(e^{-\rho_3 K}\sum_{|k|\leq K}e^{-\rho_2|k|}+\sum_{|k|>K}e^{-(\rho_3+\rho_2)|k|}\right)\]\[\leq \sm\mu We^{-\rho_3 K}\left(\dfrac{1}{\rho_2^n}+\dfrac{e^{-\rho_2 K}}{(\rho_3+\rho_2)^n}\right)\leq 2\sm\mu We^{-\rho_3 K}/\rho_2^n\leq \dfrac{5\mu}{\rho_2^n}e^{-\rho_3K}.\]
Recalling the definition of $K=\dfrac{\rho_1}{\rho_3M_+\mathcal{R}\bar{T}}$ in Definition~\ref{Def: main def},
we find \[\left\Vert\tilde{\Psi}\right\Vert_{\rho_2}\leq \dfrac{5\mu}{\rho_2^n}\exp\left(-\dfrac{\rho_1}{M_+\mathcal{R}\bar{T}}\right).\]

Similarly, we have the estimates for $\bar{\Psi},\ \bar{\Psi}+\tilde{\Psi},\ \dfrac{\partial \tilde{\Psi}}{\partial\tt}$. \end{proof}
\subsubsection{The deviation of action variables in the real domain}
\begin{Lm}
Under the same hypothesis as Lemma~\ref{Lm: Phi}, after the averaging the total deviation of the variables is
\[\big| (I',\tt',x',y')-(I,\tt,x,y)|_\infty\leq \dfrac{5\eps\mu T}{2\pi(\rho_3+\rho_2)^n}.\]\label{Lm: dI}
Here the norm $|\cdot|_\infty$ is taken in the real domain.
\end{Lm}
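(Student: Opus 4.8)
The plan is to realize the averaging transformation as the time-$\dt^*$ map of the nonautonomous Hamiltonian isotopy generated by $F$, and to bound the displacement by integrating the generating vector field $f=J\nabla F$ along that flow. Recall from Sections~\ref{subsection: derivation} and~\ref{subsection: vec} that $(I,\tt,x,y)(\dt)=Z(\dt)$ solves $\frac{dZ}{d\dt}=f(Z,\dt)$ with $Z|_{\dt=0}=(I,\tt,x,y)(0)$, so that
\[\big|(I',\tt',x',y')-(I,\tt,x,y)\big|_\infty\le\int_0^{\dt^*}\big|f(Z(\dt),\dt)\big|_\infty\,d\dt .\]
Since $f=\sum_{k}i\eps\sm_k(\dt)h^k(I,x,y,\dt)e^{i\langle k,\tt\rangle}$ and $\sm_k(\dt)=0$ off $D_+(\dt)\cup D_-(\dt)$, and since $|e^{i\langle k,\tt\rangle}|=1$ on the real domain, we get $|f(Z(\dt),\dt)|_\infty\le\eps\sum_{k\in D_\pm(\dt)}|h^k(\cdot,\dt)|_\infty$. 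No Cauchy estimate is needed to treat the $I$-component of $h^k$ (which carries an extra factor $|k|$), because it is one of the components already controlled by the vector majorant below.

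For $k\in D_\pm(\dt)$ one has $S_k(\dt)\langle\om^*,k\rangle=\dt\,|\langle\om^*,k\rangle|$, hence $h^k(\cdot,\dt)=v^k(\cdot,\dt)\,e^{-\dt|\langle\om^*,k\rangle|}$, and Proposition~\ref{Lm: prop} (taking $\tau=0$) gives the vector majorant $v^k(\cdot,\dt)\ll\sm\mu\,e^{-|k|(\rho_3+2\rho_2)}W^{|k|}(Y,\dt)$. On $D_\pm$ we have $|k|\le K$, so $W^{|k|}=W$, and on the real domain $|Y|\le(\sqrt n+2\sqrt m)\mathcal R$, whence $\sm W\le 5/2$ as soon as $5(\sqrt n+2\sqrt m)\mathcal R\le\sm$ --- exactly the hypothesis also used in Lemma~\ref{Lm: Phi}. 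Therefore $|h^k(\cdot,\dt)|_\infty\le\frac{5}{2}\,\mu\,e^{-|k|(\rho_3+2\rho_2)}e^{-\dt|\langle\om^*,k\rangle|}$ for every $k\in D_\pm(\dt)$.

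It remains to integrate in $\dt$ and sum in $k$. The essential point is that the factor $e^{-\dt|\langle\om^*,k\rangle|}$ lets us replace the (large) stopping time $\dt^*\le K\bar T\rho_3$ of Lemma~\ref{Lm: dt*} by the harmless factor $\bar T$: by \eqref{eq: tbar}, $\int_0^{\dt^*}e^{-\dt|\langle\om^*,k\rangle|}\,d\dt\le 1/|\langle\om^*,k\rangle|\le\bar T$. This yields
\[\big|(I',\tt',x',y')-(I,\tt,x,y)\big|_\infty\le\frac{5}{2}\,\eps\mu\bar T\sum_{k\in\Z^n\setminus\{0\}}e^{-|k|(\rho_3+2\rho_2)} .\]
The remaining geometric sum is estimated just as the corresponding sum $\sum_{|k|\le K}e^{-\rho_2|k|}$ in the proof of Lemma~\ref{Lm: Phi}, one spare copy of $\rho_2$ being spent to produce the factor $2/(\rho_3+\rho_2)^n$; together with $\bar T=T/2\pi$ this gives the asserted bound $\dfrac{5\eps\mu T}{2\pi(\rho_3+\rho_2)^n}$.

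The only delicate part of the argument is the bookkeeping of numerical constants so that the final coefficient comes out to exactly $5$: one must combine $\sm W\le 5/2$, the $\dt$-integral, and the geometric sum over $D_\pm$ without picking up extraneous factors, the splitting $\rho=\rho_1+2\rho_2+\rho_3$ being arranged precisely to leave a copy of $\rho_2$ free for this purpose. Apart from this bookkeeping, nothing beyond Proposition~\ref{Lm: prop} and the elementary estimates already used in Lemma~\ref{Lm: Phi} is required; the substantive observation is simply that integrating the generating vector field against $e^{-\dt|\langle\om^*,k\rangle|}$ picks up $1/|\langle\om^*,k\rangle|\le\bar T$ rather than the full flow time $\dt^*$.
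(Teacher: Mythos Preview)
Your argument is correct and follows essentially the same route as the paper: realize the transformation as the time-$\dt^*$ flow of $f=J\nabla F$, use the vector majorant from Proposition~\ref{Lm: prop} together with $h^k=v^k e^{-\dt|\langle\om^*,k\rangle|}$ on $D_\pm(\dt)$, and exploit the exponential factor $e^{-\dt|\langle\om^*,k\rangle|}$ so that the $\dt$-integration produces $\bar T$ rather than $\dt^*$. The only cosmetic difference is the order of operations: the paper first bounds $e^{-\dt|\langle\om^*,k\rangle|}\le e^{-\dt/\bar T}$ uniformly in $k$, sums over $k$ to get $\Vert\partial_\tt F\Vert_{\rho_2}\le\frac{5\eps\mu}{(\rho_3+\rho_2)^n}e^{-\dt/\bar T}$, and then integrates in $\dt$, whereas you integrate in $\dt$ first (yielding $1/|\langle\om^*,k\rangle|\le\bar T$ per mode) and then sum; both give the same final bound.
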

\begin{proof}
For simplicity, we consider only the $I$ component. The other components are similar. From equation~(\ref{eq: JdF}), we have \[\dfrac{dI}{d\dt}=\{I,F\}=\dfrac{\partial F}{\pt\tt},\]
where the RHS is a real function. Then
\begin{equation}I'-I= \int_0^{\dt^*}\dfrac{\partial F}{\pt\tt}\ d\dt.\label{eq: dt I}\end{equation}
We have
\[\dfrac{\partial F}{\pt\tt}=-\eps\sum_{|k|\leq K}\sm_k(\dt)k H^ke^{i\langle k,\tt\rangle},\] since $F=\eps\sum_{|k|\leq K}i\sm_k(\dt)H^ke^{i\langle k,\tt\rangle}$ defined in Lemma~\ref{Lm: abc}. We also have
\[\sm_k(\dt)kH^k\ll \sm\mu e^{-(2\rho_2 +\rho_3)|k|}e^{-\dt|\langle k,\om^*\rangle|}W(Y,\dt).\]
Hence we have the estimate (recall $2\pi \bar T=T$.)
\[\left\Vert \dfrac{\partial F}{\pt\tt}(\dt)\right\Vert_{\rho_2}\leq\eps\sm\mu\sum_{|k|\leq K} e^{-(\rho_3+\rho_2)|k|}e^{-\dt|\langle k,\om^*\rangle|}W\leq \dfrac{5\eps\mu}{(\rho_3+\rho_2)^n}e^{-\dt/\bar{T}}.\]

\[\left\Vert  \int_0^{\dt^*}\dfrac{\partial F}{\pt\tt}(\dt)d\dt\right\Vert_{\rho_2}\leq \int_0^{\dt^*}\left\Vert  \dfrac{\partial F}{\pt\tt}(\dt)\right\Vert_{\rho_2}d\dt\leq \dfrac{5\eps\mu}{(\rho_3+\rho_2)^n}\int_0^{\dt^*}e^{-\dt/\bar{T}}d\dt\leq \dfrac{5\eps\mu\bar{T}}{(\rho_3+\rho_2)^n}.\]
 \end{proof}
\begin{proof}[Proof of Theorem~\ref{Thm: main}]
Lemma~\ref{Lm: Phi} and~\ref{Lm: dI} complete the proof of the theorem. Notice the conditions of the theorem coincide with that of Lemma~\ref{Lm: Phi} and~\ref{Lm: dI}, where the last condition in the theorem is exactly $A(\dt^*)<4\sigma^2/25$. Lemma~\ref{Lm: Phi} gives the estimate of $\tilde\Psi$ and Lemma~\ref{Lm: dI} gives the estimates for the deviation of the variables. 
\end{proof}

\section{Local stability: stability in a vicinity of a given periodic orbit }\label{section: local}
In this section, we derive stability result using the normal form Theorem~\ref{Thm: main}. Recall in Section~\ref{section: normal}, we have set $\om^*=\dfrac{\partial H_0}{\partial I}(0)$ as the frequency vector of the periodic orbit that we are considering. We consider initial condition $I(0)$ such that $|I(0)|_2\leq r$.
\begin{Thm}(Local stability)\label{Thm: local}
If the conditions of Theorem~\ref{Thm: main} and the following conditions are satisfied, 
\begin{itemize}
\item $\dfrac{6\eps\mu}{\rho_2^n} M_-\leq M_+^{2} r^2$,
\item $\dfrac{5n\eps\mu \bar{T}}{(\rho_2+\rho_3)^n}\leq r$,
\end{itemize}
then we have stability result: if the initial conditions $|I(0)|_2\leq r$, then one has $| I(t)|_2\leq \mathcal{R}:=8\dfrac{rM_+}{M_-}$, for all time $|t|\leq \mathcal{T}:=\dfrac{1}{|\om^*|}e^{\frac{\rho_1}{M_+\mathcal{R}\bar{\bar{T}}}}$.\\
\end{Thm}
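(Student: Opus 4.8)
The plan is to exploit the normal form from Theorem~\ref{Thm: main} and then run a standard Lochak-type energy/trapping argument using the quasi-convexity of $H_0$. First I would apply Theorem~\ref{Thm: main} with the confinement radius $\mathcal{R}=8rM_+/M_-$ to be verified a posteriori, obtaining new variables $(I',\tt',x',y')$ in which the Hamiltonian reads $H=\langle\om^*,I'\rangle+G(I')+\eps\bar\Psi+\eps\tilde\Psi$, with $\|\tilde\Psi\|_{\rho_2}$ exponentially small and $\|\bar\Psi\|_{\rho_2}\le 5\mu/\rho_2^n$, and the change of variables moving each coordinate by at most $5\eps\mu T/(2\pi(\rho_2+\rho_3)^n)$. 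The two new hypotheses of the theorem are precisely what makes this work: the second bullet, $5n\eps\mu\bar T/(\rho_2+\rho_3)^n\le r$, controls the drift of the coordinate change (using $T=2\pi\bar T$ and the $n$-dimensional vector norm comparison), so starting from $|I(0)|_2\le r$ we still have $|I'(0)|_2$ only slightly larger than $r$; the first bullet, $6\eps\mu M_-/\rho_2^n\le M_+^2r^2$, is designed so that the resonant and nonresonant perturbation sizes $\eps\cdot 5\mu/\rho_2^n$ are dominated by a quadratic-in-$r$ quantity, which is the right scale to be absorbed by the convexity gap.

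The core of the argument is the trapping step. Along the flow of the normal-form Hamiltonian, $\langle\om^*,I'\rangle$ is conserved up to the exponentially small error coming from $\tilde\Psi$ (and also $\bar\Psi$ does not move $\langle\om^*,I'\rangle$ when the resonant module is one-dimensional — here $\bar\Psi$ depends on $\tt'$ only through resonant harmonics $\langle k,\om^*\rangle=0$, so $\{\langle\om^*,I'\rangle,\bar\Psi\}=-\langle\om^*,\partial\bar\Psi/\partial\tt'\rangle=0$). Also the full energy $H$ is conserved. I would then write $H_0(I')=\langle\om^*,I'\rangle+G(I')$ and use Lochak's inequality: on a level set of $H_0$, quasi-convexity (Definition~\ref{def: convex}, lower bound $M_-$) forces $|I'(t)-I'(0)|_2^2$ to be controlled by the variation of $\langle\om^*,I'\rangle$ plus the variation of $H_0$, i.e. roughly $M_-|I'(t)-I'(0)|_2^2\lesssim |\langle\om^*,I'(t)-I'(0)\rangle|+|H_0(I'(t))-H_0(I'(0))|$. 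Since both terms on the right are bounded by a few times $\eps\|\bar\Psi+\tilde\Psi\|_{\rho_2}\le 5\eps\mu/\rho_2^n$ over the relevant time span, the first bullet hypothesis gives $|I'(t)-I'(0)|_2^2\lesssim (M_+^2/M_-^2)r^2$, hence $|I'(t)|_2$ stays below a fixed multiple of $(M_+/M_-)r$; choosing the multiple to be $8$ closes the bootstrap and confirms $|I(t)|_2\le\mathcal{R}=8rM_+/M_-$ (after undoing the exponentially-small coordinate change). The time span over which this holds is dictated by how long it takes the exponentially small term $\tilde\Psi$ to accumulate an effect of size $\sim r^2$: differentiating, $|\dot I'|\lesssim \eps\|\partial\tilde\Psi/\partial\tt'\|_{\rho_2}\lesssim (5\eps\mu/\rho_2^n)e^{-\rho_1/(M_+\mathcal{R}\bar T)}$ plus the resonant contribution which only moves $I'$ within the confinement ball, so the confinement time is at least $\mathcal{T}=\frac{1}{|\om^*|}e^{\rho_1/(M_+\mathcal{R}\bar T)}$, matching the statement (note $|\om^*|$ here plays the role of $1/\bar T$ up to constants).

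A continuity/bootstrap formulation is needed because the confinement radius $\mathcal{R}$ enters the hypotheses of Theorem~\ref{Thm: main} (through $K=\rho_1/(\rho_3M_+\mathcal{R}\bar T)$ and the restriction $5(\sqrt n+2\sqrt m)\mathcal{R}<\sigma$) and also enters the conclusion about how large $|I'|$ is allowed to be: I would set up the standard argument that the set of times $t$ for which $|I'(s)|_2\le\mathcal{R}$ for all $s\le t$ is open, closed, and nonempty in $[0,\mathcal{T}]$, using the a priori estimate just described to upgrade $\le\mathcal{R}$ to $<\mathcal{R}$ strictly, so it cannot have a finite maximal endpoint before $\mathcal{T}$. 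The main obstacle I anticipate is bookkeeping rather than conceptual: one must verify that with $\mathcal{R}=8rM_+/M_-$ the numerical constants in Lochak's convexity inequality genuinely give strict improvement (the factor $8$ is presumably chosen with exactly enough slack), and one must check that the hypotheses of Theorem~\ref{Thm: main} are actually implied by those listed here together with the global restrictions — in particular that the exponential-smallness estimate $e^{-\rho_1/(M_+\mathcal{R}\bar T)}$ is small enough relative to $r^2$ over a time $\mathcal{T}$ that itself scales like $e^{\rho_1/(M_+\mathcal{R}\bar T)}$, so that the product $\mathcal{T}\cdot\|\dot I'\|$ stays $o(r^2)$. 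That balance is the delicate point and is where I would spend most of the care.
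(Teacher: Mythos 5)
Your overall strategy (pass to the normal form of Theorem~\ref{Thm: main}, use conservation of $H$ and near-conservation of $\langle\om^*,I'\rangle$ coming from $\langle\om^*,\partial\bar\Psi/\partial\tt'\rangle=0$ and the exponentially small $\tilde\Psi$, then trap $I'$ by quasi-convexity) is the same as the paper's, and your handling of the time span and of the coordinate-change drift via the second bullet matches the paper. But the central inequality you invoke is false as stated. You claim that quasi-convexity gives
\begin{equation*}
M_-\,|I'(t)-I'(0)|_2^2\ \lesssim\ |\langle\om^*,I'(t)-I'(0)\rangle|+|H_0(I'(t))-H_0(I'(0))|.
\end{equation*}
Quasi-convexity controls the quadratic remainder of the Taylor expansion of $H_0$ \emph{around the expansion point}, so the linear term must carry $\om(I'(0))=\nabla H_0(I'(0))$, not $\om^*=\om(0)$. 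Counterexample: $H_0(I)=I_1+\tfrac12(I_1^2+I_2^2)$, $\om^*=(1,0)$, $I'(0)=(0,r)$, $I'(t)=(0,-r)$; both right-hand terms vanish while the left-hand side is $4M_-r^2$. The discrepancy is exactly the cross term $\langle\om(I'(0))-\om^*,\,I'(t)-I'(0)\rangle$, which is of size $M_+\bigl(r+O(\eps\mu\bar T)\bigr)\,|I'(t)-I'(0)|_2$; this is not a bookkeeping error but the dominant contribution, and it is what produces the final radius of order $M_+r/M_-$ (and the factor $8$). Consequently your step ``both terms on the right are bounded by a few times $\eps\|\bar\Psi+\tilde\Psi\|_{\rho_2}$, hence $|I'(t)-I'(0)|_2^2\lesssim (M_+/M_-)^2r^2$'' does not follow from what you proved.

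Two repairs are available. The paper's route: expand $H_0$ around $I'(0)$, obtain $\tfrac12 M_-a^2\le |H_0(I'(t))-H_0(I'(0))|+|\langle\om^*,\Delta\rangle|+M_+\bigl(r+n\tfrac{5\eps\mu\bar T}{(\rho_2+\rho_3)^n}\bigr)a$ with $a=|I'(t)-I'(0)|_2$, then solve this \emph{quadratic} inequality in $a$; here the first bullet $6\eps\mu M_-/\rho_2^n\le M_+^2r^2$ is used to absorb the constant term under the square root (giving $a\le 5M_+r/M_-$), not to bound $a^2$ directly as in your sketch. Alternatively, a legitimate version of your idea is to work with $G=H_0-\langle\om^*,\cdot\rangle$: its variation along the orbit equals the (small) variation of $H_0$ minus that of $\langle\om^*,I'\rangle$, and the two-sided bounds $\tfrac12 M_-|I|_2^2\le|G(I)|\le\tfrac12 M_+|I|_2^2$ confine $|I'(t)|_2$ (distance to the resonant point $I=0$, which is what the conclusion requires) rather than $|I'(t)-I'(0)|_2$. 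Either fix closes the gap; as written, your trapping step does not.
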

\begin{proof}

The integrable part has the Taylor expansion around the point $I'(0)$,
  \begin{equation}
  \begin{aligned}
  &H_0(I'(t))-H_0(I'(0))=\langle\om(I'(0)),I'(t)-I'(0)\rangle\\
  &+\left\langle\int_0^1(1-s)\nabla^2 H_0\left(sI'(t)+(1-s)I'(0)\right)ds (I'(t)-I'(0)), I'(t)-I'(0)\right\rangle.
  \end{aligned}\nonumber
  \end{equation}
We obtain the following inequality using Definition~\ref{def: convex}
\begin{equation}\quad\qquad\dfrac{1}{2}M_-\left|I'(t)-I'(0)\right|_2^2\leq \left|H_0(I'(t))-H_0(I'(0))\right|+\left|\langle\om(I'(0)),I'(t)-I'(0)\rangle\right|.\label{eq: energy}\end{equation}
We use the energy conservation and Lemma~\ref{Lm: Phi} for the first term of the RHS to get
\[\left|H_0(I'(t))-H_0(I'(0))\right|\leq \eps\left(\left\Vert\left(\bar{\Psi}+\tilde{\Psi}\right)(I'(0))\right\Vert_{\rho_2}+\left\Vert\left(\bar{\Psi}+\tilde{\Psi}\right)(I'(t))\right\Vert_{\rho_2}\right)\leq \dfrac{10\eps\mu}{\rho_2^n}.\]
For the second term in the RHS of inequality~(\ref{eq: energy}), we have
\[\left|\langle\om(I'(0)),I'(t)-I'(0)\rangle\right|\leq \left|\langle\om^*,I'(t)-I'(0)\rangle\right|+\left|\langle\om(I'(0))-\om^*,I'(t)-I'(0)\rangle\right|,\]
  For the first term in the RHS, we use the Hamiltonian equation, Lemma~\ref{Lm: Phi} and the fact that $\left\langle\om^*,\dfrac{\partial\bar{\Psi}}{\partial\tt}\right\rangle=0$.
  \[\left|\langle\om^*,I'(t)-I'(0)\rangle\right|\leq |t|\left\Vert\left\langle \om^*,\dfrac{\pt\eps\tilde{\Psi}}{\pt \tt}\right\rangle\right\Vert_{\rho_3}\leq \mathcal{T}\dfrac{5\eps\mu}{\rho_2^n}e^{-\frac{\rho_1}{M_+\mathcal{R}\bar{T}}}|\om^*|,\quad \mathrm{for}\ |t|\leq \mathcal{T}.\]
  For the second term,
  \[\left|\langle\om(I'(0))-\om^*,I'(t)-I'(0)\rangle\right|\leq M_+\left(|I(0)|_2+|I'(0)-I(0)|_2\right)\cdot| I'(t)-I'(0)|_{2}\]\[\leq M_+\left(r+n\dfrac{5\eps\mu \bar{T}}{(\rho_3+\rho_2)^n}\right)\cdot| I'(t)-I'(0)|_{2},\]
  where we use $|I(0)|_2\leq r$ and $|I'(0)-I(0)|_2\leq n\dfrac{5\eps\mu \bar{T}}{(\rho_3+\rho_2)^n}$ following from Lemma~\ref{Lm: dI}. We have a factor $n$ since we go from $|\cdot|_\infty$ to $|\cdot|_2$.

If we set $a=|I'(t)-I'(0)|_2$, then we get an inequality of $a$ from~(\ref{eq: energy}):\[\dfrac{M_-}{2}a^2\leq \dfrac{10\eps\mu}{\rho_2^n}+\mathcal{T}\dfrac{5\eps\mu}{\rho_2^n}e^{-\frac{\rho_1}{M_+\mathcal{R}\bar{T}}}|\om^*|
 +M_+\left(r+n\dfrac{5\eps\mu \bar{T}}{(\rho_3+\rho_2)^n}\right)a.\]
We choose \begin{equation}
 \mathcal{T}\leq \dfrac{1}{|\om^*|}e^{\frac{\rho_1}{M_+\mathcal{R}\bar{T}}},\quad n\dfrac{5\eps\mu \bar{T}}{(\rho_3+\rho_2)^n}\leq r,\label{eq: tr}
\end{equation}
to obtain \[a\leq\dfrac{2M_+r+\sqrt{4M^{2}_+r^2+\frac{30\eps\sm\mu}{\sm\rho_2^n} M_-}}{M_-}.\]
We set \begin{equation}\frac{6\eps\mu}{\rho_2^n} M_-\leq M^{2}_+ r^2.\label{eq: r}\end{equation} Then we have
\[a=| I'(t)-I'(0)|_{2}\leq 5\dfrac{M_+ r}{M_-},\]
\[| I(t)-I(0)|_2\leq |I(t)-I'(t)|_2+| I'(t)-I'(0)|_2+|I'(0)-I(0)|_2\leq 2r+\dfrac{5rM_+}{M_-},\]
\[| I(t)|_2\leq |I(0)|_2+| I(t)-I(0)|_2\leq 3r+5\dfrac{rM_+}{M_-}\leq \dfrac{8rM_+}{M_-}.\]
The proof is now complete.
\end{proof}

\begin{Rk}
We introduce a restriction~\ref{eq: r} instead of introducing a constant $g$ as did in \cite{LNN,N}.
The two restrictions of this theorem implies $\rho_2,\rho_3$ can be sufficiently small if $\eps$ is. Then $\rho_1$ can be very close to $\rho$. The restrictions of Theorem~\ref{Thm: main} are also satisfied for $\eps$ small enough. Then we get improved stability time compared with \cite{LNN,N}.
 \label{Rk: rho}\\
\end{Rk}
\section{Global stability: stability for arbitrary initial data}\label{section: global}
In this section, we consider stability result for arbitrary initial data and give a proof of Theorem~\ref{Thm: global}. We first prove the following lemma.
\begin{Lm}\label{Lm: dirichlet}
Let us fix $(1<)Q\in \R$ and assume $\dfrac{\sqrt{n-1}}{Q^{1/(n-1)}}<\dfrac{M_-^2}{4\sup_{I\in\mathcal{G}^n}|\nabla^3 H_0|_\infty}$, then for any $I\in \mathcal{G}^n$, there exists an integer $q$, $1\leq q<  Q$, and a point $I^*\in \mathcal{G}^n$
such that $\left| I -I^* \right|_{2}\leq \dfrac{\sqrt{n-1}}{M_-\bar{T}Q^{1/(n-1)}}$ and $\om(I^*)$ is rational vector of period $\bar{T}=q/|\om(I)|_\infty$.
\end{Lm}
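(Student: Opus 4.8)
The plan is to run Lochak's simultaneous Diophantine approximation: first approximate the \emph{direction} of the frequency $\om(I)=\nabla H_0(I)$ by a rational direction using Dirichlet's theorem, which produces a candidate resonant frequency $\om^*$ very close to $\om(I)$, and then invert the frequency map $I\mapsto\nabla H_0(I)$ — a local diffeomorphism thanks to Definition~\ref{def: convex} — to pull $\om^*$ back to a point $I^*$ near $I$.

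\emph{Step 1 (Dirichlet).} Put $\om=\om(I)$ and pick an index $j$ with $|\om_j|=|\om|_\infty$, so that the normalized vector $\om/|\om|_\infty$ has $j$-th entry $\pm1$ and all other entries in $[-1,1]$. Apply Dirichlet's theorem on simultaneous approximation to the $n-1$ numbers $\om_i/|\om|_\infty$, $i\neq j$: there is an integer $q$ with $1\le q<Q$ and integers $p_i$ ($i\neq j$) with $|\,q\om_i/|\om|_\infty-p_i\,|\le Q^{-1/(n-1)}$ (the exponent is $1/(n-1)$ because one coordinate has been frozen to $\pm1$). Set $p_j$ to be $q$ times the sign of $\om_j$, $\bar T=q/|\om|_\infty$, and $\om^*:=p/\bar T$; after dividing $p$ and $q$ by $\gcd(p_1,\dots,p_n)$ we may assume $\gcd=1$, which only decreases $\bar T$ and keeps $q<Q$. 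Since the $j$-th coordinates of $\bar T\om$ and $p$ agree exactly, $\om^*-\om$ has at most $n-1$ nonzero coordinates, each of modulus $\le\big(\bar T\,Q^{1/(n-1)}\big)^{-1}$, so $|\om^*-\om|_2\le \sqrt{n-1}\,\big(\bar T\,Q^{1/(n-1)}\big)^{-1}$.

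\emph{Step 2 (inverting the frequency map).} By the first inequality of Definition~\ref{def: convex}, $\nabla^2 H_0$ is nondegenerate with $|(\nabla^2 H_0)^{-1}v|_2\le M_-^{-1}|v|_2$ on $\mathcal G^n+\sm$, and its variation over a ball of radius $r$ about $I$ is bounded by $\sup_{I\in\mathcal G^n}|\nabla^3 H_0|_\infty\cdot r$. A Newton iteration for $\nabla H_0(I^*)=\om^*$ started at $I$ therefore converges and stays inside $\mathcal G^n$ once the frequency displacement is small compared with $M_-^2/\sup|\nabla^3 H_0|_\infty$; by Step 1 this is exactly the content of the hypothesis $\sqrt{n-1}\,Q^{-1/(n-1)}<M_-^2/\big(4\sup_I|\nabla^3 H_0|_\infty\big)$. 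Writing the limit $I^*$ via the mean value identity $\om^*-\om=\big(\int_0^1\nabla^2 H_0(I+t(I^*-I))\,dt\big)(I^*-I)$ and using the lower bound of Definition~\ref{def: convex} along the (short) segment yields $|I^*-I|_2\le M_-^{-1}|\om^*-\om|_2\le \sqrt{n-1}\,\big(M_-\,\bar T\,Q^{1/(n-1)}\big)^{-1}$. Since $\om(I^*)=\om^*$ with $\bar T\om^*=p\in\Z^n$, $\gcd(p)=1$, and $q<Q$, the frequency $\om(I^*)$ is rational of period $\bar T=q/|\om(I)|_\infty$, which is the claim.

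The main obstacle is Step 2: converting the soft fact ``the frequency map is a local diffeomorphism'' into the \emph{explicit} bound with the stated constant, i.e. carrying out the Newton / quantitative inverse-function argument with all constants tracked purely in terms of $M_-$ and $\sup|\nabla^3 H_0|_\infty$, and checking that the relevant ball does not leave the analyticity domain $\mathcal G^n+\sm$. By contrast the Diophantine input (Step 1), the $\gcd$ reduction, and the bookkeeping of the $\sqrt{n-1}$ factor coming from the frozen coordinate are routine.
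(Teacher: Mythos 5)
Your proposal is correct and follows essentially the same route as the paper: apply Dirichlet's simultaneous approximation to the rescaled frequency $\om/|\om|_\infty$ with the maximal ($\pm1$) coordinate frozen to get a rational $\om^*$ of period $\bar T=q/|\om|_\infty$ with error $\le 1/(\bar T Q^{1/(n-1)})$ in the remaining $n-1$ coordinates, then invert the frequency map and use the mean-value identity together with the $M_-$ convexity bound to convert the frequency error into the stated $\sqrt{n-1}/(M_-\bar T Q^{1/(n-1)})$ bound on $|I-I^*|_2$. The only cosmetic difference is that you sketch a Newton/quantitative inverse-function argument where the paper simply invokes the implicit function theorem statement (with radius $M_-^2/(4|\nabla^3 H_0|_\infty)$) from the reference [LNN], which is why the hypothesis on $Q$ appears.
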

\begin{proof} First recall the Dirichlet theorem for simultaneous approximation:  \\
\emph{for any $\alpha\in \R^n, Q\in \R,\ and \ Q>1$, there exists an integer $q$,
$1\leq q<Q$, such that $| q\alpha-\Z^n|_{\infty}\leq Q^{-1/n}$.}

An improvement of the estimate can be obtained by rescaling $\al$ to $\al/|\al|_\infty$. Then apply the Dirichlet theorem to approximate the remaining $n-1$ components of $\al$ with one of whose $\pm 1$ entries removed.
We get the following:

\emph{ there exists a rational vector $\al^*$ of period $\bar{T}=\frac{q}{|\alpha|_\infty}$, $q\in \N$,
 $1<q<Q$ and $|\al^*- \alpha|_{\infty}\leq \dfrac{1}{\bar{T}Q^{1/(n-1)}}$ $($see (the only) Proposition in \cite{N}$)$.
}

The frequency vector is $\om(I)=\nabla H_0(I)$. Consider two points $I^*$ and $I$ such that $\om(I^*)$ is as stated in the lemma and approximates $\om(I)$ in the same way as $\al^*$ approximates $\al$.
\[\om(I)-\om(I^*)=\int_0^1\nabla \om(tI+(1-t)I^*)dt(I-I^*).\]
Hence from Definition~\ref{def: convex}\[M_-\vert I-I^*\vert_2^2\leq \left\langle\int_0^1\nabla \om(tI+(1-t)I^*)dt(I-I^*),(I-I^*)\right\rangle\]\[=\langle(I-I^*), \om(I)-\om(I^*) \rangle\leq \vert I-I^*\vert_2\vert \om(I)-\om(I^*)\vert_2.\]
This implies
\[M_-\vert I-I^*\vert_2\leq\vert \om(I)-\om(I^*)\vert_2\leq \sqrt{n-1}|\om(I)-\om(I^*)|_\infty\leq \dfrac{\sqrt{n-1}|\om(I)|_\infty}{qQ^{1/(n-1)}}.\]
In order to make sure the point $I^*$ can be found given $I$, we need to show the frequency map $\om(I)$ can be inverted, which can be done within the ball $B\left(\om(I^*), \dfrac{M_-^2}{4|\nabla^3 H_0|_\infty}\right)$ centered at $\om(I^*)$ with radius $\dfrac{M_-^2}{4|\nabla^3 H_0|_\infty}$ using implicit function theorem (see \cite{LNN}). So we assume $\dfrac{\sqrt{n-1}}{Q^{1/(n-1)}}<\dfrac{M_-^2}{4|\nabla^3 H_0|_\infty}$.
\end{proof}
\begin{proof}[Proof of Theorem~\ref{Thm: global}]

We define
$r=\dfrac{\sqrt{n-1}}{M_-\bar{T}Q^{1/(n-1)}}$ and $\mathcal{R}=8r\dfrac{M_+}{M_-}=\dfrac{8\sqrt{(n-1) }M_{+}}{M_-^2\bar{T}Q^{\frac{1}{n-1}}}$ as in Lemma~\ref{Lm: dirichlet}.
If we set $Q=\eps^{-\frac{n-1}{2n}}$, then we have \[\mathcal{R}=\dfrac{8\sqrt{n-1 }M_{+}}{M_-^2\bar{T}}\eps^{1/2n}\leq \dfrac{8\sqrt{n-1 }M_{+}}{M_-^2}\eps^{1/2n}|\nabla H_0|_\infty,\quad \mathcal{R}\bar{T}=\dfrac{8\sqrt{n-1 }M_{+}}{M_-^2}\eps^{1/2n}.\]

The stability time in Theorem~\ref{Thm: local} is
\[\mathcal{T}=\dfrac{1}{|\om^*|}\exp\left(\dfrac{\rho_1}{M_+\mathcal{R}\bar{T}}\right)\geq \dfrac{1}{\sup_{I\in\mathcal{G}^n}|\nabla H_0|}\exp\left(\dfrac{\rho_1M_-^2}{ 8\sqrt{n-1}M_+^2} \eps^{-\frac{1}{2n}}\right).\]
Now let us analyze the restrictions that we have. The restrictions are from Theorem~\ref{Thm: main}, Theorem~\ref{Thm: local} and Lemma~\ref{Lm: dirichlet}. The quantities $\bar{T},r,\mathcal{R}, K$ satisfy the following:
\[1<\bar{T}|\om|_\infty<Q=\eps^{-\frac{n-1}{2n}},\quad K=\dfrac{\rho_1}{\rho_3 \mathcal{R}\bar{T}}=\dfrac{\rho_1 M_-^2}{\rho_3 8\sqrt{n-1 }M_{+}}\eps^{-1/2n}.\]
\[\eps^{1/2}|\om|_\infty\dfrac{\sqrt{n-1}}{M_-}\leq r\leq \eps^{1/2n}|\om|_\infty\dfrac{\sqrt{n-1}}{M_-}.\]
\[\eps^{1/2}|\om|_\infty\dfrac{8\sqrt{n-1}M_+}{M^2_-}\leq \mathcal{R}\leq \eps^{1/2n}|\om|_\infty\dfrac{8\sqrt{n-1}M^+}{M^2_-}.\]
We substitute the bounds for $\bar{T},r,\mathcal{R}, K$ into the restrictions that we have to obtain the restriction of Theorem~\ref{Thm: global}. The first restriction in Theorem~\ref{Thm: global} is from the first one of Theorem~\ref{Thm: local}. The second in Theorem~\ref{Thm: global} is a collection of the first two ones of Theorem~\ref{Thm: main}, the second of Theorem~\ref{Thm: local} and that of Lemma~\ref{Lm: dirichlet}. The last in Theorem~\ref{Thm: global} is from the third one of Theorem~\ref{Thm: main}. We break it into two inequalities by setting $\dfrac{2n}{\rho_3K}\left(1+\ln\dfrac{2K^2\rho_3}{n}\right)\leq 1$.
\end{proof}
\begin{Rk}
The first restriction in Theorem~\ref{Thm: global} can be satisfied by making $\mu$ smaller while $\eps$ larger. This will lead to shorter stability time. The $\mu$ here plays the same role of the factor $g$ in \cite{LNN, N}. The second restriction can always be satisfied by making $\eps$ small. The third restriction can be satisfied by making $\mu$ or $\rho_1/\rho_3$ small. However, since $n!$ grows very fast, for large $n$, this restriction is easy to satisfy.\label{Rk: global}
\end{Rk}
\appendix
\section{Proof of Lemma~\ref{Lm: estimate}}\label{section: appendix}

The proof is done in the following Claim 1,2,3, which estimates $\Sigma_\pm,\Sigma_>,\Sigma_0$ in Lemma~\ref{Lm: estimate} respectively. Before the proof of the lemma, let us first analyze the geometry of numbers involved.

\subsection{The geometry of integer vectors}
Let us look at the Figure 2.  \\
\begin{figure}[ht]
\begin{center}
\includegraphics[width=0.6\textwidth]{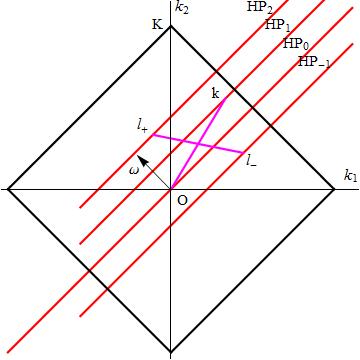}
\caption{counting the number of combinations.}
\end{center}\label{figure: no}
\end{figure}
\begin{itemize}
\item {\it The diamond}:  the diamond in the figure encloses all the vectors $k$ with $|k|\leq K$
(in 3-dim it is an octahedron. In general it is a ball of radius $K$ under the $l^1$ norm). The total number of integer
vectors inclosed in the diamond is $\dfrac{(2K)^n}{n!}$. Indeed, in $n$-dim, the diamond consists of $2^n$ simplices.
Each of the simplices has volume $\dfrac{K^n}{n!}$.
\item {\it The hyperplane}: the small arrow indicates the rational frequency $\om^*$. $\mathrm{HP}_0$ is a hyperplane that
is perpendicular to $\om^*$.  $\mathrm{HP}_0=D_0$ and the $(n-1)$-dim volume of $\mathrm{HP}_0\cap \mathrm{Diamond}$ is less than
$\dfrac{(2K)^{n-1}}{(n-1)!}$, which is the $(n-1)$-dim volume of an $(n-1)$-dim Diamond. Any vector lies above $\mathrm{HP}_0$ has positive inner product with $\om^*$,
while any vector below has negative inner products. Moreover, if two vectors lie on the same hyperplane which is parallel to $\mathrm{HP}_0$,
they will have the same inner product with $\om^*$. Let us denote $\mathrm{HP}_d=\{k\in\Z^n|\langle k,\om^*\rangle=d/\bar{T}\}$.   $\mathrm{HP}_0\cap \mathrm{Diamond}$ contains at most $\dfrac{(2K)^{n-1}}{(n-1)!}$ integer points.
\item {\it The parallelogram}: consider the vectors $l_+,l_-,k$ in the Figure 2. Suppose we have the
relation $l_++l_-=k$. Then the three vectors together with the origin form a parallelogram. Suppose $\langle k,\om^*\rangle \bar{T}=1$, and $\langle l_+\om^*\rangle\bar{T}=2$, then $\langle l_-,\om^*\rangle \bar{T}=-1$. $l_+$ and $l_-$ can move on their corresponding hyperplane, but a parallelogram is always preserved.
\item {\it The shape of the diamond under the averaging flow}: in the definition of $D_{\pm}(\dt)$,
we have the restriction $|l_{\pm}|\rho_3+|\langle l_{\pm},\om^*\rangle |\dt\leq \rho_3 K$ for $l_{\pm}\in D_{\pm}(\dt)$.
When $\dt=0$, this is our diamond. When $\dt$ increases, The diamond will collapse, i.e. the integer vectors becomes fewer on HP$_d$.
\begin{equation}|l_{\pm}|\leq  K-\dfrac{\dt}{\rho_3}|\langle l_{\pm},\om^*\rangle |.\label{eq: collapse}\end{equation}
The rate of decreasing depends on the inner product $|\langle l_{\pm},\om^*\rangle |$. The farther a hyperplane $\mathrm{HP}_d$ is away from $\mathrm{HP}_0$
(The larger the $d$), the faster it collapses (with volume decreasing rate $d/(\rho_3\bar{T})$). $\mathrm{HP}_0$ does not change at all.
When $\dt=\dt^*$, the diamond would collapse to its intersection with $\mathrm{HP}_0$. By then we would have
successfully killed all the nonresonant terms up to the desired exponential smallness $e^{-\rho_2|k|-\rho_3 K}$.
We denote the collapsed diamond at time $\dt$ by $\mathrm{Diamond}(\dt)$.
\end{itemize}
\subsection{Estimate of $\Sm_\pm, \Sm_>,\Sm_0$ and the proof of Lemma~\ref{Lm: estimate}}\label{subsection: estimate}
Now we obtain estimates of $\Sm_\pm, \Sm_>,\Sm_0$ for fixed $k$.
\subsubsection{Claim 1:}
\emph{The sum $\Sm_\pm(\dt)$ defined in equation~$(\ref{eq: sum})$ can be estimated as follows: }
\begin{equation}
\Sm_\pm(\dt)\leq
\begin{cases}
\dfrac{(2K)^{n}}{2n!} & \ \textrm{if} \ \dt\leq \dfrac{\bar{T}n}{2K},\\
\dfrac{(2K)^{n-1}\bar{T}}{2(n-1)!\dt} & \ \textrm{if}\  \dt\geq \dfrac{\bar{T}n}{2K}.
\end{cases}\nonumber
\end{equation}

\begin{proof} In the proof, the vector $k$ is fixed.
The $\Sm_\pm$ is defined in equation~(\ref{eq: sum}),
\[\Sm_\pm(\dt)=\sum_{l_{+}+l_-=k}e^{-(|l_{+}|+|l_-|-|k|)(\rho_3+2\rho_2)}e^{-\left (S_{l_{+}}\langle\om^*,l_{+}\rangle +S_{l_-}\langle\om^*,l_-\rangle -S_{k}\langle\om^*,k\rangle\right)},\]
which can be estimated as\[\Sm_\pm(\dt)\leq \displaystyle\sum_{l_++l_-=k}e^{-(S_{l_{+}}\langle\om^*,l_{+}\rangle +S_{l_-}\langle\om^*,l_-\rangle -S_{k}\langle\om^*,k\rangle)},\]
where we have dropped $e^{-(\rho_3+2\rho_2)(|l_+|+|l_-|-|k|)}$, since
\[l_++l_-=k\Longrightarrow|l_+|+|l_-|\geq|k|\Longrightarrow e^{-(\rho_3+2\rho_2)(|l_+|+|l_-|-|k|)}\leq 1.\]
From the relation $\langle\om^*,l_{+}\rangle +\langle\om^*,l_-\rangle =\langle\om^*,k\rangle$, and the inequality
$\dt\geq |S_k(\dt)|$, we have the following two cases depending on the sign of $\langle \om^*,k\rangle$.\\
If $\langle \om^*,k\rangle\geq 0$, then
\[S_{l_{+}}\langle\om^*,l_{+}\rangle +S_{l_-}\langle\om^*,l_-\rangle -S_{k}\langle\om^*,k\rangle\geq
\dt(\langle\om^*,k\rangle-\langle\om^*,l_-\rangle)-\dt\langle\om^*,l_-\rangle-S_k\langle\om^*,k\rangle\]
\[\geq (\dt-S_k)\langle \om^*,k\rangle-2\dt\langle \om^*,l_-\rangle\geq 2\dt|\langle \om^*,l_-\rangle|.\]
If $\langle \om^*,k\rangle\leq 0$, then
\[S_{l_{+}}\langle\om^*,l_{+}\rangle +S_{l_-}\langle\om^*,l_-\rangle -S_{k}\langle\om^*,k\rangle\geq
\dt\langle\om^*,l_+\rangle-\dt(S_k\langle\om^*,k\rangle-\langle\om^*,l_+\rangle)-S_k\langle\om^*,k\rangle\]
\[\geq (-\dt-S_k)\langle \om^*,k\rangle+2\dt\langle \om^*,l_+\rangle\geq 2\dt|\langle \om^*,l_+\rangle|.\]

Moreover, when $\dt=0$, the number of integer vectors contained in $\mathrm{HP}_d\cap \mathrm{Diamond}(0)$ is no greater than $\dfrac{(2K)^{n-1}}{(n-1)!}$. It is zero when $|d|>K$.
Since $k$ is fixed, we can vary either $l_+$ or $l_-$. The other one will be determined uniquely. According to the analysis above,
we sum over $l_+$ if $\langle \om^*,k\rangle\geq 0$ while over $l_-$ otherwise. We consider the $l_+$ case for instance.
As $\dt$ increases, on each $\mathrm{HP}_d$ the number of integer vectors contained in
$\mathrm{HP}_d\cap \mathrm{Diamond}(\dt)$ is no greater than $\dfrac{(2K-2d\dt /\bar{T}\rho_3)^{n-1}}{(n-1)!}$ according to inequality~(\ref{eq: collapse}).
It is zero when $|d|>K$ or $K\leq d\dt/\bar{T}\rho_3$. Now we have the estimate
\begin{equation}
\begin{aligned}
\displaystyle
&\Sm_\pm(\dt)\leq \sum_d \dfrac{(2k-2d\dt/\bar{T}\rho_3)^{n-1}}{(n-1)!}e^{-2d\dt/\bar{T}}\leq\int_{0}^{K\wedge\frac{K\bar{T}\rho_3}{\dt}}\dfrac{(2K-2d\dt/\bar{T}\rho_3 )^{n-1}}{(n-1)!}e^{-2d\dt /\bar{T}}\ d(d)\\
&\leq \dfrac{(2K)^{n-1}}{(n-1)!}\int_{0}^{\infty}e^{-2d\dt /\bar{T}}\ d(d)\leq \dfrac{(2K)^{n-1}\bar{T}}{2(n-1)!\dt}. \\
\nonumber
\end{aligned}
\end{equation}
This  estimate is poor when $\dt$ is close to zero. But in fact when $\dt=0$, the upper bound is $\Sm_\pm(0)\leq\dfrac{(2K)^{n}}{2n!},  $ and the upper bound is monotonically decreasing w.r.t. $\dt$. So we can use the bound stated in Claim $1$. See figure 3.
\begin{figure}[ht]
\begin{center}
\includegraphics[width=0.6\textwidth]{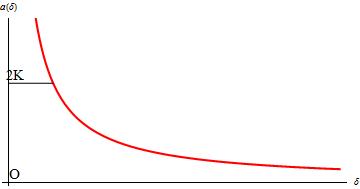}
\caption{upper bound of $a(\dt)$.}
\end{center}\label{figure: cut}
\end{figure}
\end{proof}
\subsubsection{Claim 2: }
\emph{The sum $\Sm_>(\dt)$ defined in equation~$(\ref{eq: sum})$ can be estimated as follows: }
$$
\displaystyle\Sm_>(\dt)\leq
\begin{cases}
\ \dfrac{(2K)^{n}}{n!} & if \ \dt\leq \dfrac{\bar{T}n}{2K},\\
\ \dfrac{(2K)^{n-1}\bar{T}}{(n-1)!\dt} & if \ \dt\geq \dfrac{\bar{T}n}{2K}.
\end{cases}
$$

\begin{proof}
It is defined in equation~(\ref{eq: sum}) that
\[\Sm_>(\dt)=\sum_{l_{\pm}+l_>=k}e^{-(|l_{\pm}|+|l_>|-|k|)(\rho_3+2\rho_2)}e^{-\left (S_{l_{\pm}}\langle\om^*,l_{\pm}\rangle +S_{l_>}\langle\om^*,l_>\rangle -S_{k}\langle\om^*,k\rangle\right)}\]
We first show \[\Sm_>(\dt)\leq \displaystyle\sum_{l_{\pm}+l_>=k}e^{-S_{l_{\pm}}\langle\om^*,l_{\pm}\rangle}.\]

According to the definition of $l_>\in D_>(\dt)$, the Fourier term corresponding to $l_>$ is of the size
\[e^{-S_{l_>}\langle\om^*,l_>\rangle -|l_>|\rho_3}\leq e^{-\rho_3K}.\]
But $e^{-S_{k}\langle\om^*,k\rangle -\rho_3|k|}\geq e^{-\rho_3K}$ due to Definition~\ref{Def: main def}.
The equality is achieved only when $k\in D_{>}(\dt)$. So we know \[e^{-S_{l_>}\langle\om^*,l_>\rangle -|l_>|\rho_3+ S_{k}\langle\om^*,k\rangle +\rho_3|k|}\leq 1.\]
We drop the $e^{-|l_+|\rho_3}$ and  $e^{-(|l_{\pm}|+|l_>|-|k|)(2\rho_2)}$ to obtain $\Sm_>(\dt)\leq \displaystyle\sum_{l_{\pm}+l_>=k}e^{-S_{l_{\pm}}\langle\om^*,l_{\pm}\rangle}$.
This is essentially the same as the Case $\Sm_\pm$ discussed above. So we have the bound stated in Claim $2$.\\
\end{proof}
\subsubsection{Claim $3$: }
\emph{The sum $\Sm_0(\dt)$ defined in equation~(\ref{eq: sum}) can be estimated as follows: }
\[\Sm_0(\dt)\leq \dfrac{(2K-2|\langle\om^*,k\rangle|\dt/\rho_3)^{n-1}}{(n-1)!}\leq\dfrac{(2K-2\dt/\bar{T}\rho_3)^{n-1}}{(n-1)!}.\]

\begin{proof} The term $\displaystyle\Sm_0(\dt)$ turns out to be the most troublesome term. Again equation~(\ref{eq: sum}) tells us
\[\Sm_0(\dt)=\sum_{l_{\pm}+l_0=k}e^{-(|l_{\pm}|+|l_0|-|k|)(\rho_3+2\rho_2)}e^{-\left (S_{l_{\pm}}\langle\om^*,l_{\pm}\rangle +S_{l_0}\langle\om^*,l_0\rangle -S_{k}\langle\om^*,k\rangle\right)}\]

Because of the relation $\langle\om^*,l_{\pm}\rangle +\langle\om^*,l_0\rangle =\langle\om^*,k\rangle$ and $\langle\om^*,l_0\rangle=0$, we get
$\langle\om^*,l_{\pm}\rangle =\langle\om^*,k\rangle$ and \[S_{l_{\pm}}(\dt)\langle\om^*,l_{\pm}\rangle\geq S_{k}(\dt)\langle\om^*,k\rangle.\]
The reason is, we know that $l_\pm\in D_\pm(\dt)$ until time $\dt$, but we do not know where $k$ is. This implies $|S_{l_\pm}(\dt)|\geq |S_{k}(\dt)|$. The ``=" is achieved only if $k\in D_-(\dt)\cup D_+(\dt)$.\\
So we get \[\displaystyle\Sm_0(\dt)\leq \sum_{l_{\pm}+l_0=k}e^{-(|l_{\pm}|+|l_0|-|k|)(\rho_3+2\rho_2)}\leq \sum_{l_{\pm}+l_0=k} 1.\] Now consider Figure 2.
Since $l_0\in \mathrm{HP}_0$,  we get $l_{\pm}$ and $k$ must lie on the same hyperplane.
So $\sum_{l_{\pm}+l_0=k} 1$ is bounded by the number of the possible $\l_{\pm}$'s, which is
\[\dfrac{(2K-2|\langle\om^*,k\rangle|\dt/\rho_3)^{n-1}}{(n-1)!}.\]
This gives the Claim $3$.\\
\end{proof}
\begin{proof}[Proof of Lemma~\ref{Lm: estimate}]
We simply add up the upper bounds for $\Sm_0,\Sm_>,\Sm_\pm$ to get an upper bound for $2\Sm_\pm+\Sm_>+\Sm_0$. This proves Lemma~\ref{Lm: estimate}.
\end{proof}
\section{Elements on majorant estimates}\label{section: majorant}
In this appendix, we collect some basics about the majorant relation. The materials can be found in the Chapter 5 of \cite{TZ}. The majorant relation $``\ll"$ is defined in Definition~\ref{Def: maj1}.
\begin{Lm}
The relation ``$\ll$" satisfies the following properties:
\begin{enumerate}
\item If $f_1 \ll g_1$ and $f_2 \ll g_2$, then $f_1 + f_2 \ll g_1 + g_2$ and $f_1f_2\ll g_1g_2$.
\item If $f \ll g$, then $\dfrac{\pt f}{\pt z_j} \ll \dfrac{\pt g}{\pt z_j}$ for any $j = 1,\cdots,m$.
\item If $f(z,\lambda)\ll g(z,\lambda)$ for any value of the parameter $\lambda\in[a, b]$, then
\[\int_a^b f(z,\lambda)\ d\lambda\ll \int_a^b g(z,\lambda)\ d\lambda\]
\item Let $|f(z)| \leq c$ in the domain
$\{z = (z_1,\cdots, z_m): |z|\leq b, j = 1,\cdots,m\}$.
Then $f(z)\ll c/w\ll \dfrac{bc}{b-z}$, where $w = b^{-m}(b-z_1)\cdots(b-z_m)$ and $z=z_1+z_2+\cdots+z_m.$
\end{enumerate}\label{Lm: digression}
\end{Lm}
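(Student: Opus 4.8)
The strategy is to reduce each of the four assertions to the definition of the majorant relation (Definition~\ref{Def: maj1}), which is a pure statement about Taylor coefficients: once the coefficients of both sides are identified, parts (1)--(3) follow from the triangle inequality and part (4) from the Cauchy estimate together with two elementary power-series expansions. Throughout write $f=\sum_\bt f_\bt z^\bt$, $g=\sum_\bt g_\bt z^\bt$, and recall that if $g$ majorates $f$ then in particular every $g_\bt\geq|f_\bt|\geq0$.

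For part (1): the $z^\bt$-coefficient of $f_1+f_2$ is $(f_1)_\bt+(f_2)_\bt$, and $|(f_1)_\bt+(f_2)_\bt|\leq|(f_1)_\bt|+|(f_2)_\bt|\leq(g_1)_\bt+(g_2)_\bt$; for the product, the $z^\bt$-coefficient of $f_1f_2$ is the finite convolution $\sum_{\al\leq\bt}(f_1)_\al(f_2)_{\bt-\al}$ (the sum over multi-indices $\al\leq\bt$ componentwise), whose modulus is bounded using $f_i\ll g_i$ by $\sum_{\al\leq\bt}(g_1)_\al(g_2)_{\bt-\al}$, i.e.\ exactly the $z^\bt$-coefficient of $g_1g_2$ (no cancellation occurs since the $g_i$-coefficients are nonnegative). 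Part (2) follows because the $z^\gm$-coefficient of $\pt f/\pt z_j$ is $(\gm_j+1)f_{\gm+e_j}$, whence $|(\gm_j+1)f_{\gm+e_j}|\leq(\gm_j+1)g_{\gm+e_j}$, the corresponding coefficient of $\pt g/\pt z_j$. Part (3) follows because the $z^\bt$-coefficient of $\int_a^b f(\cdot,\lb)\,d\lb$ is $\int_a^b f_\bt(\lb)\,d\lb$, whose modulus is at most $\int_a^b|f_\bt(\lb)|\,d\lb\leq\int_a^b g_\bt(\lb)\,d\lb$, using $g_\bt(\lb)\geq0$.

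Part (4) is the only one requiring genuine (if routine) work, and is where I expect the main obstacle to lie: keeping the multi-index bookkeeping straight in three expansions and being careful about the analyticity hypothesis. First I would prove $f\ll c/w$: the multidimensional Cauchy formula $f_\bt=(2\pi i)^{-m}\oint\!\cdots\!\oint f(\zeta)\,\zeta_1^{-\bt_1-1}\cdots\zeta_m^{-\bt_m-1}\,d\zeta$ over the torus $|\zeta_j|=b$, combined with $|f|\leq c$ on the closed polydisc $\{|z_j|\leq b\}$, gives $|f_\bt|\leq c\,b^{-|\bt|}$; on the other hand the geometric expansion $c/w=c\,b^m\prod_j(b-z_j)^{-1}=\sum_\bt c\,b^{-|\bt|}z^\bt$ shows $c\,b^{-|\bt|}$ is the $z^\bt$-coefficient of $c/w$, so indeed $f\ll c/w$. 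Next, for $c/w\ll bc/(b-z)$ I would compare coefficients directly: expanding $b/(b-z)=\sum_{k\geq0}b^{-k}(z_1+\cdots+z_m)^k$ and applying the multinomial theorem, the $z^\bt$-coefficient of $bc/(b-z)$ equals $c\,b^{-|\bt|}\binom{|\bt|}{\bt_1,\dots,\bt_m}\geq c\,b^{-|\bt|}$, which dominates the $z^\bt$-coefficient of $c/w$ found above. Since $\ll$ is transitive (immediate from the definition, as $h_\bt\geq g_\bt\geq|f_\bt|$), the two majorations chain to give $f\ll c/w\ll bc/(b-z)$. The only subtlety worth flagging is that the Cauchy estimate is applied on a neighborhood of the closed polydisc, which is precisely the standing hypothesis; everything else is a direct consequence of Definition~\ref{Def: maj1} and the triangle inequality.
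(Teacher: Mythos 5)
Your proof is correct. Note that the paper itself does not prove this lemma at all: it is quoted verbatim from Chapter 5 of the Treschev--Zubelevich book \cite{TZ}, so there is no in-paper argument to compare against; your coefficientwise verification (triangle inequality and Cauchy products for (1)--(3), Cauchy estimates together with the geometric and multinomial expansions of $c/w$ and $bc/(b-z)$ for (4)) is the standard route and fills in exactly what the paper delegates to the reference. One small refinement to your final remark: the hypothesis of (4) only gives $|f|\leq c$ on the closed polydisc, not analyticity on a neighborhood of it, so one should apply the Cauchy formula on tori of radius $r<b$ and let $r\to b$ to get $|f_\bt|\leq c\,b^{-|\bt|}$; this does not affect the conclusion.
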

Moreover, the majorant relation is also preserved by solving differential equations or integral equations.
\begin{Def}
Consider an ODE system
\begin{equation}
 f^k_\dt(z,\dt) = F^k(f(z,\dt), z,\dt),\quad  f^k(z,0) = \hat {f}^k(z)\label{eq: majorated}
 \end{equation}
with some known functional $F^k$ and initial data $\hat{f}^k$.
We call a system
 \begin{equation}
 \mathbf{f}^k_\dt(z,\dt)= \mathbf{F}^k(\mathbf{f}(z,\dt),z,\dt),\quad \mathbf{f}^k(z, 0)=\hat{\mathbf{f}}^k(z)\label{eq: majorating}
 \end{equation}
a majorant system associated with equation~$(\ref{eq: majorated})$ if\\
 $(a)$ $\hat{f}^k(z)\ll \hat{\mathbf{f}}^k(z)$ for any $k\in\Z$, and\\
 $(b)$ $F^k(g(z), z,\dt)\ll \mathbf{F}^k(\mathbf{g}(z), z,\dt)$ for any $k \in \Z$, $\dt\geq 0$, and $g, \mathbf{g}$ such that $g \ll \mathbf{g}$.\\
\label{Def: maj}
\end{Def}
We have the theorem
\begin{Thm}[Chapter 5 of \cite{TZ}]
If $\mathbf{f}(z,\dt), 0\leq\dt\leq\dt_0$ is a
solution of the majorant system~$(\ref{eq: majorating})$ associated with~$(\ref{eq: majorated})$, then the system~$(\ref{eq: majorated})$
has a solution and
$f^k(z,\dt)\ll \mathbf{f}^k(z,\dt)$ for any $\dt\in [0, \dt_0]$, $k\in \Z$.
The same is true if we rewrite systems~$(\ref{eq: majorated})~(\ref{eq: majorating})$ in the integral form:
\begin{equation}
\begin{aligned}
&f^k(z,\dt) = \hat{f}^k(z)+\int_0^\dt F^k(f(z,s),z,s)\ ds,\\
&\mathbf{f}^k(z,\dt)=\hat{\mathbf{f}}^k(z)+\int_0^\dt\mathbf{F}^k({\mathbf{f}(z,s)},z,s)\ ds,
\end{aligned}
\end{equation}
\label{Thm: Z}
\end{Thm}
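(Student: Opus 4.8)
The statement is the classical Cauchy--Kovalevskaya-type majorant comparison principle, so the plan is to construct the solution of the original system as a formal power series and dominate it coefficient by coefficient by the given solution $\mathbf{f}$ of the majorant system. First I would pass to the integral form: a $C^1$ solution of~$(\ref{eq: majorated})$ on $[0,\dt_0]$ is the same as a continuous solution of the Volterra equation $f^k(z,\dt)=\hat{f}^k(z)+\int_0^\dt F^k(f(z,s),z,s)\,ds$, and likewise for~$(\ref{eq: majorating})$, so it suffices to treat the integral form, which I do from now on. Expanding all data in Taylor series at $z=0$ and inserting the ansatz $f^k(z,\dt)=\sum_{p\ge 0}\sum_\beta a^k_{p,\beta}z^\beta\dt^p$, matching the coefficient of $z^\beta\dt^p$ expresses $a^k_{p,\beta}$ as a universal polynomial --- with nonnegative rational coefficients, the factor $1/p$ coming from integrating $\dt^{p-1}$ --- in the Taylor coefficients of $F^k$ and in the $a^{k'}_{p',\beta'}$ with $p'<p$ (those with $p'=0$ being the prescribed $\hat{f}^{k'}_{\beta'}$). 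Since $\dt$-integration raises the $\dt$-degree by one, this recursion is triangular in $p$, hence has a unique formal solution; the same recursion with $F^k$ replaced by $\mathbf{F}^k$ has a unique formal solution, which must be the Taylor series of the given analytic $\mathbf{f}$. (The possibly infinite sums over $k'$ entering each order converge, since they do so for the majorant system and the original data is dominated by it.)

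Next I would prove $f^k\ll\mathbf{f}^k$, i.e. $|a^k_{p,\beta}|\le\mathbf{a}^k_{p,\beta}$, by induction on the $\dt$-degree $p$. For $p=0$ this is hypothesis $(a)$. Assuming it for all $\dt$-degrees $<p$, the truncations $g:=\sum_{p'<p}\sum_\beta a_{p',\beta}z^\beta\dt^{p'}$ and $\mathbf{g}:=\sum_{p'<p}\sum_\beta \mathbf{a}_{p',\beta}z^\beta\dt^{p'}$ (all components $k$) satisfy $g\ll\mathbf{g}$; hypothesis $(b)$ then gives $F^k(g,z,\dt)\ll\mathbf{F}^k(\mathbf{g},z,\dt)$, integration in $\dt$ preserves $\ll$ by Lemma~\ref{Lm: digression}(3), and adding $\hat{f}^k\ll\hat{\mathbf{f}}^k$ preserves it by Lemma~\ref{Lm: digression}(1); reading off the coefficient of $z^\beta\dt^p$ on both sides closes the induction. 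It is exactly here that one needs $(b)$ postulated for \emph{all} majorant pairs $g\ll\mathbf{g}$, not merely for the solutions, so that it can be invoked at every truncation.

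Finally, since the given $\mathbf{f}^k$ is analytic on a polydisc $\{|z_j|<b\}\times\{|\dt|<\dt_0\}$, its (nonnegative) Taylor coefficients obey Cauchy estimates; by the previous step the coefficients of $f^k$ satisfy the same bounds, so the series for $f^k$ converges on that polydisc to an analytic function with $f^k\ll\mathbf{f}^k$, and, the coefficient recursion being precisely the coefficient form of the integral equation, this function solves~$(\ref{eq: majorated})$ in integral and hence, upon differentiating in $\dt$, in differential form. The main obstacle is essentially organizational: arranging the Taylor recursion so that it is genuinely triangular in the $\dt$-degree, and carrying the coefficientwise induction through using nothing beyond $(a)$, $(b)$, and the stability of $\ll$ under finite sums and $\dt$-integration --- once this is set up, the Cauchy estimates and the term-by-term verification that the limit solves the equation are routine.
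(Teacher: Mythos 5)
The paper does not actually prove Theorem~\ref{Thm: Z}; it is quoted from Chapter 5 of \cite{TZ}, so there is no internal proof to compare against, and your argument has to be judged on its own terms. Judged that way, it has a genuine gap: your entire construction --- the ansatz $f^k=\sum_{p,\beta}a^k_{p,\beta}z^\beta\dt^p$, the ``triangular in $p$'' recursion, the identification of the given $\mathbf{f}^k$ with its $\dt$-Taylor series, and the Cauchy estimates on a polydisc in $(z,\dt)$ --- presupposes that $F^k$, $\mathbf{F}^k$ and $\mathbf{f}$ are analytic in $\dt$. The theorem assumes nothing of the sort: $\dt$ enters only as a parameter, and the majorant relation $f^k(z,\dt)\ll\mathbf{f}^k(z,\dt)$ is meant coefficientwise in $z$ for each fixed $\dt$. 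This is not a cosmetic loss of generality, because in the one place the theorem is used in this paper the $\dt$-dependence is not analytic: in equations~(\ref{eq: intv}) the right-hand sides involve $\sm_{l_\pm}(\dt)$, $S_k(\dt)$ and the shrinking sets $D_\pm(\dt)$ of Definition~\ref{Def: main def}, which are only piecewise continuous in $\dt$ (the functions $\sm_k(\dt)$ jump between $\pm1$ and $0$ as modes leave $D_\pm(\dt)$), and the majorant coefficient $a(\dt)$ of Definition~\ref{def: w} is merely continuous. A proof by Taylor expansion in $\dt$ therefore cannot be invoked where the theorem is applied; indeed the sentence immediately after the theorem --- that the integral form lets one ``treat $\dt$ as a parameter instead of a variable'' so that no Taylor expansion w.r.t.\ $\dt$ is needed --- points in exactly the opposite direction from your strategy. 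There is also a smaller internal wobble: in your induction you pass back and forth between reading $\ll$ jointly in $(z,\dt)$ and in $z$ alone, while hypothesis $(b)$ is only postulated for the latter.

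The route compatible with the stated hypotheses is successive approximation on the integral equations: set $f^k_{(0)}=\hat f^k$, $f^k_{(j+1)}=\hat f^k+\int_0^\dt F^k(f_{(j)},z,s)\,ds$, and similarly for the bold system, and prove by induction on $j$ (using $(a)$, $(b)$, and Lemma~\ref{Lm: digression}(1),(3), with $\ll$ taken in the $z$-variables and $s,\dt$ as parameters) that $f^k_{(j)}(\cdot,\dt)\ll\mathbf f^k_{(j)}(\cdot,\dt)\ll\mathbf f^k(\cdot,\dt)$ for all $\dt\in[0,\dt_0]$. The coefficientwise domination by the convergent solution $\mathbf f^k$ then gives (absolute, uniform on suitable subdomains) convergence of the original iterates, the limit solves~(\ref{eq: majorated}) in integral form and, when the data are continuous in $\dt$, in differential form, and the majorant inequality passes to the limit. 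Your coefficient recursion in $z$ alone could be made to work along the same lines, but the expansion in powers of $\dt$ has to be abandoned for the theorem as stated and as used.
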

With this theorem, we treat $\dt$ as a parameter instead of a variable. So we do not need to do the Taylor expansion w.r.t. $\dt$.

\section*{Acknowledgement}
The author would like to thank Prof. Vadim Kaloshin to provide the idea of applying continuous averaging to the Nekhoroshev theorem. He also would like to thank Prof. Treschev, Neishtadt, Dr. M. Guardia and A. Bounemoura for carefully reading the manuscript and giving many valuable suggestions. The last version of the paper is completed when the author is visiting IAS and he would like to thank IAS for its hospitality.

\end{document}